\numberwithin{equation}{section}
\newtheorem{theorem}{Theorem}[section]
\newtheorem{lemma}[theorem]{Lemma}
\newtheorem*{theorem*}{Conjecture}
\newtheoremstyle{remarkstyle}
{}{}{}{ }{\bfseries}{.}{ }{\thmname{#1}\thmnumber{ #2}\thmnote{ (#3)}}
\theoremstyle{remarkstyle}
\newtheorem{remark}{Remark}[section]
\newcommand{\R}{\mathbb R}
\newcommand{\C}{\mathbb C}
\newcommand{\Ac}{\mathcal A}
\newcommand{\Bc}{\mathcal B}
\newcommand{\Mcal}{\mathcal M}
\newcommand{\vareps}{\varepsilon}
\DeclareMathOperator*{\opt}{opt}
\DeclareMathOperator*{\GN}{GN}
\DeclareMathOperator*{\Sob}{Sob}
\DeclareMathOperator*{\rea}{Re}
\DeclareMathOperator*{\ima}{Im}
\DeclareMathOperator*{\gamc}{{\gamma_c}}
\DeclareMathOperator*{\cb}{{\boldsymbol{c}}}
\title[Blow-up for System NLS]
{Blow-up results for systems of nonlinear Schr\"odinger equations with quadratic interaction}
\author[V. D. Dinh]{Van Duong Dinh}
\address[V. D. Dinh]{Laboratoire Paul Painlev\'e UMR 8524, Universit\'e de Lille CNRS, 59655 Villeneuve d'Ascq Cedex, France
and 
Department of Mathematics, HCMC University of Pedagogy, 280 An Duong Vuong, Ho Chi Minh, Vietnam}
\email{contact@duongdinh.com}
\author[L. Forcella]{Luigi Forcella}
\address[L. Forcella]{Department of Mathematics, Heriot-Watt University and The Maxwell Institute for the Mathematical Sciences, Edinburgh, EH14 4AS, United Kingdom}
\email{l.forcella@hw.ac.uk}
\subjclass[2010]{35B44; 35Q55}
\keywords{Nonlinear Schr\"odinger systems, Quadratic-type interactions, Blow-up, Grow-up}
\begin{document}
	
	\begin{abstract}
		We establish blow-up results for systems of NLS equations with quadratic interaction in anisotropic spaces. We precisely show finite time blow-up or grow-up for cylindrical symmetric solutions. With our construction, we moreover prove some polynomial lower bounds on the kinetic energy of global solutions in the mass-critical case, which in turn implies grow-up along any diverging time sequence. Our analysis extends to general NLS systems with quadratic interactions, and it also provides improvements of known results in the radial case. 
	\end{abstract}
	
	\maketitle

	\section{Introduction}
	\label{S1}
	\setcounter{equation}{0}
	
In this paper, we investigate  the existence of blowing-up solutions for the Cauchy problem for the following system of nonlinear Schr\"odinger equations with quadratic interaction
	\begin{align} \label{QNLS}
	\left\{
	\renewcommand*{\arraystretch}{1.2}
	\begin{array}{rcl}
	i\partial_t u + \frac{1}{2m}\Delta u &=& \lambda v \overline{u}, \\
	i \partial_t v + \frac{1}{2M} \Delta v &=& \mu u^2, 
	\end{array}
	\right.	
	\end{align}
	where the wave functions $u,v: \R \times \R^d \rightarrow \C$ are complex scalar functions, the parameters $m, M$ are two real positive quantities, and $\lambda, \mu \in \C$ are two complex coupling constants.  

Multi-components systems of nonlinear Schr\"odinger equations with quadratic-type interactions appear in the processes of waves propagation in quadratic media. They model, for example, the Raman amplification phenomena in a plasma, or they are used to describe other phenomena in nonlinear optics. We refer the readers to \cite{CCO, CdMS, Kiv, KS} for more insights on these kind of physical models.

In the case of the so-called mass-resonance condition, namely provided that the condition
 	\begin{align} \label{mass-reso}
	M=2m
	\end{align} 
is satisfied, the system \eqref{QNLS} can be viewed, see \cite{HOT}, as a non-relativistic limit of the following system of nonlinear Klein-Gordon equations
	\[
	\left\{
	\renewcommand*{\arraystretch}{1.2}
	\begin{array}{rcl}
	\frac{1}{2c^2 m} \partial^2_t u -\frac{1}{2m} \Delta u + \frac{mc^2}{2} u &=& -\lambda v \overline{u}, \\
	\frac{1}{2c^2 M} \partial^2_t v -\frac{1}{2M} \Delta v +\frac{Mc^2}{2} v &=& - \mu u^2, \\
	\end{array}
	\right.	
	\]
as the speed of light $c$ tends to infinity. \\

	To the best of our knowledge, the first mathematical study of the system \eqref{QNLS} is due to Hayashi, Ozawa, and Tanaka \cite{HOT}, where, among other things, they established the local well-posedness of the system \eqref{QNLS}, and they proved that, in order to ensure the conservation law of the total charge, namely the sum (up to some constant) of the $L^2$ norm of $u$ and $v,$ it is natural to consider the condition 
	\begin{align} \label{cond-lambda-mu}
	\exists\, c \in \R \backslash \{0\} \ \hbox{ such that } \ \lambda = c \overline{\mu}.
	\end{align}
	
\noindent	Moreover, if we assume that $\lambda, \mu$ satisfy \eqref{cond-lambda-mu} for some $c>0$ and $\lambda, \mu \ne 0,$ by the change of variable
	\[
	\tilde{u}(t,x) = \sqrt{\frac{c}{2}} |\mu| u \left(t, \sqrt{\frac{1}{2m}} x\right), \quad \tilde{v}(t,x) = -\frac{\lambda}{2} v \left(t,\sqrt{\frac{1}{2m}} x\right), 
	\]
	the system \eqref{QNLS} can be written (by dropping the tildes) as
	 \begin{align} \label{SNLS}
	 \left\{
	 \begin{array}{rcl}
	 i\partial_t u + \Delta u &=& - 2 v \overline{u}, \\
	 i \partial_t v + \kappa \Delta v &=& - u^2,
	 \end{array}
	 \right.	
	 \end{align}
	 where $\kappa =\frac{m}{M}$ is the mass ratio. Note that $\kappa=\frac{1}{2}$ in the mass-resonance case \eqref{mass-reso}. The system \eqref{SNLS} satisfies the conservation of mass and energy defined respectively by
	 \begin{align*} 
	  M(u(t),v(t)) &= \|u(t)\|^2_{L^2} + 2 \|v(t)\|^2_{L^2}, \\
	 E(u(t),v(t)) &= \frac{1}{2} \|\nabla u(t)\|^2_{L^2} + \frac{\kappa}{2} \|\nabla v(t)\|^2_{L^2} -  \rea \int v(t) \overline{u}^2(t) dx.
	 \end{align*}
	For the purpose of our paper, we define the \emph{kinetic} energy 
	 \begin{equation}\label{defi-K}
	 T(f,g) := \|\nabla f\|^2_{L^2} + \kappa \|\nabla g\|^2_{L^2},
	 \end{equation}
	 and the \emph{potential} energy by
	 \begin{equation}\label{defi-N}
	 P(f,g) := \rea \int g \overline{f}^2 dx,
	 \end{equation}
hence we rewrite the total energy as
\[
E(u(t),v(t))= \frac12 T(u(t),v(t))- P(u(t),v(t)).
\] 

\noindent We also introduce the following functional defined in terms of $T$ and $P:$
\begin{align} \label{defi-G}
	G(f,g):= T(f,g) - \frac{d}{2} P(f,g).
	\end{align}
Even if the we will use $G$ evaluated at time-dependent solutions, it is worth mentioning that $G$ is the Pohozaev functional which is strictly related to the time-independent elliptic equations \eqref{ground-1} and \eqref{ground-2} below.

Another crucial property of \eqref{SNLS} is that \eqref{SNLS} is invariant under the scaling
	 	\begin{align} \label{scaling}
	 	u_\lambda(t,x):= \lambda^2 u(\lambda^2 t, \lambda x), \quad v_\lambda(t,x):= \lambda^2 v(\lambda^2 t, \lambda x), \quad \lambda>0.
	 	\end{align}
A direct computation gives
	 	\begin{align*}
	 	\|u_\lambda(0)\|_{\dot{H}^\gamma} = \lambda^{\gamma-\frac{d}{2} +2} \|u(0)\|_{\dot{H}^\gamma}, \quad \|v_\lambda(0)\|_{\dot{H}^\gamma} = \lambda^{\gamma-\frac{d}{2} +2} \|v(0)\|_{\dot{H}^\gamma}.
	 	\end{align*}
	 	This shows that \eqref{scaling} leaves the $\dot{H}^{\gamc}$-norm of initial data invariant, where
	 	\[
	 	\gamc:= \frac{d}{2}-2.
	 	\]
	 	According to the conservation laws of mass and energy, \eqref{SNLS} is called mass-critical, mass and energy intercritical (or intercritical for short), and energy-critical if $d=4$, $d=5$, and $d=6$, respectively.\\

In the present paper, we restrict our attention to the dimensions $d=4,5,6,$ and we are interested in showing the formation of singularities in finite or infinite time for solutions to the initial value problem associated to \eqref{SNLS}, with initial data 
\[
(u,v)(0,\cdot)=:(u_0,v_0)\in H^1(\R^d)\times H^1(\R^d).
\] 
As well-known, the existence of blowing-up solutions to the Schr\"odinger-type equations is closely related to the notion of standing wave or static (in the energy-critical case) solutions. Therefore, before stating our main results, we recall some basic facts about the existence of ground states for \eqref{SNLS}.\\

First of all, we recall that by standing waves solutions we mean solutions to \eqref{SNLS} of the form 
\[
	(u(t,x),v(t,x)) = (e^{it} \phi(x), e^{2it} \psi(x)),
	\]
	where $\phi, \psi$ are real-valued functions satisfying
	\begin{align} \label{ground-1}
	\left\{
	\begin{array}{ccl}
	- \Delta \phi + \phi &=&  2 \phi \psi, \\
	- \kappa \Delta \psi + 2\psi&=& \phi^2.
	\end{array}
	\right.	
	\end{align}

In \cite{HOT}, Hayashi, Ozawa, and Tanaka showed the existence of ground states related to \eqref{ground-1}, i.e. non-trivial solutions to \eqref{ground-1} that minimizes the action functional
	\[
	S(f,g):= E(f,g) +\frac{1}{2} M(f,g)
	\]
	over all non-trivial solutions to \eqref{ground-1}. It is worth mentioning that this existence result holds whenever $d\leq5,$ and not only for $d=4,5.$ When $d=6$, i.e. the energy-critical case, \eqref{SNLS} admits a static solution of the form
	\[
	(u(t,x),v(t,x)) = (\phi(x), \psi(x)),
	\]
	where $\phi,\psi$ are real-valued functions satisfying
	\begin{align} \label{ground-2}
	\left\{
	\begin{array}{ccl}
	- \Delta \phi  &=&  2 \phi \psi, \\
	- \kappa \Delta \psi &=& \phi^2.
	\end{array}
	\right.	
	\end{align}
	The existence of ground states related to \eqref{ground-2} was shown in \cite{HOT} (see also \cite[Section 3]{NP-blow}). Here by a ground state related to \eqref{ground-2}, we mean a non-trivial solution to \eqref{ground-2} that minimizes the energy functional over all non-trivial solutions of \eqref{ground-2}.

	\section{Main results}\label{S-Main} 
	 \label{S2}
	 \setcounter{equation}{0}
	 We are now ready to state our first result  about the blow-up of solutions in the mass and energy intercritical case in anisotropic spaces. To this aim, we introduce some notation. Denote
	 \begin{align} \label{Sigma-d}
	 		\Sigma_d:= \left\{ f \in H^1 (\R^d) \ \hbox{ s.t. } f(y,x_d) = f(|y|,x_d), ~ x_d f \in L^2(\R^d)\right\},
	 		\end{align}
where $x=(y,x_d), y=(x_1, \dots, x_{d-1}) \in \R^{d-1}$, and $x_d \in \R$. Here $\Sigma_d$ stands for the space of cylindrical symmetric functions with finite variance in the last direction. We also introduce the following blow-up conditions:
\begin{align} \label{cond-blow}\tag{BC$_{5d}$}
	E(u_0,v_0) M(u_0,v_0) < E(\phi,\psi) M(\phi,\psi) \quad \& \quad T(u_0,v_0) M(u_0,v_0) > T(\phi,\psi) M(\phi,\psi).
	\end{align}
As for the usual Schr\"odinger equation, the conditions expressed in \eqref{cond-blow} are the counterpart of  the conditions
\begin{align} \label{cond-gwp}\tag{SC$_{5d}$}
	E(u_0,v_0) M(u_0,v_0) < E(\phi,\psi) M(\phi,\psi) \quad \& \quad T(u_0,v_0) M(u_0,v_0) < T(\phi,\psi) M(\phi,\psi),
	\end{align} 
in the dichotomy leading to global well-posedness \& scattering (\eqref{cond-gwp}) or blow-up (\eqref{cond-blow}). In the energy critical case, the previous conditions in \eqref{cond-blow} will be replaced by analogous inequalities, see \eqref{cond-blow-6d} below. Since in this paper we are concerned only with the blow-up dynamics of solutions to \eqref{SNLS}, we will not use the modified conditions for the scattering theory.

\subsection{Intercritical case} 
Our first result concerns a finite time blow-up for \eqref{SNLS} in the intercritical case $d=5$.
	 \begin{theorem} \label{theo-blow-5d}
	 	Let $d=5$, $\kappa >0$, and $(\phi,\psi)$ be a ground state related to \eqref{ground-1}. Let $(u_0,v_0) \in \Sigma_5 \times \Sigma_5$ satisfy  \eqref{cond-blow}.  Then the corresponding solution to \eqref{SNLS} blows-up in finite time.
	 \end{theorem}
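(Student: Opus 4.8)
The plan is to run a localised virial (Glassey--Ogawa--Tsutsumi) argument, tuned to the cylindrical symmetry so that the exact variance in the $x_d$-direction is kept and only the $y$-variables are truncated. It divides into a variational/coercivity step followed by a convexity step, and the genuinely novel work is in controlling the truncation errors.

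\textbf{Step 1 (variational trapping).} First I would recast \eqref{cond-blow} in scaling-invariant form. Testing the elliptic system \eqref{ground-1} against $(\phi,\psi)$ and against the dilation field $x\cdot\nabla$ produces the Nehari identity $T(\phi,\psi)+M(\phi,\psi)=3P(\phi,\psi)$ and the Pohozaev identity $G(\phi,\psi)=0$, i.e.\ $T(\phi,\psi)=\tfrac52 P(\phi,\psi)$; combining them forces $E(\phi,\psi)M(\phi,\psi)=\tfrac1{10}T(\phi,\psi)M(\phi,\psi)$ and identifies the sharp constant in the Gagliardo--Nirenberg-type inequality $P(f,g)\le C_{\mathrm{GN}}\,T(f,g)^{5/4}M(f,g)^{1/4}$ as $C_{\mathrm{GN}}=\tfrac25\bigl(T(\phi,\psi)M(\phi,\psi)\bigr)^{-1/4}$. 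Since in $d=5$ one has the algebraic identity $G(f,g)=-\tfrac14 T(f,g)+\tfrac52 E(f,g)$, the conservation of mass and energy together with the sharp inequality and \eqref{cond-blow} show, by a continuity argument in $t$, that $T(u(t),v(t))\,M(u_0,v_0)$ can never equal $T(\phi,\psi)M(\phi,\psi)$ and hence stays strictly above it on the whole maximal interval; in particular there is $\delta>0$ with
\[
G(u(t),v(t))\le-\delta\qquad\text{for all }t\text{ in the maximal interval.}
\]

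\textbf{Step 2 (localised virial identity).} Choose a radial profile $\chi$ with $\chi(r)=r^2$ for $r\le1$, $\chi$ constant for $r\ge2$, selected so that $\nabla^2_y\bigl(R^2\chi(|y|/R)\bigr)\le2\,\mathrm{Id}$, and set $\phi_R(x):=x_d^2+R^2\chi(|y|/R)$; then $\phi_R\ge0$, $\phi_R=|x|^2$ on $\{|y|\le R\}$, and $\nabla^2\phi_R\le2\,\mathrm{Id}$ with equality in the $x_d$-direction. Let $\mathcal V_R(t)$ be the corresponding virial functional, of the type $\int\phi_R\bigl(|u(t)|^2+2|v(t)|^2\bigr)\,dx$ (the weight on the $v$-component has to be adjusted to the mass ratio when $\kappa\neq\tfrac12$ so that the identity below closes). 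Differentiating twice along \eqref{SNLS} gives $\mathcal V_R''(t)=8\,G(u(t),v(t))+\mathrm{Err}_R(t)$, where $\mathrm{Err}_R(t)$ is a finite sum of terms supported in $\{|y|\ge R\}$: quadratic gradient terms carrying the nonnegative factor $2\,\mathrm{Id}-\nabla^2\phi_R$, lower-order terms with the factor $\Delta^2\phi_R=O(R^{-2})$, and a nonlinear remainder dominated by $\int_{|y|\ge R}|v|\,|u|^2\,dx$. One must also check that $\mathcal V_R$ is genuinely $C^2$ despite the unbounded weight $x_d^2$, i.e.\ that the $\Sigma_5$-structure is propagated by the flow; this is handled by a regularisation argument, or by also truncating $x_d^2$, running the estimates, and letting the truncation go to infinity.

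\textbf{Step 3 (error control and conclusion).} The gain is the radial symmetry in $y\in\R^4$: for $f$ radial in $y$ one has the cylindrical Sobolev inequality $\sup_{|y|\ge R}\|f(y,\cdot)\|_{L^2_{x_d}}\lesssim R^{-3/2}\|f\|_{L^2}^{1/2}\|\nabla_y f\|_{L^2}^{1/2}$, which, combined with a one-dimensional Sobolev estimate in $x_d$, bounds the nonlinear part of $\mathrm{Err}_R$ by $C\,R^{-\sigma}\,T(u(t),v(t))^{\theta}\,M(u_0,v_0)^{1-\theta}$ for some $\sigma>0$ and some $\theta<1$; the quadratic gradient terms are nonpositive by the choice $\nabla^2\phi_R\le2\,\mathrm{Id}$, and the rest is $O(R^{-2})$ times conserved quantities. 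The sub-quadratic ($\theta<1$) dependence on the kinetic energy is precisely what the intercritical dimension $d=5$ affords, and using $T(u(t),v(t))=-4\,G(u(t),v(t))+10\,E(u_0,v_0)$ it lets one absorb $\mathrm{Err}_R$ into $8\,G(u(t),v(t))\le-8\delta$: for $R$ large enough,
\[
\mathcal V_R''(t)\le 8\,G(u(t),v(t))+\mathrm{Err}_R(t)\le-4\delta<0
\]
on the whole maximal interval; since $\mathcal V_R(t)\ge0$, a standard convexity argument then forces that interval to be finite, which is the claimed finite-time blow-up. The hard part, I expect, is exactly this error estimate: squeezing out of the $y$-truncation only terms that are sub-quadratic in $T(u(t),v(t))$, which is what couples the argument to the cylindrical symmetry and to the exponent $d=5$; a secondary technical nuisance is the $C^2$-regularity/propagation of $\mathcal V_R$ flagged in Step 2.
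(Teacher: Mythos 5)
There is a genuine gap in Step 2, and it sits exactly at the central difficulty of this theorem. Your argument is a Glassey-type convexity argument: you differentiate a nonnegative weighted variance $\mathcal V_R(t)=\int\phi_R\bigl(|u|^2+a|v|^2\bigr)\,dx$ twice and use $\mathcal V_R''=8G+\mathrm{Err}_R$ together with $\mathcal V_R\ge 0$. But for the system \eqref{SNLS} a direct computation gives
\[
\frac{d}{dt}\int\phi_R\bigl(|u|^2+a|v|^2\bigr)\,dx
=2\ima\int\nabla\phi_R\cdot\bigl(\nabla u\,\overline u+a\kappa\,\nabla v\,\overline v\bigr)\,dx+(2a-4)\ima\int\phi_R\, v\,\overline u^2\,dx ,
\]
so killing the nonlinear term forces $a=2$, while matching the momentum functional $M_{\phi_R}=2\ima\int\nabla\phi_R\cdot(\nabla u\,\overline u+\nabla v\,\overline v)\,dx$ (the only quantity whose time derivative is $8G$ plus localization errors) forces $a\kappa=1$. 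Both hold simultaneously only when $\kappa=\tfrac12$, i.e.\ under mass resonance. For general $\kappa>0$, which is what the theorem asserts, no choice of weight ``closes the identity'': either $\mathcal V_R'$ carries an uncontrolled term $\ima\int\phi_R v\overline u^2$, or $\mathcal V_R''$ is not $8G+\mathrm{Err}_R$ but contains extra non-sign-definite contributions proportional to $(1-2\kappa)$. This is precisely the known obstruction in the non-mass-resonance setting, and it is why the paper abandons convexity altogether: it works directly with the first derivative of $M_{\varphi_R}$ (which is \emph{not} the derivative of a nonnegative quantity when $\kappa\ne\tfrac12$), obtains $\frac{d}{dt}M_{\varphi_R}\le -4c-4\vareps\,T(u,v)$, combines this with $|M_{\varphi_R}|\lesssim\sqrt{T(u,v)}$ to get the Riccati inequality $M_{\varphi_R}(t)\le -A\int_{t_0}^t|M_{\varphi_R}(s)|^2\,ds$, and concludes finite-time blow-up from the resulting ODE.

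A second, related shortfall: your Step 1 only extracts $G(u(t),v(t))\le-\delta$. That would suffice for a convexity argument (if one were available), but it is too weak for the first-order ODE route that actually works here: that route needs $\frac{d}{dt}M_{\varphi_R}\lesssim -T(u(t),v(t))$ so as to dominate $|M_{\varphi_R}|^2\lesssim T$, and hence requires the stronger coercivity $G(u(t),v(t))+\vareps\,T(u(t),v(t))\le-c$ established in Lemma \ref{lem-uppe-boun-5d} via the refined bound \eqref{refi-est-solu-5d}. Your cylindrical error estimates in Step 3 (the radial Sobolev bound in $y\in\R^4$ giving an $R^{-3/2}$ gain and a sublinear-in-$T$ nonlinear remainder) are essentially the content of Lemma \ref{lem-loca-viri-est-cyli-1} and are fine; the proof cannot, however, be completed along the variance/convexity line you propose except in the special case $\kappa=\tfrac12$.
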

Let us give some comments on the previously known blow-up results for the system \eqref{SNLS}. The formation of singularities in finite time for negative energy and radial data was shown by Yoshida in \cite{Yoshida}, while for non-negative energy radial data a proof was recently given by Inui, Kishimoto, and Nishimura. Specifically, they proved in \cite{IKN-NA} the blow-up for radial initial data satisfying 
\begin{equation} \label{cond-IKN-5d}
E(u_0,v_0) M(u_0,v_0)< E(\phi,\psi) M(\phi,\psi) \quad\& \quad G(u_0,v_0) <0.
\end{equation}
By a variational characterization, we show in Lemma \ref{lem-equi-5d} that \eqref{cond-blow} and \eqref{cond-IKN-5d} are indeed equivalent. Thus a version of Theorem \ref{theo-blow-5d} for radial solutions  would be an interchangeable restatement of the result obtained in \cite{IKN-NA}. \\

Despite our approach relies on the classical virial identities, we need to precisely construct suitable cylindrical cut-off functions enabling us to get  enough decay (by means of some Sobolev embedding for partially radial functions) to close our estimates. With respect to the classical NLS equation, we will use an ODE argument instead of a concavity argument to prove our results, by only using the first derivative in time of suitable localized quantity, see Section \ref{S3}. We refer the reader to the early work of Martel \cite{Mar} in the context of the NLS equation in anisotropic spaces, and the more recent papers \cite{BF20, Inui1, Inui2}. See also our recent paper  \cite{AVDF} in the context of NLS system with cubic interaction.\\

For sake of completeness, we report now  known blow-up  and long time dynamics  results for  \eqref{SNLS} in the intercritical case.\\

If $\kappa =\frac{1}{2}$, Hayashi, Ozawa, and Tanaka in \cite{HOT} showed a blow-up result with negative energy and finite variance data, i.e. initial data belonging to $\Sigma\times\Sigma:=(H^1\times H^1)\cap (L^2(|x|^2\, dx) \times L^2(|x|^2\, dx))$. Hamano, see \cite{Hamano}, proved the scattering below the mass energy ground state. More precisely, he proved that if $(u_0, v_0) \in H^1 \times H^1$ satisfies \eqref{cond-gwp}  then the corresponding solution to \eqref{SNLS} exists globally in time and scatters in $H^1\times H^1$ in both directions, i.e. there exist $(u_\pm, v_\pm) \in H^1\times H^1$ such that
\[
\|(u(t),v(t)) - (e^{it\Delta} u_\pm, e^{i\kappa t\Delta} v_\pm)\|_{H^1\times H^1} \rightarrow 0
\]
as $t\rightarrow \pm \infty$. Here $e^{itc\Delta}$ denotes the classical Schr\"odinger free propagator. In addition, if $(u_0,v_0) \in H^1 \times H^1$ satisfies \eqref{cond-blow}, then the corresponding solution to \eqref{SNLS} either blows-up in finite time or there exists $|t_n|\rightarrow \infty$ such that $\|(u(t_n),v(t_n))\|_{H^1\times H^1} \rightarrow \infty$ as $n\rightarrow \infty$. Furthermore, if $(u_0,v_0) \in \Sigma \times \Sigma$ or $(u_0,v_0)$ is radial, then the solution blows-up in finite time. The first author, see \cite{Dinh-insta}, established the strong instability by blow-up for ground state standing waves of \eqref{SNLS}.
	 
If $\kappa \ne \frac{1}{2}$, Hamano, Inui, and Nishimura \cite{HIN} established the scattering for radial data below the mass-energy threshold. The proof is based on the concentration/compactness and rigidity scheme in the spirit of Kenig and Merle \cite{KM}. Wang and Yang \cite{WY}  extended the result of \cite{HIN} to the non-radial case provided that $\kappa$ belongs to a small neighbourhood  of $\frac{1}{2}$. Their proof made use of a recent method of Dodson and Murphy \cite{DM-MRL} using the interaction Morawetz inequality. Noguera and Pastor \cite{NP-DPDE} proved that if $(u_0, v_0) \in H^1 \times H^1$ satisfies \eqref{cond-gwp}, then the corresponding solution to \eqref{SNLS} exists globally in time. 
	 
\begin{remark}
From a pure mathematical perspective, distinguishing  the cases $\kappa=\frac12$ and $0<\kappa\neq\frac12$ plays a role in the virial identities related to \eqref{SNLS}. Under the mass-resonance condition, namely $\kappa=\frac12,$ some terms in the virial identities disappear, and the study of the dynamics of solutions is easier due to these cancellations. This is no more the case in the non-mass-resonance setting, i.e. when $\kappa\neq\frac12.$  We refer the reader to \cite[Introduction]{NP-blow} for an exhaustive list of references in which the effects of the mass and non-mass resonance conditions on the dynamics of solutions to systems similar to \eqref{SNLS} were studied.
\end{remark}

	 
\subsection{Energy-critical case}
Our next Theorem deals with a blow-up result in the energy-critical case $d=6.$
	 
	 \begin{theorem} \label{theo-blow-6d}
	 	Let $d=6$, $\kappa >0$, and $(\phi,\psi)$ be a ground state related to \eqref{ground-2}. Let $(u_0,v_0) \in  \Sigma_6 \times \Sigma_6$ satisfy 
			\begin{align}\tag{BC$_{6d}$}\label{cond-blow-6d}
	 	E(u_0,v_0) < E(\phi,\psi), \quad T(u_0,v_0)> T(\phi,\psi).
	 	\end{align} 
	 Then the corresponding solution to \eqref{SNLS} blows-up in finite time.
	 \end{theorem}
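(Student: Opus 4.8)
The plan is to derive finite-time blow-up from a localized virial (Morawetz-type) estimate, in the spirit of the proof of Theorem~\ref{theo-blow-5d}, but now fed by the \emph{energy-critical} variational analysis of the static equation \eqref{ground-2}. As a preliminary step I would record the Pohozaev relations for a ground state $(\phi,\psi)$ of \eqref{ground-2}: testing the two equations with $\phi$ and $\psi$ gives $\|\nabla\phi\|_{L^2}^2=2\kappa\|\nabla\psi\|_{L^2}^2$, hence $G(\phi,\psi)=0$, $P(\phi,\psi)=\tfrac13 T(\phi,\psi)$ and $E(\phi,\psi)=\tfrac16 T(\phi,\psi)$. I would also use the sharp Sobolev-type inequality associated with \eqref{SNLS} in $d=6$, whose optimizers are exactly the ground states of \eqref{ground-2} (see \cite{HOT} and \cite[Section~3]{NP-blow}), in the form $P(f,g)\le\tfrac13\,T(\phi,\psi)^{-1/2}\,T(f,g)^{3/2}$, which yields
\[
G(f,g)\ \ge\ T(f,g)\Bigl(1-\bigl(T(f,g)/T(\phi,\psi)\bigr)^{1/2}\Bigr),
\]
so that $G(f,g)<0$ forces $T(f,g)>T(\phi,\psi)$. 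Finally, conservation of energy and the definitions give the pointwise-in-time identity $G(u(t),v(t))=3E(u_0,v_0)-\tfrac12 T(u(t),v(t))$, and together with $E(\phi,\psi)=\tfrac16 T(\phi,\psi)$ the assumption \eqref{cond-blow-6d} already implies $G(u_0,v_0)<0$.

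Next I would show that the conditions are preserved along the flow. By continuity of $t\mapsto(T,G)(u(t),v(t))$, the identity above and the variational lower bound, if $T(u(t_0),v(t_0))=T(\phi,\psi)$ at a hypothetical first time $t_0$, then simultaneously $G(u(t_0),v(t_0))\ge0$ and $G(u(t_0),v(t_0))=3E(u_0,v_0)-3E(\phi,\psi)<0$, a contradiction; hence $T(u(t),v(t))>T(\phi,\psi)$ and $G(u(t),v(t))<0$ on the whole maximal interval $I$. Feeding $T(u(t),v(t))>T(\phi,\psi)=6E(\phi,\psi)>6E(u_0,v_0)$ back into the identity then yields a constant $\delta=\delta(u_0,v_0,\phi,\psi)>0$ with
\[
G(u(t),v(t))\ \le\ -\,\delta\,T(u(t),v(t)),\qquad t\in I.
\]
Controlling $G$ by a negative \emph{multiple} of $T$ (rather than by a mere negative constant) is exactly what will let me absorb the localization errors below.

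For the virial argument, since functions in $\Sigma_6$ only have finite variance in $x_d$, I would take a weight that is genuinely quadratic in $x_d$ and truncated at scale $R$ in $y\in\R^{5}$: fix $\theta\in C^\infty([0,\infty))$ with $\theta(s)=s^2$ for $s\le1$, $\theta$ bounded and $\theta''\le2$ (so the Hessian of $y\mapsto\theta(|y|)$ is $\le2\,\mathrm{Id}$), and set $\varphi_R(x):=x_d^2+R^2\theta(|y|/R)\ge x_d^2\ge0$. Let
\[
\mathcal V_R(t):=\int_{\R^{6}}\varphi_R(x)\bigl(|u(t,x)|^2+2|v(t,x)|^2\bigr)\,dx,
\]
which is finite (as $x_d u_0,x_d v_0\in L^2$) and, after a standard approximation, $C^2$ in $t$. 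Using \eqref{SNLS} one computes
\[
\frac{d^2}{dt^2}\mathcal V_R(t)=8\,G(u(t),v(t))+\mathcal E^{\mathrm{kin}}_R(t)+\mathcal E^{\mathrm{nl}}_R(t)+\mathcal R_\kappa(t),
\]
where $\mathcal E^{\mathrm{kin}}_R$ is supported in $\{|y|>R\}$ and, thanks to $\nabla^2\varphi_R\le2\,\mathrm{Id}$ and $\|\Delta^2\varphi_R\|_{L^\infty}\lesssim R^{-2}$, satisfies $\mathcal E^{\mathrm{kin}}_R(t)\le CR^{-2}M(u_0,v_0)$; $\mathcal E^{\mathrm{nl}}_R$ obeys $|\mathcal E^{\mathrm{nl}}_R(t)|\le C\int_{|y|>R}|v(t)||u(t)|^2\,dx$; and $\mathcal R_\kappa$ is an extra term proportional to $(2\kappa-1)$, which vanishes in the mass-resonance case $\kappa=\tfrac12$. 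Because cylindrical symmetry is preserved by \eqref{SNLS}, $u(t),v(t)$ remain cylindrically symmetric, and applying the radial Sobolev (Strauss) inequality in $\R^{5}$ for each fixed $x_d$, together with the one-dimensional Gagliardo--Nirenberg inequality in $x_d$ and mass conservation, gives the decisive gain
\[
\int_{|y|>R}|v(t)||u(t)|^2\,dx\ \le\ C\,R^{-2}\,\bigl(1+M(u_0,v_0)\bigr)\,T(u(t),v(t))^{\beta}
\]
for some $\beta\in(0,1)$. The term $\mathcal R_\kappa$ in the non-mass-resonance case should be absorbed analogously after an integration by parts producing $R^{-2}$ decay, possibly at the price of working with a corrected functional $\mathcal V_R-c(\kappa)\,\rea\!\int\varphi_R\,v\,\overline{u}^2\,dx$; I expect this to be the most delicate point of the proof.

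Putting everything together and using Young's inequality to absorb $CR^{-2}(1+M(u_0,v_0))\,T^{\beta}$ into $4\delta\,T$, and then the bound $T(u(t),v(t))>T(\phi,\psi)$, one arrives, for $R$ fixed large enough in terms of $(u_0,v_0)$ and $(\phi,\psi)$, at
\[
\frac{d^2}{dt^2}\mathcal V_R(t)\ \le\ -\,2\delta\,T(\phi,\psi)\ =:\ -c_1\ <\ 0,\qquad t\in I\cap[0,\infty).
\]
Integrating twice, $\mathcal V_R(t)\le\mathcal V_R(0)+\mathcal V_R'(0)\,t-\tfrac{c_1}{2}\,t^2$, which is eventually negative, contradicting $\mathcal V_R\ge0$; hence $\sup I<\infty$, and the case $\inf I>-\infty$ is symmetric. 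Equivalently, in the ODE formulation emphasized in the introduction, from $(\mathcal V_R')'\le -c_1$ one gets $\mathcal V_R'(t)\to-\infty$, so $\mathcal V_R(t)=\mathcal V_R(0)+\int_0^t\mathcal V_R'\to-\infty$, which is impossible.
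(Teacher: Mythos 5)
Your variational preliminaries are correct and essentially reproduce the paper's Lemmas \ref{lem-equi-6d} and \ref{lem-uppe-boun-6d}, and your estimate of the nonlinear error on $\{|y|>R\}$ via the partial Strauss inequality is exactly the content of Lemma \ref{lem-loca-viri-est-cyli-1}. The gap is in the virial step. You run a concavity argument on the localized variance $\mathcal V_R(t)=\int\varphi_R\,(|u|^2+2|v|^2)\,dx$, but for $\kappa\neq\frac12$ one has
\[
\frac{d}{dt}\mathcal V_R(t)=2\ima\int\nabla\varphi_R\cdot\bigl(\nabla u\,\overline u+2\kappa\,\nabla v\,\overline v\bigr)\,dx
= M_{\varphi_R}(t)+2(2\kappa-1)\,\ima\int\nabla\varphi_R\cdot\nabla v\,\overline v\,dx ,
\]
so $\frac{d^2}{dt^2}\mathcal V_R$ is \emph{not} $\frac{d}{dt}M_{\varphi_R}$ plus a harmless remainder: differentiating the discrepancy produces, through the source term $u^2$ in the $v$-equation, contributions of the schematic form $(2\kappa-1)\rea\int\partial_j\varphi_R\,\partial_j(u^2)\,\overline v\,dx$, which live where $\nabla\varphi_R\sim x$ (hence are not confined to $|y|>R$), carry no negative power of $R$, and are not controlled by $G$, $T$, or the conserved quantities. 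This is precisely the known obstruction in the non-mass-resonance case. Your proposed fix --- a corrected functional $\mathcal V_R-c(\kappa)\rea\int\varphi_R\,v\,\overline u^2\,dx$ --- generates, upon differentiation, quartic terms of the type $\int\varphi_R|u|^2|v|^2\,dx$ and $\int\varphi_R|u|^4\,dx$ that are likewise uncontrolled in the energy-critical setting. So your scheme only closes for $\kappa=\frac12$, while the theorem is claimed for all $\kappa>0$.

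The paper avoids this entirely by never taking a second derivative. By Lemma \ref{lem-viri}, $\frac{d}{dt}M_{\varphi_R}$ has a clean expression for every $\kappa>0$, and Lemmas \ref{lem-uppe-boun-6d} and \ref{lem-loca-viri-est-cyli-1} give $\frac{d}{dt}M_{\varphi_R}(t)\le -4c-4\vareps\, T(u(t),v(t))$ once $R$ is large. Combining this with $|M_{\varphi_R}(t)|\le C\sqrt{T(u(t),v(t))}$ (Cauchy--Schwarz plus mass conservation) yields $M_{\varphi_R}(t)\le -A\int_{t_0}^t|M_{\varphi_R}(s)|^2\,ds$, and the Riccati inequality satisfied by $z(t)=\int_{t_0}^t|M_{\varphi_R}(s)|^2\,ds$ forces $z$, hence $M_{\varphi_R}$, to blow up at a finite time, contradicting global existence. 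If you replace your concavity step with this first-derivative ODE argument, the rest of your write-up (the variational bounds and the cylindrical Sobolev estimate of the error terms) can stay as is.
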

	 
It is worth mentioning that finite time blow-up with negative energy radial data was established in \cite{Yoshida}, while for non-negative energy radial data, the blow-up result was shown in \cite{IKN-NA} for data satisfying
	 		\begin{align} \label{cond-IKN-6d}
	 		E(u_0,v_0) < E(\phi,\psi), \quad G(u_0,v_0) <0.
	 		\end{align} 
Since we will prove in Lemma \ref{lem-equi-6d} that \eqref{cond-blow-6d} is equivalent to \eqref{cond-IKN-6d}, our result restricted to a radial framework, would be equivalent to the one in \cite{IKN-NA}.

\begin{remark}
If $\kappa =\frac{1}{2}$,  the blow-up result with negative energy and finite variance data was shown in Hayashi, Ozawa, and Tanaka, see \cite{HOT}. 
\end{remark}
	 	
\subsection{Mass-critical case}

In the mass-critical case, we have the following blow-up or grow-up results for \eqref{SNLS}.
	 
	 \begin{theorem} \label{theo-blow-grow-4d}
	 	Let $d=4$ and $0<\kappa \ne \frac{1}{2}$. Let $(u_0,v_0) \in H^1 \times H^1$ be radially symmetric satisfying $E(u_0,v_0)<0$. Then the corresponding solution to \eqref{SNLS} either blows-up forward in finite time, i.e. $T^*<\infty$, or it blows-up in infinite time in the sense that $T^*=\infty$ and 
	 	\begin{equation}\label{grow-4d}
	 	T(u(t),v(t)) \geq C t^2
	 	\end{equation}
	 	for all $t\geq t_0$, where $C>0$ and $t_0 \gg 1$ depend only on $\kappa, M(u_0,v_0)$, and $E(u_0,v_0)$. A similar statement holds for negative times.
	 \end{theorem}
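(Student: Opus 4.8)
\medskip

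\noindent\textit{Proof proposal.} The plan is to combine an identity special to the mass-critical dimension with a localized virial estimate and an ODE comparison argument; write $M_0:=M(u_0,v_0)$ and $E_0:=E(u_0,v_0)$.

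First I would observe that in $d=4$ the definitions \eqref{defi-K}--\eqref{defi-G} yield $G(f,g)=T(f,g)-2P(f,g)=2E(f,g)$, so that conservation of energy gives $G(u(t),v(t))=2E_0<0$ on the whole maximal interval of existence. If the maximal forward time $T^*$ is finite we are in the first alternative and there is nothing to prove; hence from now on assume $T^*=\infty$, so that the goal becomes the lower bound \eqref{grow-4d}.

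Next I would run a \emph{localized} virial argument, since no finite-variance assumption is available. Fix a radial $\varphi\in C^\infty(\R^4)$ with $\varphi(x)=|x|^2$ for $|x|\le1$, $\varphi$ constant for $|x|\ge2$, $\varphi''\le2$, and $|\partial^\alpha\varphi|\lesssim1$ for $1\le|\alpha|\le4$; for $R>0$ put $\varphi_R(x):=R^2\varphi(x/R)$ and
\[
V_R(t):=\int_{\R^4}\varphi_R\bigl(|u(t)|^2+2|v(t)|^2\bigr)\,dx,\qquad
Z_R(t):=V_R'(t)=2\,\ima\!\int_{\R^4}\nabla\varphi_R\cdot\bigl(\overline u\,\nabla u+2\kappa\,\overline v\,\nabla v\bigr)\,dx .
\]
The weight $|u|^2+2|v|^2$ is the one appearing in $M(u,v)$, and this choice is exactly what makes the nonlinear contributions to $Z_R$ cancel; in particular $0\le V_R(t)\le C R^2 M_0$ and $|Z_R(t)|\le C R\,M_0^{1/2}\,T(u(t),v(t))^{1/2}$. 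Differentiating $Z_R$ along \eqref{SNLS} and estimating the terms supported in $\{|x|\ge R\}$ by Cauchy--Schwarz, conservation of mass, and the radial (Strauss-type) bound $\|f\|_{L^\infty(|x|\ge R)}\lesssim R^{-3/2}\|f\|_{L^2(|x|\ge R)}^{1/2}\|\nabla f\|_{L^2(|x|\ge R)}^{1/2}$ in $\R^4$, together with Step~1, I expect to reach
\[
Z_R'(t)\le 16\,E_0 + 8\kappa(2\kappa-1)\,\|\nabla v(t)\|_{L^2}^2 + C\bigl(R^{-2}+R^{-3/2}\,T(u(t),v(t))^{1/4}\bigr),
\]
with $C=C(\kappa,M_0)$. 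The term $8\kappa(2\kappa-1)\|\nabla v(t)\|_{L^2}^2$ is the residue left by the failure of the mass-resonance condition (cf.\ the Remark above): for $0<\kappa<\tfrac12$ it has the favourable sign and is discarded, while for $\kappa>\tfrac12$ it must be kept, and it is precisely this term that obstructs a straight concavity/finite-time conclusion — which is why only the stated dichotomy can be asserted.

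Finally I would close with an ODE argument, exploiting that $V_R\ge0$ is bounded by $CR^2M_0$, hence cannot be strictly concave on long time intervals. Concretely, if $T(u(\cdot),v(\cdot))\le\eta$ on an interval $[a,b]$, then choosing $R\asymp\eta^{1/6}$ (and $R$ large in absolute terms, and using $\|\nabla v\|_{L^2}^2\le T(u,v)/\kappa$ to absorb the non-resonant term when $\kappa>\tfrac12$) makes the right-hand side above $\le 8E_0<0$ there; since $|V_R'|=|Z_R|\lesssim R\eta^{1/2}$ on $[a,b]$, strict concavity together with $0\le V_R\le CR^2M_0$ forces $b-a\lesssim\eta^{2/3}$. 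Applying this on the dyadic time intervals $[t/2,t]$, combined with the relation $T(u(t),v(t))\gtrsim Z_R(t)^2/(R^2M_0)$ and a bootstrap on the resulting growth rate, should upgrade to the claimed quadratic lower bound \eqref{grow-4d} for $t\ge t_0$, with $C$ and $t_0$ depending only on $\kappa$, $M_0$, $E_0$; the statement for negative times then follows by time reversal. I expect the main difficulty to lie exactly in this last step: because $d=4$ is mass-critical, the localization error genuinely involves the unknown quantity $T(u(t),v(t))$ itself, so $R$ has to be calibrated against the running time scale and the bootstrap bookkeeping is delicate; this is also the mechanism by which, in contrast with the finite-variance case, one can only guarantee blow-up in finite \emph{or} infinite time.
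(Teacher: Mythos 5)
Your opening observation ($G=2E$ in $d=4$) and the general shape (localized virial plus radial Sobolev on $|x|\ge R$) match the paper, but there are two genuine gaps. The first is the choice of virial functional. You take $Z_R=V_R'$ with the momentum density weighted by $(1,2\kappa)$, so that $Z_R$ is the derivative of the localized mass; but then $Z_R'$ carries a non-resonance defect that is \emph{not} $8\kappa(2\kappa-1)\|\nabla v\|_{L^2}^2$. Computing $\partial_t\ima(\bar u\nabla u)$ and $\partial_t\ima(\bar v\nabla v)$ along \eqref{SNLS}, the nonlinear contributions combine into $\rea\nabla(v\overline u^2)$ (which integrates by parts into the potential term) \emph{only} when the two momentum densities carry equal weights; with weights $(1,2\kappa)$ one is left with a signless cross term of the form $8(2\kappa-1)\int\nabla\varphi_R\cdot\rea(\bar v u\nabla u)\,dx$ together with a wrong coefficient $(2-2\kappa)$ on the potential. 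This term grows like $R$ on its support and cannot be discarded or dominated by $\|\nabla v\|^2\le T/\kappa$; and even if your stated form were correct, for $\kappa>\tfrac12$ you would face a positive term comparable to $T(u(t),v(t))$ with nothing on the right-hand side to absorb it. The paper's Lemma \ref{lem-viri} sidesteps this entirely: it uses $M_{\varphi_R}$ with \emph{equal} weights on $\nabla u\,\bar u$ and $\nabla v\,\bar v$, whose first derivative equals $8G+\text{(localization errors)}$ for every $\kappa>0$; the price is that $M_{\varphi_R}$ is no longer the derivative of a nonnegative quantity, which is exactly why the conclusion is the stated dichotomy rather than finite-time blow-up.

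The second gap is the error term and the ensuing bootstrap. Your localization error $R^{-3/2}T^{1/4}$ discards the crucial fact that the nonlinear remainder is supported where $\theta_{2,R}=8-\Delta\varphi_R$ is nonzero, i.e.\ on $|x|\ge R$, so the gradient appearing after the radial Sobolev embedding can be kept as $\|\theta_{2,R}\nabla u\|_{L^2}$ rather than the full $\|\nabla u\|_{L^2}$. Lemma \ref{lem-loca-viri-est-rad-2} records the error as $CR^{-3/2}(\theta_{2,R})^2|\nabla u|^2$, and the Ogawa--Tsutsumi-type cutoff \eqref{defi-chi-app} is built precisely so that $\theta_{1,R}(r)-CR^{-3/2}(\theta_{2,R}(x))^2\ge 0$ pointwise for a \emph{fixed} large $R$ depending only on $\kappa$ and $M(u_0,v_0)$. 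Hence $\frac{d}{dt}M_{\varphi_R}(t)\le 8E_0<0$ uniformly in $t$, which after integration gives $M_{\varphi_R}(t)\le 4E_0t$ for $t\ge t_0$ and, via $|M_{\varphi_R}(t)|\lesssim\sqrt{T(u(t),v(t))}$, the pointwise bound \eqref{grow-4d}. Your scheme of calibrating $R\asymp\eta^{1/6}$ on intervals where $T\le\eta$ can at best show that such intervals are short, i.e.\ divergence of $T$ along a sequence of times; it does not yield $T(u(t),v(t))\ge Ct^2$ for \emph{all} $t\ge t_0$, which is the whole point of the refinement over \cite{IKN-NA}.
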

Under the assumption of Theorem \ref{theo-blow-grow-4d}, the blow-up or grow-up result along one time sequence was proved in \cite[Theorem 1.2]{IKN-NA}. More precisely, if $T^*=\infty$, then there exists a time sequence $t_n\rightarrow \infty$ such that $\|(u(t_n),v(t_n))\|_{H^1\times H^1}\rightarrow \infty$ as $n\rightarrow \infty$. 

By performing a more careful analysis, our argument yields to a stronger result with respect to the one in \cite{IKN-NA}. Indeed, we are able to show a growth rate for the kinetic energy of the form \eqref{grow-4d} which in turn implies the grow-up result along an arbitrary diverging sequence of times.
We would like to mention that this grow-up result along any diverging time sequence, is also an interesting open problem related to the usual mass-supercritical focusing cubic 3D NLS, see the weak conjecture of Holmer and Roudenko in \cite{HR-CPDE}.
	 
\begin{remark} In the case $\kappa=\frac{1}{2}$ and for radial data with negative energy, the finite time blow-up was shown by the first author in \cite{Dinh-NA}. For the long time dynamics in the mass-critical case we refer to \cite{IKN-mass}.
\end{remark}
	 
We  give now the following blow-up or grow-up result for anisotropic solutions to \eqref{SNLS}.
	 
\begin{theorem} \label{theo-blow-grow-Sigma-4d}
	 	Let $d=4$ and $0<\kappa \ne \frac{1}{2}$. Let $(u_0,v_0) \in \Sigma_4 \times \Sigma_4$ satisfy $E(u_0,v_0)<0$. Then the corresponding solution to \eqref{SNLS} either blows-up forward in finite time, i.e. $T^*<\infty$, or $T^*=\infty$ and there exists a time sequence $t_n\rightarrow \infty$ such that $\|(u(t_n),v(t_n))\|_{H^1 \times H^1} \rightarrow \infty$ as $n\rightarrow \infty$. If we assume $\kappa =\frac{1}{2}$, then either $T^*<\infty$ or $T^*=\infty$ and there exists a time sequence $t_n\rightarrow \infty$ such that $\|\partial_4u(t_n)\|_{L^2} \rightarrow \infty$ as $n\rightarrow \infty$. A similar statement holds for negative times.
\end{theorem}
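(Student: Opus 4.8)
The plan is to run an ODE (rather than concavity) argument on an anisotropic localised virial quantity, tuned to the cylindrical symmetry and to the finite variance in the last variable. Write $x=(y,x_4)$ with $y\in\R^{3}$, fix $R\gg 1$, and let $\varphi_R\colon[0,\infty)\to[0,\infty)$ be a smooth radial cut-off with $\varphi_R(r)=r^{2}$ for $r\le R$, $\varphi_R$ bounded, $\varphi_R''\le 2$ and $\varphi_R'(r)/r\le 2$ (so that the Hessian of $y\mapsto\varphi_R(|y|)$ is $\le 2\,\mathrm{Id}$), $|\partial^{\alpha}\varphi_R(|y|)|\lesssim R^{2-|\alpha|}$, and moreover $\varphi_R$ concave for $|y|$ large. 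Set $a_R(x):=\varphi_R(|y|)+x_4^{2}$ and define
\[
\mathcal V_R(t):=\int a_R(x)\big(|u(t,x)|^{2}+2|v(t,x)|^{2}\big)\,dx,\qquad \mathcal Z_R(t):=\tfrac{d}{dt}\mathcal V_R(t).
\]
Since the $\Sigma_4\times\Sigma_4$ regularity is propagated by the flow of \eqref{SNLS} (the general case following by approximation), $\mathcal V_R$ and $\mathcal Z_R$ are finite on the maximal existence interval, and $\mathcal V_R(t)\ge 0$.

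First I would record the virial identities. A direct computation using \eqref{SNLS} shows that the quadratic coupling cancels at first order, so $\mathcal Z_R(t)=2\int\nabla a_R\cdot\big(\mathrm{Im}(\overline u\,\nabla u)+2\kappa\,\mathrm{Im}(\overline v\,\nabla v)\big)\,dx$. Differentiating once more, using $d=4$ (hence $G=2E$, with $E$ conserved), one is led in the mass-resonant case $\kappa=\tfrac12$ to
\[
\mathcal Z_R'(t)=16\,E(u_0,v_0)-\mathcal K_R(t)+\mathcal N_R(t)+O\!\big(R^{-2}M(u_0,v_0)\big),
\]
where $\mathcal K_R\ge 0$ is a kinetic remainder supported in $\{|y|\ge R\}$, with $\mathcal K_R(t)\gtrsim\|\nabla_y u(t)\|_{L^{2}(|y|\ge R)}^{2}+\|\nabla_y v(t)\|_{L^{2}(|y|\ge R)}^{2}$ (non‑negativity being a consequence of $\mathrm{Hess}_y\varphi_R\le 2\,\mathrm{Id}$ together with $\partial_4^{2}a_R\equiv 2$), while the nonlinear remainder obeys $|\mathcal N_R(t)|\lesssim\int_{|y|\ge R}|v(t)|\,|u(t)|^{2}\,dx$. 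For $\kappa\ne\tfrac12$ one picks up in addition a term $(2\kappa-1)\,\mathcal R_R(t)$, with $\mathcal R_R$ a finite combination of integrals of the form $\int\nabla a_R\cdot(\text{cubic monomials in }u,v,\overline u,\overline v)$ and of $(|\nabla u|^{2}+|\nabla v|^{2})$-type terms localised at $|y|\gtrsim R$; these do not cancel as when $\kappa=\tfrac12$, but they are controlled by $\|(u(t),v(t))\|_{H^{1}\times H^{1}}$ via Gagliardo–Nirenberg and conservation of $M$.

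Next I would estimate the nonlinear remainder through a Strauss-type inequality for partially radial functions in $\R^{4}$ (radial in $y\in\R^{3}$), $|f(y,x_4)|\lesssim|y|^{-1}\|f(\cdot,x_4)\|_{\dot H^{1}_y}^{1/2}\|f(\cdot,x_4)\|_{L^{2}_y}^{1/2}$, which combined with the one-dimensional Sobolev embedding in $x_4$ and Cauchy–Schwarz yields bounds of the type
\[
\int_{|y|\ge R}|v|\,|u|^{2}\,dx\ \lesssim\ R^{-1}\,\|\nabla_y u\|_{L^{2}}^{1/2}\,M(u_0,v_0)^{a}\,\big(\|\partial_4 u\|_{L^{2}}\,\|\partial_4 v\|_{L^{2}}\big)^{1/4}
\]
for a suitable exponent $a$, so that $\mathcal N_R$ (or, for the ODE argument, its time average) becomes small as $R\to\infty$ whenever the right-hand side stays bounded; a companion computation with the weight $x_4^{2}$ alone shows that $\int_0^{t}\big(\|\nabla_y u(s)\|_{L^{2}}^{2}+\|\nabla_y v(s)\|_{L^{2}}^{2}\big)\,ds$ grows at most linearly in $t$ once the transverse derivatives of the solution remain bounded, which feeds back into this control.

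Finally, the dichotomy follows from an ODE argument. Suppose $T^{*}=+\infty$ and, for contradiction, that in addition $\sup_{t\ge 0}\|(u(t),v(t))\|_{H^{1}\times H^{1}}<\infty$ when $\kappa\ne\tfrac12$, resp.\ $\sup_{t\ge 0}\|\partial_4 u(t)\|_{L^{2}}<\infty$ when $\kappa=\tfrac12$. By the preceding steps, after fixing $R$ large the right-hand side of the second virial identity is, on average in time, bounded above by $8\,E(u_0,v_0)<0$; integrating twice and invoking $\mathcal V_R(t)\ge 0$ gives
\[
0\le\mathcal V_R(t)\le \mathcal V_R(0)+\mathcal Z_R(0)\,t+4\,E(u_0,v_0)\,t^{2}\longrightarrow-\infty\quad(t\to\infty),
\]
a contradiction. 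Hence either $T^{*}<\infty$, or $T^{*}=\infty$ and the relevant norm is unbounded along a diverging sequence of times; the statement for negative times follows from the invariance of \eqref{SNLS} under $(u,v)(t,x)\mapsto(\overline u,\overline v)(-t,x)$. I expect the main obstacle to be the error analysis for $\kappa\ne\tfrac12$: the non-mass-resonant remainder $(2\kappa-1)\mathcal R_R$ genuinely couples $\nabla u$ and $\nabla v$, so it can only be absorbed under an a priori bound on the whole $H^{1}\times H^{1}$ norm — which is precisely why the conclusion is phrased in those terms, whereas the cleaner identity at $\kappa=\tfrac12$ allows one to isolate $\partial_4 u$. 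A secondary delicate point is the design of $\varphi_R$ (Hessian $\le 2\,\mathrm{Id}$ and concavity at infinity) ensuring that $\mathcal K_R$ both has the favourable sign and is strong enough to help absorb $\mathcal N_R$.
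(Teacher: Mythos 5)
Your treatment of the mass-resonance case $\kappa=\tfrac12$ is essentially the paper's: there the first-order virial $M_{\varphi_R}$ coincides with $\tfrac{d}{dt}V_{\varphi_R}$ for a non-negative weighted mass $V_{\varphi_R}$, one integrates the virial inequality twice and contradicts non-negativity, and only a bound on $\|\partial_4 u(t)\|_{L^2}$ is needed because the localized $\nabla_y u$ error is absorbed into the positive term $\vartheta_{1,R}$ by the Ogawa--Tsutsumi-type cutoff. The gap is in the case $\kappa\ne\tfrac12$. You set $\mathcal Z_R=\tfrac{d}{dt}\mathcal V_R=2\int\nabla a_R\cdot\left(\ima(\overline u\nabla u)+2\kappa\,\ima(\overline v\nabla v)\right)dx$ and differentiate again, planning to integrate twice. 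But the second derivative of the weighted mass is exactly where the non-mass-resonance obstruction lives: the nonlinear contribution to $\tfrac{d}{dt}\mathcal Z_R$ contains, besides the harmless $\rea\int\Delta a_R\,v\,\overline u^2\,dx$, a leftover term proportional to
\[
(2\kappa-1)\,\rea\int\nabla a_R\cdot\nabla v\;\overline{u}^2\,dx ,
\]
which does not vanish for $\kappa\ne\tfrac12$. It carries the weight $\nabla a_R$, whose last component equals $2x_4$ (unbounded, since $a_R$ is not cut off in $x_4$) and whose $y$-components are of size $R$ on $\{|y|\le R\}$. This term is therefore neither $o_R(1)$ nor controlled by $\sup_t\|(u(t),v(t))\|_{H^1\times H^1}$; the assertion that $\mathcal R_R$ is ``controlled via Gagliardo--Nirenberg and conservation of $M$'' is precisely the missing step, and it would require uniform-in-time weighted bounds (e.g.\ on $\|x_4\,u^2(t)\|_{L^2}$) that are not available. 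This is why the paper does \emph{not} run a concavity argument when $\kappa\ne\tfrac12$.

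What the paper does instead is to work with $M_{\varphi_R}(t)=2\ima\int\nabla\varphi_R\cdot(\nabla u\,\overline u+\nabla v\,\overline v)\,dx$, i.e.\ with \emph{equal} coefficients on the $u$- and $v$-currents. For this choice the offending $\nabla\varphi_R\cdot\nabla v\,\overline u^2$ term cancels identically for \emph{every} $\kappa>0$, and $\tfrac{d}{dt}M_{\varphi_R}$ has the clean form of Lemma \ref{lem-viri}. The price is that $M_{\varphi_R}$ is no longer the derivative of a non-negative quantity when $\kappa\ne\tfrac12$, so one closes the argument at the level of the first derivative only: Lemma \ref{lem-loca-viri-est-cyli-2} together with the cutoff construction gives $\tfrac{d}{dt}M_{\varphi_R}(t)\le 4E(u_0,v_0)<0$ under the contradiction hypothesis (which enters only through the bound on $\|\partial_4 u(t)\|_{L^2}$ in the error term), hence $M_{\varphi_R}(t)\le 4E(u_0,v_0)\,t$ for large $t$, and the elementary bound $|M_{\varphi_R}(t)|\lesssim\sqrt{T(u(t),v(t))}$ forces $T(u(t),v(t))\gtrsim t^2$, contradicting the assumed uniform $H^1\times H^1$ bound. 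To repair your argument you should replace the double integration in the non-mass-resonant case by this first-derivative ODE closure; the rest of your outline (propagation of $\Sigma_4$, the Strauss-type estimate in $y$, the one-dimensional trace bound in $x_4$, and the time-reversal symmetry for negative times) is consistent with the paper.
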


\subsection{Extensions to a general system of NLS with quadratic interactions}
We conclude this section by listing some extensions of the previous Theorems for general NLS systems with quadratic interactions.

In dimension $d=5$ and $d=6,$ namely in the mass-supercritical and the energy-critical case, respectively, the results above can be extended -- provided that some structural hypothesis are satisfied -- to the following initial value problem for general system of NLS with quadratic interactions:
\begin{align} \label{GQNLS}
	\left\{
	\renewcommand*{\arraystretch}{1.2}
	\begin{array}{rcl}
	ia_j\partial_t u_j+ b_j\Delta u_j -c_ju_j &=&-f_j(u_1,\dots, u_N), \quad j\in\{1,\dots, N\},\\
	(u_1,\dots, u_N)(0,\cdot) &=& (u_{0,1},\dots,u_{0,N}) \in H^1(\R^d) \times \dots \times H^1(\R^d), 
	\end{array}
	\right.	
	\end{align}
where $u_j:\R\times\R^d\to\C$, the parameters $a_j,b_j,c_j$ are real coefficients satisfying $a_j>0, b_j>0$ and $c_j\geq0$, and the functions $f_j$ grow quadratically for all $j =1,\dots, N$. More precisely, under the assumptions (H1)--(H8) in \cite{NP-blow}, Theorems \ref{theo-blow-5d} and \ref{theo-blow-6d} can be stated for \eqref{GQNLS} as well, with the necessary modifications. In particular, the set of conditions (H1)--(H8) in \cite{NP-blow} (see also \cite{NP-CCM}) ensure that \eqref{GQNLS} is local well-posed, there exist ground states (along with stability and instability properties), and the mass and the energy are conserved. Here the mass is defined by
\[
\mathcal M(\vec u(t)):=\sum_{j=1}^{N}\frac{a_j s_j}{2}\|u_j(t)\|_{L^2}^2,
\]
where the real parameters $s_j>0$ satisfy 
\[
\ima \sum_{j=1}^{N}s_jf_j(\boldsymbol{z})\bar z_j=0, \quad \forall \boldsymbol{z}=(z_1,\dots, z_N)\in\C^N,
\] 
and $\vec u$ is  the compact notation for $(u_1,\dots, u_N)$. The energy is instead defined by 
\[
\mathcal E(\vec u(t)):= \frac{1}{2}\sum_{j=1}^{N} b_j\|\nabla u_j(t)\|_{L^2}^2 +\frac{1}{2}\sum_{j=1}^{N}c_j \|u_j(t)\|_{L^2}^2- \rea \int F(\vec u(t)) dx,
\]
where $F:\C^N\to\C$ is such that $f_j=\partial_{\bar z_j}F+\overline{\partial_{z_j}F}$ for any $j\in\{1,\dots, N\}.$\\

In $d=5$, we denote by $\vec\phi=(\phi_1,\dots, \phi_N)$ the ground state related to the system of elliptic equations
\begin{align} \label{ell-equ-vec}
-b_j\Delta \phi_j+\left(\frac{a_js_j}{2}\omega+c_j \right)\phi_j=f_j(\vec \phi), \quad j\in\{1,\dots, N\},\quad \omega\in\R,
\end{align}
i.e. $\vec{\phi}$ is a non-trivial real-valued solution of \eqref{ell-equ-vec} that minimizes the action functional
\[
\mathcal{S}(\vec{\phi}) = \frac{1}{2} \mathcal{T}(\vec{\phi}) + \frac{1}{2} \mathcal{Q}(\vec{\phi}) - \mathcal{P}(\vec{\phi})
\]
over all non-trivial real-valued solutions to \eqref{ell-equ-vec}, where
\[
\mathcal T(\vec g):=\sum_{j=1}^Nb_j\|\nabla g_j\|_{L^2}^2, \quad \mathcal Q(\vec g):=\sum_{j=1}^N\left(\frac{a_js_j}{2}\omega+c_j\right)\|g_j\|_{L^2}^2, \quad \mathcal P (\vec g) := \rea \int F(\vec g) dx.
\]
Under the assumptions (H1)--(H8) in \cite{NP-blow} , ground states related to \eqref{ell-equ-vec} do exist if 
\begin{align} \label{cond-c}
\frac{a_js_j}{2}\omega+c_j >0, \quad \forall j \in \{1, \dots, N\}. 
\end{align}
If we denote by $\mathbb G(\omega, \cb)$ the set of ground states related to \eqref{ell-equ-vec}, where $\cb=(c_1, \dots, c_N)$, then $\mathbb G(\omega, \cb) \ne \emptyset$ provided that \eqref{cond-c} is satisfied. In particular, $\mathbb G (1,\boldsymbol{0}) \ne \emptyset$. Moreover, the following Gagliardo-Nirenberg inequality
\begin{align} \label{GN-ineq-vec}
P(\vec g) \leq C_{\opt} \left[ \mathcal Q(\vec g) \right]^{\frac{6-d}{4}} \left[ \mathcal T (\vec g) \right]^{\frac{d}{4}}
\end{align}
is achieved by a ground state $\vec \phi \in \mathbb G (\omega, \cb)$. We refer the reader to \cite[Section 4]{NP-CCM} for more details on ground states related to \eqref{ell-equ-vec}.

 By adapting the arguments presented in this paper, we can prove the result in Theorem \ref{theo-blow-5d} provided that we replace \eqref{cond-blow} with 
\begin{equation}\tag{BC$^\prime_{5d}$}
\mathcal M(\vec u_0) \mathcal E(\vec u_0)<\mathcal M(\vec \phi)\mathcal E_0(\vec \phi) \quad \& \quad \mathcal M(\vec u_0)\mathcal T(\vec u_0)>\mathcal M(\vec \phi)\mathcal T(\vec \phi),
\end{equation}
where $\vec \phi \in \mathbb G(1,\boldsymbol{0})$ and 
\[
\mathcal E_0(\vec g):= \frac{1}{2} \mathcal T(\vec g)- \mathcal P(\vec g).
\]
\\

Similarly, in $d=6,$ we can prove the result in Theorem \ref{theo-blow-6d} provided that we replace \eqref{cond-blow-6d} with 
\begin{equation}\tag{BC$^\prime_{6d}$}
\mathcal E(\vec u_0)<\mathcal E_0(\vec\varphi) \quad \& \quad \mathcal T(\vec u_0)>\mathcal T(\vec \varphi),
\end{equation}
where $\vec\varphi=(\varphi_1,\dots, \varphi_N)$ is the ground state related to
\begin{align} \label{ell-equ-vec-6d}
-b_j\Delta \varphi_j=f_j(\vec\varphi), \quad j\in\{1,\dots, N\}.
\end{align}
Here $\vec \varphi$ is a ground state related to \eqref{ell-equ-vec-6d} if it is a non-trivial real valued solution to \eqref{ell-equ-vec-6d} that minimizes the functional $\mathcal E_0$ over all non-trivial real-valued solutions of \eqref{ell-equ-vec-6d}. Note that blow-up results similar to Theorems \ref{theo-blow-5d} and \ref{theo-blow-6d} for radial solutions to \eqref{GQNLS} were established in \cite{NP-blow}. Thus our extensions are for anisotropic solutions. 

\noindent As pointed-out in \cite{NP-blow}, the non-mass-resonance condition $0<\kappa\neq\frac12$ for \eqref{SNLS} in Theorems \ref{theo-blow-5d} and  \ref{theo-blow-6d}, corresponds to the following analogous condition for \eqref{GQNLS}:
\begin{align} \label{cond-non-mass-reso-GQNLS}
\ima  \sum_{j=1}^N\frac{a_j}{2b_j}f_j(\boldsymbol{z})\bar z_j  \ne 0, \quad \forall \boldsymbol{z}=(z_1,\dots,z_N)\in\C^N.
\end{align}

In the mass-critical case $d=4$, we have the following blow-up results for \eqref{GQNLS}. 

\begin{theorem} \label{theo-blow-grow-4d-GQNLS}
	Let $d=4$ and assume that \eqref{cond-non-mass-reso-GQNLS} holds. Let $\vec u_0=(u_{0,1}, \dots, u_{0,N}) \in H^1 \times \dots \times H^1$ be radially symmetric satisfying $\mathcal E(\vec u_0) <0$. Then the corresponding solution to \eqref{GQNLS} either blows-up forward in finite time, i.e. $T^*<\infty$, or it blows-up in infinite time in the sense that $T^*=\infty$ and
	\[
	\mathcal T (\vec u(t)) \geq Ct^2
	\]
	for all $t\geq t_0$, where $C>0$ and $t_0\gg 1$ depend only on $\mathcal M(\vec u_0)$, and $\mathcal E(\vec u_0)$. Moreover, if we assume
	\begin{align} \label{cond-mass-reso-GQNLS}
	\ima  \sum_{j=1}^N\frac{a_j}{2b_j}f_j(\boldsymbol{z})\bar z_j  = 0, \quad \forall \boldsymbol{z}=(z_1,\dots,z_N)\in\C^N
	\end{align} 
	instead of \eqref{cond-non-mass-reso-GQNLS}, then the corresponding solution to \eqref{GQNLS} blows-up in finite time.
\end{theorem}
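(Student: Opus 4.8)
The plan is to carry over the proof of Theorem~\ref{theo-blow-grow-4d} to the system \eqref{GQNLS}, the only new ingredients being the abstract virial identities derived from (H1)--(H8) in \cite{NP-blow} (which also guarantee the local theory, the conservation of $\mathcal M$ and $\mathcal E$, and the variational characterisation of $\mathbb G(1,\boldsymbol{0})$) and the Gagliardo--Nirenberg inequality \eqref{GN-ineq-vec} in the mass-critical case $d=4$; note that for $\omega=1$, $\cb=\boldsymbol{0}$ one has $\mathcal Q(\vec g)=\mathcal M(\vec g)$, so that $\mathcal P(\vec u(t))\le C_{\opt}\,\mathcal M(\vec u_0)^{1/2}\,\mathcal T(\vec u(t))$ along the flow, and that $\mathcal T-2\mathcal P=2\mathcal E-\sum_j c_j\|u_j\|_{L^2}^2\le 2\mathcal E$. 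First I would fix a radial cut-off $\varphi_R$ with $\varphi_R(x)=|x|^2$ for $|x|\le R$, $\varphi_R$ constant for $|x|\ge 2R$, $\partial_r^2\varphi_R\le 2$, $|\nabla\varphi_R|\lesssim R$, $|\Delta\varphi_R|\lesssim 1$, $|\Delta^2\varphi_R|\lesssim R^{-2}$, and introduce the localised virial momentum
\[
\mathcal J_{\varphi_R}(t):=\sum_{j=1}^N b_j s_j\,\ima\int\overline{u_j}\,\nabla\varphi_R\cdot\nabla u_j\,dx .
\]

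The first computation is to check, using the equations and the identity $\ima\sum_j s_j f_j(\boldsymbol z)\overline{z_j}=0$, that the nonlinear contributions to the time derivative of the localised mass density cancel in the weighted sum, i.e.
\[
\mathcal J_{\varphi_R}(t)=\frac{d}{dt}\,\mathcal V_{\varphi_R}(t),\qquad \mathcal V_{\varphi_R}(t):=\sum_{j=1}^N \frac{a_j s_j}{2}\int\varphi_R\,|u_j(t)|^2\,dx\ge 0 ,
\]
and then to expand
\[
\frac{d}{dt}\mathcal J_{\varphi_R}(t)=c_\ast\big(\mathcal T(\vec u(t))-2\mathcal P(\vec u(t))\big)-\mathcal G_R(t)+\mathrm{err}_R(t)+\mathcal R_R(t),
\]
for a dimensional $c_\ast>0$, where $\mathcal G_R(t)=c_\ast\sum_j b_j s_j\int(2-\partial_r^2\varphi_R)|\partial_r u_j(t)|^2\,dx\ge 0$ by the radial symmetry of $\vec u(t)$, where $\mathrm{err}_R(t)$ collects the localisation errors (supported in $\{|x|\ge R\}$, coming from $\Delta^2\varphi_R$ and from the nonlinear terms weighted by $\Delta\varphi_R-2d$), and where $\mathcal R_R(t)$ is the defect produced by the failure of mass resonance, which vanishes identically precisely when \eqref{cond-mass-reso-GQNLS} holds. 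Using the radial Sobolev embedding on $\{|x|>R\}\subset\R^4$ and Young's inequality, the nonlinear part of $\mathrm{err}_R(t)$ is $\le\eta\sum_j\|\partial_r u_j\|_{L^2(|x|>R)}^2+C_\eta R^{-2}$ with $C_\eta=C_\eta(\mathcal M(\vec u_0))$, so that the first summand is absorbed into $\mathcal G_R(t)$; choosing $\eta$ small and then $R=R(\mathcal M(\vec u_0),\mathcal E(\vec u_0))$ large, and using $c_\ast(\mathcal T-2\mathcal P)\le 2c_\ast\mathcal E(\vec u_0)$ together with the conservation of $\mathcal E$, one reaches $\frac{d}{dt}\mathcal J_{\varphi_R}(t)\le c_\ast\mathcal E(\vec u_0)+\mathcal R_R(t)$.

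If \eqref{cond-mass-reso-GQNLS} holds, then $\mathcal R_R\equiv 0$ and $\frac{d}{dt}\mathcal J_{\varphi_R}(t)\le c_\ast\mathcal E(\vec u_0)<0$ on the whole maximal interval; since $\mathcal J_{\varphi_R}=\frac{d}{dt}\mathcal V_{\varphi_R}$ with $0\le\mathcal V_{\varphi_R}(t)\le C\|\varphi_R\|_{L^\infty}\mathcal M(\vec u_0)$, the convexity/ODE argument of Theorem~\ref{theo-blow-grow-4d} forces $T^*<\infty$, which is the ``moreover'' part. If instead \eqref{cond-non-mass-reso-GQNLS} holds, the defect $\mathcal R_R$ is present, but — exactly as in the analysis behind Theorem~\ref{theo-blow-grow-4d} — I would split it as $\mathcal R_R(t)=\frac{d}{dt}\mathcal N_R(t)+\widetilde{\mathrm{err}}_R(t)$, where $\widetilde{\mathrm{err}}_R$ is again absorbed into $\mathcal G_R$ up to an $O(R^{-2})$ term and where $|\mathcal N_R(t)|$ is controlled, through \eqref{GN-ineq-vec}, by $\mathcal M(\vec u_0)$ and $\mathcal T(\vec u(t))$. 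Setting $\widetilde{\mathcal J}_R:=\mathcal J_{\varphi_R}-\mathcal N_R$, this gives $\frac{d}{dt}\widetilde{\mathcal J}_R(t)\le \tfrac12 c_\ast\mathcal E(\vec u_0)=:-\delta<0$ for $R$ large. If $T^*<\infty$ we are done; otherwise $\widetilde{\mathcal J}_R(t)\le-\delta(t-t_0)$ for $t\ge t_0$, where $t_0$ depends only on $\mathcal M(\vec u_0),\mathcal E(\vec u_0)$ after crudely bounding $\widetilde{\mathcal J}_R(t_0)$, and feeding this back — together with $|\mathcal J_{\varphi_R}(t)|\lesssim R\,\mathcal M(\vec u_0)^{1/2}\,\mathcal T(\vec u(t))^{1/2}$, the control of $\mathcal N_R(t)$, one further integration against the non-negative $\mathcal V_{\varphi_R}$, and the relation $\mathcal P(\vec u(t))=\tfrac12\mathcal T(\vec u(t))-\mathcal E(\vec u_0)+\tfrac12\sum_j c_j\|u_j(t)\|_{L^2}^2$, exactly as in the proof of Theorem~\ref{theo-blow-grow-4d} — yields $\mathcal T(\vec u(t))\ge Ct^2$ for $t\ge t_0$, with $C=C(\mathcal M(\vec u_0),\mathcal E(\vec u_0))>0$. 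The statement for negative times follows by time reversal.

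The hard part will be the non-resonant case: determining the exact algebraic form of the defect $\mathcal R_R$ for the general nonlinearities $f_j$ (this is where \eqref{cond-non-mass-reso-GQNLS} and the structure supplied by (H1)--(H8) enter decisively) and splitting it cleanly into a total time-derivative plus a remainder controllable by the radial gain $\mathcal G_R$, and — above all — extracting from the resulting differential inequality the sharp quadratic rate $\mathcal T(\vec u(t))\gtrsim t^2$ rather than merely a diverging subsequence as in \cite[Theorem~1.2]{IKN-NA}; this refinement is the technical core, already carried out in the proof of Theorem~\ref{theo-blow-grow-4d}, so the bookkeeping can be quoted from there. By contrast, the radial Sobolev estimates for the cubic-type quantities appearing in $\mathrm{err}_R$ and $\widetilde{\mathrm{err}}_R$ are routine in $d=4$.
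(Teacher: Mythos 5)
Your overall skeleton (a refined radial virial estimate with an Ogawa--Tsutsumi-type cut-off, linear decay of the virial momentum converted into $\mathcal T(\vec u(t))\gtrsim t^2$ through $|\,\cdot\,|\lesssim R\,\mathcal M(\vec u_0)^{1/2}\mathcal T(\vec u(t))^{1/2}$, and a concavity argument on the localized mass under resonance) is the one the paper uses via Lemma \ref{lem-viri-est-rad-4d-GQNLS}. However, there is a structural error in where you let the resonance condition enter, and it breaks the non-resonant half of the statement. You choose the weights $b_js_j$ in $\mathcal J_{\varphi_R}$ precisely so that $\mathcal J_{\varphi_R}=\frac{d}{dt}\mathcal V_{\varphi_R}$ holds unconditionally; the price is that $\frac{d}{dt}\mathcal J_{\varphi_R}$ must be computed by substituting $\partial_t u_j=\frac{i}{a_j}\left(b_j\Delta u_j-c_ju_j+f_j\right)$, so every term of the standard virial identity for the $j$-th component acquires the factor $b_js_j/a_j$. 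Unless $b_js_j/a_j$ is independent of $j$ --- which is exactly \eqref{cond-mass-reso-GQNLS} --- the gradient contribution is $\sum_j (b_j^2s_j/a_j)\int\varphi_R''|\partial_r u_j|^2\,dx$ rather than a multiple of $\mathcal T$, and the nonlinear contribution is $\rea\int\Delta\varphi_R\sum_j(b_js_j/a_j)f_j(\vec u)\overline{u}_j\,dx$, which does not reassemble into $\rea\int\Delta\varphi_R\, F(\vec u)\,dx$. Hence your ``defect'' $\mathcal R_R$ consists of full-strength, non-localized kinetic and potential terms with mismatched $j$-dependent coefficients: the quantity $c_\ast(\mathcal T-2\mathcal P)$ that you want to bound by $2c_\ast\mathcal E(\vec u_0)$ is simply not what the computation produces, the mismatch is controlled by no conserved quantity, and there is no basis for the claimed splitting $\mathcal R_R=\frac{d}{dt}\mathcal N_R+\widetilde{\mathrm{err}}_R$ with $\widetilde{\mathrm{err}}_R$ absorbable into $\mathcal G_R$. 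Nothing of this sort occurs ``in the analysis behind Theorem \ref{theo-blow-grow-4d}'': there the virial momentum carries the weights $a_j$ (i.e.\ $(1,1)$ for \eqref{SNLS}), for which the virial identity holds for every $\kappa>0$ with no defect at all.

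The repair is to invert your architecture, as the paper does. Work with $\mathcal M_{\varphi_R}$ as in \eqref{defi-M-varphi-GQNLS}; Lemma \ref{lem-viri-est-rad-4d-GQNLS} gives, with no resonance assumption, $\frac{d}{dt}\mathcal M_{\varphi_R}(t)\le 16\mathcal E(\vec u_0)-4\int\left(\theta_{1,R}-CR^{-3/2}(\theta_{2,R})^2\right)\left(\sum_ja_j|\nabla u_j|^2\right)dx+o_R(1)$, the Ogawa--Tsutsumi cut-off of \eqref{defi-chi-app} makes the middle term nonpositive, and then $\frac{d}{dt}\mathcal M_{\varphi_R}\le 8\mathcal E(\vec u_0)<0$ together with $|\mathcal M_{\varphi_R}(t)|\lesssim\sqrt{\mathcal T(\vec u(t))}$ yields $\mathcal T(\vec u(t))\ge Ct^2$ whenever $T^*=\infty$; this is the entire non-resonant statement. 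The resonance condition is used only afterwards: under \eqref{cond-mass-reso-GQNLS} one may take $s_j=a_j/(2b_j)$, so $b_js_j=a_j/2$ and your identity becomes $\mathcal M_{\varphi_R}=4\frac{d}{dt}\mathcal V_{\varphi_R}$ with $\mathcal V_{\varphi_R}\ge0$; concavity then excludes $T^*=\infty$. (Your resonant-case argument is correct and coincides with this.) A smaller point: in the mass-critical case the crude absorption ``$\le\eta\sum_j\|\partial_ru_j\|^2_{L^2(|x|>R)}+C_\eta R^{-2}$'' cannot be fed into $\mathcal G_R$, because $2-\varphi_R''$ vanishes to second order at $r=R$; one genuinely needs the weighted error $CR^{-3/2}\int(\theta_{2,R})^2|\nabla u_j|^2dx$ and the pointwise inequality $\theta_{1,R}\ge CR^{-3/2}(\theta_{2,R})^2$ built into the cut-off, as in the proof of Theorem \ref{theo-blow-grow-4d}.
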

The proof of this result follows from a similar argument as that for Theorem \ref{theo-blow-grow-4d} using a refined localized virial estimates (see Lemma \ref{lem-viri-est-rad-4d-GQNLS}). Our result is new even under the mass-resonance condition \eqref{cond-mass-reso-GQNLS}. Note that the finite time blow-up for \eqref{GQNLS} in the mass-critical case $d=4$ was proved in \cite[Theorem 5.11]{NP-CCM} only for finite variance solutions.

\begin{theorem} \label{theo-blow-grow-4d-Sigma-SQNLS}
	Let $d=4$ and assume that \eqref{cond-non-mass-reso-GQNLS} holds. Let $\vec u_0 = (u_{0,1}, \dots, u_{0,N}) \in \Sigma_4 \times \dots \times \Sigma_4$ satisfy $\mathcal E(\vec u_0)<0$. Then the corresponding solution to \eqref{GQNLS} either blows-up forward in finite time, i.e. $T^*<\infty$, or $T^*=\infty$ and there exists a time sequence $t_n \rightarrow \infty$ such that $\|(u_1(t_n), \dots, u_N(t_n))\|_{H^1 \times \dots \times H^1} \rightarrow \infty$ as $n\rightarrow \infty$. If we assume \eqref{cond-mass-reso-GQNLS} instead of \eqref{cond-non-mass-reso-GQNLS}, then either $T^* <\infty$ or $T^*=\infty$  and  $\|(\partial_4 u_1(t_n), \dots, \partial_4 u_N(t_n))\|_{L^2\times \dots \times L^2} \rightarrow \infty$ for some diverging time sequence $t_n\rightarrow \infty.$ 
\end{theorem}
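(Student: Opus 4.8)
The plan is to run, for the general system \eqref{GQNLS} in the mass-critical dimension $d=4$, the same first-order localized virial scheme as for \eqref{SNLS} in Theorem \ref{theo-blow-grow-Sigma-4d}: one uses an \emph{exact} quadratic weight in the last variable --- dictated by the absence of any Strauss-type gain in $x_4$, which is precisely why the finite-variance hypothesis $u_{0,j}\in\Sigma_4$ is imposed --- together with a bounded-derivative radial cut-off of $|y|^2$ in the transverse variable $y\in\R^3$. Fix $\vec u_0\in\Sigma_4\times\cdots\times\Sigma_4$ with $\mathcal E(\vec u_0)<0$ and let $\vec u=(u_1,\dots,u_N)$ be the maximal forward solution on $[0,T^*)$. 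If $T^*<\infty$ we are in the first alternative, so assume $T^*=\infty$. Since the negation of ``$\|(u_1(t_n),\dots,u_N(t_n))\|_{H^1\times\cdots\times H^1}\to\infty$ along some $t_n\to\infty$'' is exactly that $\mathcal T(\vec u(t))$ stays bounded on $[0,\infty)$, and similarly under \eqref{cond-mass-reso-GQNLS} the negation of the $\partial_4$-grow-up is that $\|(\partial_4 u_1(t),\dots,\partial_4 u_N(t))\|_{L^2\times\cdots\times L^2}$ stays bounded, it suffices to exclude these boundedness scenarios. A preliminary step --- the standard truncation/regularization argument --- shows that $x_4 u_j(t)\in L^2$ for $t\in[0,T^*)$ and that the virial functionals below are $C^1$ in time with the expected derivatives.

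Choose $\theta:[0,\infty)\to[0,\infty)$ smooth with $\theta(s)=s^2$ for $0\le s\le1$, $\theta''(s)\le2$ and $\theta'(s)/s\le2$ for all $s$, and $\theta'(s)=0$ for $s$ large; for $R\gg1$ put $\varphi_R(x):=x_4^2+R^2\theta(|y|/R)\ge0$, $x=(y,x_4)$, and set
\[
z_R(t):=\sum_{j=1}^N\frac{a_js_j}{2}\int_{\R^4}\varphi_R(x)\,|u_j(t,x)|^2\,dx\ \ge\ 0,\qquad \mathcal M_R(t):=z_R'(t)=\sum_{j=1}^N s_j b_j\,\ima\int_{\R^4}\overline{u_j(t)}\,\nabla\varphi_R\cdot\nabla u_j(t)\,dx.
\]
Differentiating once more along \eqref{GQNLS} via the refined localized virial identity (the anisotropic analogue of Lemma \ref{lem-viri-est-rad-4d-GQNLS}): the $x_4$-block of $\nabla^2\varphi_R$ is the exact value $2$; the transverse Hessian terms are $\le$ the exact ones because $\theta''\le2$ and $\theta'/s\le2$; the bi-Laplacian contributes $O(R^{-2}\mathcal M(\vec u_0))$, as $\Delta^2\varphi_R$ is supported in $\{|y|>R\}$ with size $O(R^{-2})$; and, crucially because $d=4$ is mass-critical, the remaining exact-weight kinetic and potential terms recombine into a negative multiple of the conserved energy. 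The only surviving error is nonlinear and supported in $\{|y|>R\}$; estimating $\int_{|y|>R}|\vec u|^3$ by the partially-radial Sobolev inequality $\|f\|_{L^\infty(|y|>R)}\lesssim R^{-1}\|f\|_{L^2(|y|>R)}^{1/2}\|\nabla_y f\|_{L^2(|y|>R)}^{1/2}$ in $\R^3$ together with a one-dimensional Sobolev embedding in $x_4$ produces a bound of the form $R^{-1}(1+\mathcal T(\vec u(t)))^{1/2}$. Altogether, with some $c_0>0$,
\[
z_R''(t)\ \le\ c_0\,\mathcal E(\vec u_0)+\mathcal A_R(t),\qquad \mathcal A_R(t)\le C\bigl(R^{-2}\mathcal M(\vec u_0)+R^{-1}(1+\mathcal T(\vec u(t)))^{1/2}\bigr)+\mathcal B_R(t),
\]
where $\mathcal B_R(t)$ collects the mass-ratio mismatch terms; under \eqref{cond-non-mass-reso-GQNLS} these are controlled by the bounded kinetic energy exactly as for \eqref{SNLS} with $\kappa\neq\tfrac12$, while under \eqref{cond-mass-reso-GQNLS} they cancel and, following the scalar case, one may arrange that only $\|(\partial_4 u_1(t),\dots,\partial_4 u_N(t))\|_{L^2\times\cdots\times L^2}$ (besides conserved quantities) enters the bound for $\mathcal A_R$.

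Now conclude by contradiction: assume $T^*=\infty$ and $\mathcal T(\vec u(t))\le K<\infty$ for all $t\ge0$ (respectively $\|(\partial_4 u_j(t))_{j}\|_{L^2}^2\le K$ under \eqref{cond-mass-reso-GQNLS}). Then $\mathcal A_R(t)\le\eta_R$ with $\eta_R\to0$ as $R\to\infty$; fix $R$ with $\eta_R\le-\tfrac12 c_0\mathcal E(\vec u_0)$, so that $z_R''(t)\le\tfrac12 c_0\mathcal E(\vec u_0)<0$ for all $t\ge0$. Integrating twice, $z_R(t)\le z_R(0)+\mathcal M_R(0)\,t+\tfrac14 c_0\mathcal E(\vec u_0)\,t^2\to-\infty$, contradicting $z_R(t)\ge0$. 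Hence $\mathcal T(\vec u(t))$ (respectively $\|(\partial_4 u_j(t))_j\|_{L^2}$) is unbounded, giving $t_n\to\infty$ along which $\|(u_1(t_n),\dots,u_N(t_n))\|_{H^1\times\cdots\times H^1}\to\infty$ (respectively $\|(\partial_4 u_1(t_n),\dots,\partial_4 u_N(t_n))\|_{L^2\times\cdots\times L^2}\to\infty$); the negative-time statement follows by the same argument on $(-\infty,0]$. Note that, unlike in the radial Theorem \ref{theo-blow-grow-4d-GQNLS}, here $\mathcal M_R(t)$ is \emph{not} bounded uniformly in $t$ --- the exact weight $x_4^2$ only yields $|\mathcal M_R(t)|\lesssim_K 1+t$ via $\frac{d}{dt}\|x_4 u_j(t)\|_{L^2}^2\lesssim\|x_4 u_j(t)\|_{L^2}\|\partial_4 u_j(t)\|_{L^2}$ --- so the quantitative rate $\mathcal T(\vec u(t))\gtrsim t^2$ is lost and only sequential grow-up survives.

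I expect the main obstacle to be the construction of the cut-off $\theta$ and the partially-radial (Strauss-type) estimates that make $\mathcal A_R(t)$ genuinely $o_R(1)$ under the relevant boundedness hypothesis: one must check that the power of $\mathcal T(\vec u(t))$ appearing is strictly sublinear (where mass-criticality $d=4$ is used) and, in the non-resonant case, that the mismatch contributions $\mathcal B_R(t)$ are dominated by the bounded kinetic energy, just as for \eqref{SNLS} with $\kappa\neq\tfrac12$ in Theorem \ref{theo-blow-grow-4d} --- this is also why finite-time blow-up cannot be deduced there. Under \eqref{cond-mass-reso-GQNLS} these obstructive terms are absent, and the only remaining delicate point is the bookkeeping that lets the conclusion be phrased in terms of $\partial_4$ alone, mirroring Theorem \ref{theo-blow-grow-Sigma-4d}.
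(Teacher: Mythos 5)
Your treatment of the mass-resonance case is essentially the paper's: under \eqref{cond-mass-reso-GQNLS} the choice $s_j\propto a_j/b_j$ is admissible, so your $z_R(t)=\sum_j\tfrac{a_js_j}{2}\int\varphi_R|u_j|^2\,dx$ satisfies $z_R'(t)=\mathrm{const}\cdot\mathcal M_{\varphi_R}(t)$ with $\mathcal M_{\varphi_R}$ exactly the functional \eqref{defi-M-varphi-GQNLS} controlled by Lemma \ref{lem-viri-est-cyli-4d-GQNLS}, and the concavity of the nonnegative quantity $z_R$ gives the contradiction, as in the $\kappa=\tfrac12$ part of Theorem \ref{theo-blow-grow-Sigma-4d}. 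That half is correct.

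The non-mass-resonance half has a genuine gap. Your scheme needs simultaneously (a) $z_R'(t)=\mathcal M_R(t)$ with $\mathcal M_R(t)=\sum_js_jb_j\ima\int\overline{u_j}\,\nabla\varphi_R\cdot\nabla u_j\,dx$, which forces the weights $s_jb_j$ (and requires $\ima\sum_js_jf_j(\boldsymbol z)\bar z_j=0$ to kill the nonlinear contribution to $\tfrac{d}{dt}z_R$), and (b) the virial estimate of Lemma \ref{lem-viri-est-cyli-4d-GQNLS}, which is an estimate on $\tfrac{d}{dt}\mathcal M_{\varphi_R}$ with weights $a_j$: the kinetic and potential terms recombine into $16\mathcal E(\vec u(t))$ only for that choice of weights. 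The two functionals are proportional iff $s_jb_j\propto a_j$, i.e. iff $s_j\propto a_j/b_j$ is admissible, which is precisely \eqref{cond-mass-reso-GQNLS}. Under \eqref{cond-non-mass-reso-GQNLS} your $\mathcal B_R(t)$ is therefore not a harmless remainder: it consists of uncancelled cubic terms carrying the weight $\nabla\varphi_R$, which is of size $R$ in the transverse variable and genuinely unbounded ($\sim x_4$) in the last one; such terms are neither $o_R(1)$ nor dominated by the bounded kinetic energy, and they grow rather than shrink as $R\to\infty$, so they cannot be absorbed into $c_0\mathcal E(\vec u_0)<0$. There is also no argument of this type in the paper's treatment of $\kappa\neq\tfrac12$ to which you could appeal: the paper never forms a second time-derivative in the non-resonant case. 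Its actual route (proof of Theorem \ref{theo-blow-grow-Sigma-4d}, transported verbatim to \eqref{GQNLS} via Lemma \ref{lem-viri-est-cyli-4d-GQNLS}) is first order in $\mathcal M_{\varphi_R}$ alone: the cut-off construction and the boundedness hypothesis give $\tfrac{d}{dt}\mathcal M_{\varphi_R}(t)\le4\mathcal E(\vec u_0)<0$, hence $\mathcal M_{\varphi_R}(t)\lesssim -|\mathcal E(\vec u_0)|\,t$ for large $t$, which is then contradicted by the a priori upper bound on $|\mathcal M_{\varphi_R}(t)|$ coming from Cauchy--Schwarz and the assumed bound on $\mathcal T(\vec u(t))$. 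Your closing remark that the $x_4$-component of $\mathcal M_{\varphi_R}$ is only controlled by $\|x_4u_j(t)\|_{L^2}\|\partial_4u_j(t)\|_{L^2}$, hence a priori $O(1+t)$, is a legitimate concern about that final comparison; but replacing the first-order argument by the concavity argument does not resolve it in the non-resonant case, for the weight-mismatch reason above. You need to rework this case along the paper's first-order lines (or supply a proof that $\mathcal B_R$ is actually controllable, which I do not see).
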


 Similarly to Theorem \ref{theo-blow-grow-Sigma-4d}, the proof of Theorem \ref{theo-blow-grow-4d-Sigma-SQNLS} is based on refined localized virial estimates for anisotropic solutions to \eqref{GQNLS} (see Lemma \ref{lem-viri-est-cyli-4d-GQNLS}). Therefore, we will omit the details of the proof.\\

The paper is organized as follows. In Section \ref{S3}, we recall some useful properties of ground states related to \eqref{ground-1} and \eqref{ground-2}. We also prove some variational estimates associated to blow-up conditions given in Theorems \ref{theo-blow-5d} and \ref{theo-blow-6d}. Section \ref{S4} is devoted to various localized virial estimates for radial and anisotropic solutions to \eqref{SNLS}. The proofs of our main results are  given in Section \ref{S5}. Finally, we prove in Appendix \ref{S6} some localized virial estimates for the general system  \eqref{GQNLS} of NLS with quadratic interactions.

 	\section{Variational analysis}
 	\label{S3}
 	\setcounter{equation}{0}
In this section, we report some useful properties of ground states related to \eqref{ground-1} and \eqref{ground-2}.  Then we use them to get some a-priori uniform-in-time estimates for the Pohozaev functional evaluated at the solutions to the corresponding time-dependent equations. 
	\subsection{Variational inequalities}
	We first recall the following Gagliardo-Nirenberg type inequalities due to \cite{HOT} (see also \cite{NP-DPDE}): for $1\leq d\leq 5$,
	\begin{align} \label{GN-ineq}
	P(f,g) \leq C_{\GN} [M(f,g)]^{\frac{6-d}{4}} [T(f,g)]^{\frac{d}{4}}, \quad (f,g) \in H^1 \times H^1.
	\end{align}
	The optimal constant in \eqref{GN-ineq} is attained by any ground state $(\phi,\psi)$ related to \eqref{ground-1}, i.e.
	\[
	C_{\GN} =\frac{P(\phi,\psi)}{[M(\phi,\psi)]^{\frac{6-d}{4}} [T(\phi,\psi)]^{\frac{d}{4}}}.
	\]
	This result was first shown by Hayashi, Ozawa, and Tanaka \cite[Theorem 5.1]{HOT} (for $d=4$), and recently by Noguera and Pastor \cite[Corollary 2.10]{NP-DPDE} (for $1\leq d\leq 5$). We also have the following Pohozaev's identity:
	\begin{align} \label{poho-iden}
	M(\phi,\psi) = \frac{6-d}{d} T(\phi,\psi) = \frac{6-d}{2} P(\phi,\psi).
	\end{align}
	It follows that
	\[
	C_{\GN} = \frac{2}{d [M(\phi,\psi)]^{\frac{6-d}{4}} [T(\phi,\psi)]^{\frac{d-4}{4}}}.
	\]
	When $d=4$, we have
	\begin{align} \label{opti-cons-4d}
	C_{\GN} = \frac{1}{2} [M(\phi,\psi)]^{-\frac{1}{2}}.
	\end{align}
	Although the uniqueness (up to symmetries) of ground states related to \eqref{ground-1} is not known yet, \eqref{opti-cons-4d} shows that the mass of ground states does not depend on the choice of a ground state $(\phi,\psi)$. \\
	
	In the case $d=5$, we have
	\begin{align} \label{opti-cons-5d}
	C_{\GN} = \frac{2}{5} [M(\phi,\psi) T(\phi,\psi)]^{-\frac{1}{4}}
	\end{align}
	and
	\begin{align} \label{E-M-5d}
	E(\phi,\psi) = \frac{1}{10}  T(\phi,\psi) = \frac{1}{4} P(\phi,\psi).
	\end{align}
	In particular, the quantities
	\begin{align} \label{inva-quan-5d}
	E(\phi,\psi) M(\phi,\psi), \quad T(\phi,\psi) M(\phi,\psi), \quad P(\phi,\psi) M(\phi,\psi)
	\end{align}
	do not depend on the choice of a ground state $(\phi,\psi)$.\\
	
	When $d=6$, we have the following Sobolev type inequality:
	\begin{align} \label{Sobo-ineq}
	P(f,g) \leq C_{\Sob} [T(f,g)]^{\frac{3}{2}}, \quad (f,g) \in \dot{H}^1 \times \dot{H}^1.
	\end{align}
	It was shown in \cite[Theorem 3.3]{NP-blow} that the sharp constant in \eqref{Sobo-ineq} is achieved by a ground state $(\phi,\psi)$ related to \eqref{ground-2}, i.e.
	\[
	C_{\Sob} = \frac{P(\phi,\psi)}{ [T(\phi,\psi)]^{\frac{3}{2}}}.
	\]
	Using the following identity
	\[
	T(\phi,\psi) = 3 P(\phi,\psi),
	\]
	we see that
	\begin{align} \label{shap-cons-6d}
	C_{\Sob} = \frac{1}{3} [T(\phi,\psi)]^{-\frac{1}{2}}
	\end{align}
	and
	\begin{align} \label{E-K-6d}
	E(\phi,\psi) = \frac{1}{6} T(\phi,\psi).
	\end{align}
	This shows in particular that $E(\phi,\psi)$ and $T(\phi,\psi)$ do not depend on the choice of a ground state $(\phi,\psi)$.
	
	\subsection{Variational estimates}
	In this section, we characterize the blow-up region defined in \eqref{cond-blow} (see \eqref{cond-blow-6d} for the energy critical case) in terms of the sign of the Pohozaev functional $G$ defined in \eqref{defi-G}. For similar analysis in the context of the classical NLS equation, we refer to our previous works \cite{BF19, DFH}.
	
	\begin{lemma} \label{lem-equi-5d}
	Let $d=5$, $\kappa>0$, and $(\phi,\psi)$ be a ground state related \eqref{ground-1}. Denote
	\begin{align} \label{A}
	\Ac:= \left\{ (f,g) \in H^1\times H^1\ \hbox{ s. t. }
	\begin{array}{rcl}
	E(f,g) M(f,g) &<& E(\phi,\psi) M(\phi,\psi) \\
	T(f,g) M(f,g) &>& T(\phi,\psi) M(\phi,\psi)
	\end{array}
	\right\}
	\end{align}
	and
	\begin{align} \label{tilde-A}
	\tilde{\Ac}:= \left\{ 
	(f,g) \in H^1\times H^1 \ \hbox{ s. t. } 
	\begin{array}{rcl}
	E(f,g) M(f,g) &<& E(\phi,\psi) M(\phi,\psi) \\
	G(f,g) &<& 0
	\end{array}
	\right\}.
	\end{align}
	Then $\Ac \equiv \tilde{\Ac}$.	
	\end{lemma}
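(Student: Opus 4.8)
The plan is to reduce the claimed set equality to a one–variable inequality governed by the sharp Gagliardo--Nirenberg inequality \eqref{GN-ineq}. Throughout, for a generic pair $(f,g)\in H^1\times H^1$ I abbreviate $T=T(f,g)$, $M=M(f,g)$, $E=E(f,g)$, $P=P(f,g)$, and I may assume $M>0$, since $M=0$ forces $f=g\equiv0$ and then $(f,g)$ lies in neither $\Ac$ nor $\tilde\Ac$. The first step is purely algebraic: for $d=5$, eliminating $P$ between $E=\tfrac12 T-P$ and $G(f,g)=T-\tfrac52 P$ gives $G(f,g)=-\tfrac14 T+\tfrac52 E$, and multiplying by $M>0$ yields the reformulation
\[
G(f,g)<0 \iff E(f,g)\,M(f,g)<\tfrac1{10}\,T(f,g)\,M(f,g).
\]
Since a ground state satisfies $E(\phi,\psi)M(\phi,\psi)=\tfrac1{10}T(\phi,\psi)M(\phi,\psi)$ by \eqref{E-M-5d}, the inclusion $\Ac\subseteq\tilde\Ac$ is immediate: if $EM<E(\phi,\psi)M(\phi,\psi)$ and $TM>T(\phi,\psi)M(\phi,\psi)$, then $EM<\tfrac1{10}T(\phi,\psi)M(\phi,\psi)<\tfrac1{10}TM$, i.e. $G(f,g)<0$.

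For the reverse inclusion $\tilde\Ac\subseteq\Ac$, the mass--energy condition is common to both sets, so it suffices to show that $G(f,g)<0$ forces $TM>T(\phi,\psi)M(\phi,\psi)$. Applying \eqref{GN-ineq}, which for $d=5$ reads $P\le C_{\GN}M^{1/4}T^{5/4}$, gives
\[
EM=\Bigl(\tfrac12 T-P\Bigr)M\ge \tfrac12 TM-C_{\GN}(TM)^{5/4}.
\]
Write $x:=TM$, and note $x>0$ since $G(f,g)<0$ forces $T>0$ (if $T=0$ then $f,g$ are constant, hence zero, hence $G(f,g)=0$). Combining the last display with the reformulation $EM<\tfrac1{10}x$ from the first step yields $\tfrac12 x-C_{\GN}x^{5/4}<\tfrac1{10}x$, hence $\tfrac25<C_{\GN}x^{1/4}$, that is $x>\bigl(\tfrac{2}{5C_{\GN}}\bigr)^4$. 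By the value \eqref{opti-cons-5d} of the sharp constant, $\bigl(\tfrac{2}{5C_{\GN}}\bigr)^4=M(\phi,\psi)T(\phi,\psi)$, so $TM>T(\phi,\psi)M(\phi,\psi)$ and $(f,g)\in\Ac$.

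I do not anticipate a genuine obstacle: the whole argument rests on the sharp Gagliardo--Nirenberg inequality and the explicit form of its optimal constant, which is exactly what pins the threshold $\bigl(\tfrac{2}{5C_{\GN}}\bigr)^4$ to the ground-state invariant $M(\phi,\psi)T(\phi,\psi)$; the shape of the one-variable function $x\mapsto\tfrac12 x-C_{\GN}x^{5/4}$ (a single interior maximum, with critical value matching $E(\phi,\psi)M(\phi,\psi)$ by \eqref{E-M-5d}) does the rest. The twin statement in the energy-critical case (Lemma \ref{lem-equi-6d}) should follow by the same scheme, using the Sobolev inequality \eqref{Sobo-ineq} and \eqref{shap-cons-6d} in place of \eqref{GN-ineq} and \eqref{opti-cons-5d}, and dropping the factors of $M$ since the conditions \eqref{cond-blow-6d} do not involve the mass.
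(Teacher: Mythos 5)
Your proposal is correct and follows essentially the same route as the paper: the inclusion $\Ac\subseteq\tilde\Ac$ is the identity $G=\tfrac52 E-\tfrac14 T$ combined with the Pohozaev relation \eqref{E-M-5d}, and the reverse inclusion combines $G(f,g)<0$ with the sharp Gagliardo--Nirenberg inequality \eqref{GN-ineq} and the explicit constant \eqref{opti-cons-5d} to force $T(f,g)M(f,g)>\bigl(\tfrac{2}{5C_{\GN}}\bigr)^4=T(\phi,\psi)M(\phi,\psi)$. The only cosmetic difference is that in the second inclusion you pass through the energy ($EM<\tfrac1{10}TM$) whereas the paper applies \eqref{GN-ineq} directly to $T<\tfrac52 P$; the two are algebraically equivalent.
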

	
	\begin{proof}
		Let $(f,g) \in \Ac$. We will show that $G(f,g) <0$, hence $(f,g) \in \tilde{\Ac}$. We have
		\begin{align*}
		G(f,g) M(f,g) = \left(\frac{5}{2} E(f,g) - \frac{1}{4} T(f,g) \right) M(f,g) <\frac{5}{2} E(\phi,\psi) M(\phi,\psi) - \frac{1}{4} T(\phi,\psi) M(\phi,\psi),
		\end{align*}
		hence $G(f,g)<0$ by using \eqref{E-M-5d}.\\
		
\noindent 		Now let $(f,g) \in \tilde{\Ac}$. We will show that $T(f,g) M(f,g) > T(\phi,\psi) M(\phi,\psi)$, so $(f,g) \in \Ac$. Indeed, as $G(f,g) <0$, we use  \eqref{GN-ineq} to have
		\begin{align*}
		T(f,g) <\frac{5}{2} P(f,g) \leq \frac{5}{2} C_{\GN} [M(f,g)]^{\frac{1}{4}} [T(f,g)]^{\frac{5}{4}}.
		\end{align*}
		In particular, we have
		\[
		\left(T(f,g) M(f,g)\right)^{\frac{1}{4}} >\frac{2}{5} C_{\GN}^{-1} 
		\]
		which, by \eqref{opti-cons-5d}, implies that
		\[
		T(f,g) M(f,g) > T(\phi,\psi) M(\phi,\psi).
		\]
		The proof is complete.
	\end{proof}
	
	\begin{lemma} \label{lem-uppe-boun-5d}
		Let $d=5$, $\kappa >0$, and $(\phi,\psi)$ be a ground state related to \eqref{ground-1}. Let $(u_0,v_0) \in H^1 \times H^1$ satisfy \eqref{cond-blow}. Let $(u,v)$ be the corresponding solution to \eqref{SNLS} defined on the maximal forward time interval $[0,T^*)$. Then there exist positive constants $\vareps$ and $c$ such that
		\begin{align} \label{uppe-boun-5d}
		G(u(t),v(t)) + \vareps T(u(t),v(t)) \leq -c 
		\end{align}
		for all $t\in [0,T^*)$.
	\end{lemma}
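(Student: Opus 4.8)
The plan is to exploit the coercivity that comes from being strictly inside the blow-up region $\Ac \equiv \tilde\Ac$ of Lemma~\ref{lem-equi-5d}, together with the conservation of mass and energy. First I would fix notation: write $m_0 := M(u_0,v_0) = M(u(t),v(t))$ and $e_0 := E(u_0,v_0) = E(u(t),v(t))$ for all $t \in [0,T^*)$, which are constant by the conservation laws for \eqref{SNLS}. By hypothesis \eqref{cond-blow} we have $e_0 m_0 < E(\phi,\psi) M(\phi,\psi)$, so there is $\delta>0$ with
\[
e_0 m_0 \le E(\phi,\psi) M(\phi,\psi) - \delta.
\]
Using $E = \tfrac12 T - P$ and the definition $G = T - \tfrac{d}{2} P = T - \tfrac52 P$ (for $d=5$), one computes the algebraic identity
\[
G(f,g) = \tfrac52 E(f,g) - \tfrac14 T(f,g),
\]
valid pointwise; evaluating at $(u(t),v(t))$ and multiplying by the constant $m_0>0$ gives
\[
G(u(t),v(t))\, m_0 = \tfrac52 e_0 m_0 - \tfrac14 T(u(t),v(t))\, m_0.
\]

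Next I would insert a small multiple of $T$. For a parameter $\vareps>0$ to be chosen, multiply \eqref{uppe-boun-5d} (the quantity we want to bound) by $m_0$:
\[
\big(G(u(t),v(t)) + \vareps T(u(t),v(t))\big) m_0 = \tfrac52 e_0 m_0 - \Big(\tfrac14 - \vareps\Big) T(u(t),v(t))\, m_0.
\]
Now the key point is a uniform lower bound on $T(u(t),v(t)) m_0$: since $(u(t),v(t)) \in \Ac$ for all $t$ (the region is invariant, which follows from Lemma~\ref{lem-equi-5d} combined with the continuity of $t \mapsto (u(t),v(t))$ in $H^1\times H^1$, conservation of $E M$, and the fact that the complement of the closed set $\{TM \ge T(\phi,\psi)M(\phi,\psi)\}$ cannot be reached without crossing $G = 0$, contradicting $G<0$ there), we have
\[
T(u(t),v(t))\, m_0 > T(\phi,\psi) M(\phi,\psi).
\]
Actually a quantitative version is needed: I would show there is $\eta>0$, depending only on $\delta$, $m_0$, $e_0$, such that $T(u(t),v(t)) m_0 \ge (1+\eta) T(\phi,\psi) M(\phi,\psi)$ for all $t$. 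This is the standard ``stability of the blow-up region'' argument: if $T(u(t),v(t)) m_0$ were arbitrarily close to $T(\phi,\psi)M(\phi,\psi)$ from above at some time, then using \eqref{GN-ineq}, \eqref{opti-cons-5d} and \eqref{E-M-5d} one gets $G(u(t),v(t)) \ge 0$ in the limit, contradicting $G<0$ (which holds by Lemma~\ref{lem-equi-5d}); a compactness/continuity argument upgrades this to a uniform gap $\eta$.

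Finally, choosing $\vareps := \tfrac18 < \tfrac14$, we obtain
\[
\big(G(u(t),v(t)) + \vareps T(u(t),v(t))\big) m_0 \le \tfrac52 e_0 m_0 - \tfrac18 (1+\eta) T(\phi,\psi) M(\phi,\psi).
\]
Using \eqref{E-M-5d}, i.e. $E(\phi,\psi) = \tfrac{1}{10} T(\phi,\psi)$, so $\tfrac52 E(\phi,\psi)M(\phi,\psi) = \tfrac14 T(\phi,\psi)M(\phi,\psi)$, and $e_0 m_0 \le E(\phi,\psi)M(\phi,\psi) - \delta$, the right-hand side is at most
\[
\tfrac14 T(\phi,\psi)M(\phi,\psi) - \tfrac52\delta - \tfrac18(1+\eta) T(\phi,\psi)M(\phi,\psi)
= \Big(\tfrac18 - \tfrac{\eta}{8}\Big) T(\phi,\psi)M(\phi,\psi) - \tfrac52\delta,
\]
hmm, this particular bookkeeping needs $\eta$ large enough or a smaller $\vareps$; more carefully, since $T(u(t),v(t))m_0 \ge (1+\eta)T(\phi,\psi)M(\phi,\psi)$ and also $\tfrac52 e_0 m_0 < \tfrac14 T(\phi,\psi)M(\phi,\psi) - \tfrac52 \delta$, picking $\vareps>0$ with $(\tfrac14 - \vareps)(1+\eta) > \tfrac14$, i.e. $\vareps < \tfrac{\eta}{4(1+\eta)}$, forces the right-hand side to be bounded above by $-\tfrac52\delta < 0$. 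Dividing by $m_0$ yields \eqref{uppe-boun-5d} with $c := \tfrac{5\delta}{2m_0} > 0$ and the chosen $\vareps$, uniformly in $t \in [0,T^*)$.

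\medskip

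\emph{Main obstacle.} The delicate step is establishing the \emph{uniform} gap $T(u(t),v(t)) M(u(t),v(t)) \ge (1+\eta) T(\phi,\psi)M(\phi,\psi)$ rather than just the strict inequality $>$ supplied by Lemma~\ref{lem-equi-5d}; this requires a quantitative refinement of the variational argument in Lemma~\ref{lem-equi-5d}, tracking how the defect $\delta$ in the energy inequality propagates to a defect in the kinetic-energy inequality via the sharp Gagliardo--Nirenberg inequality \eqref{GN-ineq} and the Pohozaev relations \eqref{poho-iden}, \eqref{E-M-5d}, \eqref{opti-cons-5d}. Everything else is elementary algebra with the conserved quantities.
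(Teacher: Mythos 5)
Your overall strategy is the same as the paper's: rewrite $G=\tfrac52 E-\tfrac14 T$ for $d=5$, use the conserved quantities, extract a strict gap $\delta$ (the paper's $\rho$) from the first condition in \eqref{cond-blow}, upgrade the strict inequality $TM>T(\phi,\psi)M(\phi,\psi)$ to a quantitative one $TM\geq(1+\eta)T(\phi,\psi)M(\phi,\psi)$, and then choose $\vareps$ small (your final condition $\vareps<\tfrac{\eta}{4(1+\eta)}$ matches the paper's $\vareps<\tfrac{\rho+\nu}{4(1+\nu)}$, and your constant $c=\tfrac{5\delta}{2m_0}$ is fine). The one step you leave genuinely unfinished is the uniform gap $\eta$, and the justification you sketch for it in the body of the proof does not work as stated: arguing that $T(u(t),v(t))m_0$ close to $T(\phi,\psi)M(\phi,\psi)$ forces ``$G\geq 0$ in the limit, contradicting $G<0$'' is circular, because the pointwise inequality $G(u(t),v(t))<0$ from Lemma~\ref{lem-equi-5d} is perfectly compatible with $G(u(t_n),v(t_n))\to 0^-$; a uniform negative bound on $G$ is exactly what the lemma is trying to establish. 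Moreover no compactness is available (the time interval $[0,T^*)$ need not be compact) and none is needed.

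The correct source of the gap — which you do gesture at in your closing ``main obstacle'' paragraph — is the \emph{energy} defect, not the sign of $G$. The paper's argument: with $F(\lambda)=\tfrac12\lambda-C_{\GN}\lambda^{5/4}$, the sharp Gagliardo--Nirenberg inequality \eqref{GN-ineq} gives $F\bigl(T(u(t),v(t))M(u(t),v(t))\bigr)\leq E(u_0,v_0)M(u_0,v_0)\leq(1-\rho)E(\phi,\psi)M(\phi,\psi)=(1-\rho)F\bigl(T(\phi,\psi)M(\phi,\psi)\bigr)$; since by \eqref{opti-cons-5d} the point $\lambda^*=T(\phi,\psi)M(\phi,\psi)$ is precisely the maximizer of $F$ and $F$ (equivalently $g(\lambda)=5\lambda-4\lambda^{5/4}$ in normalized variables) is strictly decreasing on $(1,\infty)$ with $g(1)=1$, $g'(1)=0$, the inequality $g(\lambda)\leq 1-\rho$ together with $\lambda>1$ (from the continuity argument) forces $\lambda\geq 1+\nu$ with $\nu=\nu(\rho)>0$. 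This is a purely deterministic, time-uniform consequence of the conserved bound, which is why $\eta$ depends only on $\delta$, $m_0$, $e_0$ as you wanted. If you replace your ``contradiction with $G<0$ plus compactness'' step by this monotonicity argument, your proof coincides with the paper's.
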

	\begin{proof}
In case of negative energy the proof is straightforward: indeed, if $E(u_0,v_0)<0$, the conservation of energy yields
		\[
		G(u(t),v(t)) = \frac{5}{2} E(u(t),v(t)) -\frac{1}{4} T(u(t),v(t)) = \frac{5}{2} E(u_0,v_0) - \frac{1}{4} T(u(t),v(t)).
		\]
		This shows \eqref{uppe-boun-5d} with $\vareps = \frac{1}{4}$ and $c= -\frac{5}{2} E(u_0,v_0)>0$.\\
		
\noindent 	Let us now focus on the case $E(u_0,v_0)\geq 0$. By \eqref{GN-ineq}, we have for all $t\in [0,T^*)$,
		\begin{equation}\label{eq:F}
		\begin{aligned}
		E(u(t),v(t)) M(u(t),v(t)) &= \frac{1}{2} T(u(t),v(t)) M(u(t),v(t)) - P(u(t),v(t)) M(u(t),v(t)) \\
		&\geq \frac{1}{2} T(u(t),v(t)) M(u(t),v(t)) - C_{\GN} \left(T(u(t),v(t)) M(u(t),v(t)) \right)^{\frac{5}{4}} \\
		&= F\left( T(u(t),v(t)) M(u(t),v(t))\right),
		\end{aligned}
		\end{equation}
		where
		\[
		F(\lambda):= \frac{1}{2} \lambda - C_{\GN} \lambda^{\frac{5}{4}}.
		\]
		Using \eqref{opti-cons-5d} and \eqref{E-M-5d}, we see that
		\begin{align*}
		F\left( T(\phi,\psi) M(\phi,\psi) \right) &= \frac{1}{2} T(\phi,\psi) M(\phi,\psi) - C_{\GN} \left(T(\phi,\psi) M(\phi,\psi) \right)^{\frac{5}{4}} \\
		&=\frac{1}{10} T(\phi,\psi) M(\phi,\psi) = E(\phi,\psi) M(\phi,\psi).
		\end{align*}
		Thanks to the first condition in \eqref{cond-blow} and the conservation laws of mass and energy, we can continue the estimate \eqref{eq:F} as
		\begin{equation*}
		\begin{aligned}
		F\left( T(u(t),v(t)) M(u(t),v(t))\right)& \leq E(u_0,v_0) M(u_0,v_0) \\
		&< E(\phi,\psi)M(\phi,\psi)  = F \left( T(\phi,\psi) M(\phi,\psi) \right), \quad \forall\,t\in [0,T^*).
		\end{aligned}
		\end{equation*}
		By the continuity argument and the second condition in \eqref{cond-blow}, we infer that 
		\begin{align} \label{est-solu-5d}
		T(u(t),v(t)) M(u(t),v(t)) > T(\phi,\psi) M(\phi,\psi)
		\end{align}
		for all $t\in [0,T^*)$. Next we use the first condition in \eqref{cond-blow} to pick $\rho:= \rho(u_0,v_0, \phi,\psi)>0$ so that
		\begin{align} \label{defi-rho-5d}
		E(u_0,v_0) M(u_0,v_0) \leq (1-\rho) E(\phi,\psi) M(\phi,\psi).
		\end{align}
		It follows that
		\[
		F\left( T(u(t),v(t)) M(u(t),v(t)) \right) \leq (1-\rho) E(\phi,\psi) M(\phi, \psi).
		\]
		Using the fact that
		\[
		E(\phi,\psi) M(\phi,\psi) = \frac{1}{10} T(\phi,\psi) M(\phi,\psi) = \frac{1}{4}C_{\GN} \left( T(\phi,\psi) M(\phi,\psi)\right)^{\frac{5}{4}},
		\]
		we infer that
		\begin{align} \label{est-rho-5d}
		5 \frac{T(u(t), v(t)) M(u(t),v(t))}{T(\phi,\psi) M(\phi,\psi)} - 4 \left(\frac{T(u(t), v(t)) M(u(t),v(t))}{T(\phi,\psi) M(\phi,\psi)} \right)^{\frac{5}{4}} \leq 1-\rho
		\end{align}
		for all $t\in [0,T^*)$. We consider $g(\lambda):= 5 \lambda -4 \lambda^{\frac{5}{4}}$ for $\lambda>1$. Note that the condition $\lambda>1$ is due to \eqref{est-solu-5d}. We see that $g^\prime(1)=0$ and $g$ is strictly decreasing on $(1,\infty)$. It follows from \eqref{est-rho-5d} that there exists $\nu:= \nu(\rho)>0$ such that $\lambda>1+\nu$. In particular, we have
		\begin{align} \label{refi-est-solu-5d}
		T(u(t),v(t)) M(u(t),v(t)) \geq (1+\nu) T(\phi,\psi) M(\phi,\psi)
		\end{align}
		for all $t\in [0,T^*)$. Now let $\vareps>0$ to be chosen later. By the conservation of mass and energy, \eqref{E-M-5d}, \eqref{defi-rho-5d}, and \eqref{refi-est-solu-5d}, we have for all $t\in [0,T^*)$,
		\begin{align*}
		\Big( G(u(t),v(t)) &+ \vareps T(u(t),v(t)) \Big) M(u(t),v(t)) \\
		&= \frac{5}{2} E(u(t),v(t)) M(u(t),v(t)) - \left(\frac{1}{4}-\vareps\right) T(u(t),v(t)) M(u(t),v(t)) \\
		&=\frac{5}{2} E(u_0,v_0) M(u_0,v_0) -  \left(\frac{1}{4}-\vareps\right) T(u(t),v(t)) M(u(t),v(t)) \\
		&\leq \frac{5}{2}(1-\rho) E(\phi,\psi) M(\phi,\psi) - \left(\frac{1}{4}-\vareps\right) (1+\nu) T(\phi,\psi) M(\phi,\psi) \\
		&= \left( -\frac{1}{4}(\rho+\nu) +\vareps (1+\nu)\right) T(\phi,\psi) M(\phi,\psi).
		\end{align*} 
		By taking $0<\vareps <\frac{\rho+\nu}{4(1+\nu)}$  and using the conservation of mass, we have \eqref{uppe-boun-5d} with 
		\[
		c= \left(\frac{1}{4}(\rho+\nu) - \vareps(1+\nu)\right) T(\phi,\psi) \frac{M(\phi,\psi)}{M(u_0,v_0)}>0.
		\]
		The proof is complete.
	\end{proof}

	\begin{lemma} \label{lem-equi-6d}
		Let $d=6$, $\kappa>0$, and $(\phi,\psi)$ be a ground state related \eqref{ground-2}. Denote
		\begin{align} \label{B}
		\Bc:= \left\{ (f,g) \in H^1\times H^1 \ \hbox{ s. t. } 
		\begin{array}{rcl}
		E(f,g)  &<& E(\phi,\psi) \\
		T(f,g) &>& T(\phi,\psi)
		\end{array}
		\right\}
		\end{align}
		and
		\begin{align} \label{tilde-B}
		\tilde{\Bc}:= \left\{ 
		(f,g) \in H^1\times H^1 \ \hbox{ s. t. } 
		\begin{array}{rcl}
		E(f,g) &<& E(\phi,\psi) \\
		G(f,g) &<& 0
		\end{array}
		\right\}.
		\end{align}
		Then $\Bc \equiv \tilde{\Bc}$.	
	\end{lemma}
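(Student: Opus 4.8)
The plan is to mirror the structure of the proof of Lemma \ref{lem-equi-5d}, now in the energy-critical setting $d=6$, using the Sobolev-type inequality \eqref{Sobo-ineq} and the identities \eqref{shap-cons-6d} and \eqref{E-K-6d} in place of \eqref{GN-ineq}, \eqref{opti-cons-5d}, and \eqref{E-M-5d}. The two inclusions $\Bc \subseteq \tilde{\Bc}$ and $\tilde{\Bc} \subseteq \Bc$ will be treated separately, and the mass plays no role at all here (it does not enter \eqref{cond-blow-6d}), which actually simplifies matters compared to $d=5$.

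For the inclusion $\Bc \subseteq \tilde{\Bc}$: take $(f,g)\in\Bc$. Recall that when $d=6$ the Pohozaev functional reads $G(f,g) = T(f,g) - 3P(f,g)$, and since $E = \tfrac12 T - P$ we can write $G(f,g) = 3E(f,g) - \tfrac12 T(f,g)$. Using $E(f,g) < E(\phi,\psi)$ and $T(f,g) > T(\phi,\psi)$ together with the ground-state identity \eqref{E-K-6d}, namely $E(\phi,\psi) = \tfrac16 T(\phi,\psi)$, we get
\[
G(f,g) = 3E(f,g) - \tfrac12 T(f,g) < 3E(\phi,\psi) - \tfrac12 T(\phi,\psi) = \tfrac12 T(\phi,\psi) - \tfrac12 T(\phi,\psi) = 0,
\]
so $(f,g)\in\tilde{\Bc}$.

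For the reverse inclusion $\tilde{\Bc} \subseteq \Bc$: take $(f,g)\in\tilde{\Bc}$, so $E(f,g) < E(\phi,\psi)$ and $G(f,g) < 0$, i.e. $T(f,g) < 3P(f,g)$. Apply the sharp Sobolev inequality \eqref{Sobo-ineq}, $P(f,g) \leq C_{\Sob}[T(f,g)]^{3/2}$, to obtain $T(f,g) < 3C_{\Sob}[T(f,g)]^{3/2}$. If $T(f,g) = 0$ then $P(f,g)=0$ and $E(f,g)=0$, which would contradict $E(f,g) < E(\phi,\psi) = \tfrac16 T(\phi,\psi) \le 0$... — here I should be slightly careful: $E(\phi,\psi) = \tfrac16 T(\phi,\psi) > 0$ since a ground state is non-trivial, so in fact $E(f,g) < E(\phi,\psi)$ is possible with $E(f,g)$ of any sign, but $T(f,g) = 0$ would force $(f,g)$ trivial on $\dot H^1 \times \dot H^1$ and then $G(f,g) = 0$, contradicting $G(f,g)<0$; hence $T(f,g) > 0$. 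Dividing by $[T(f,g)]$ gives $1 < 3C_{\Sob}[T(f,g)]^{1/2}$, that is $[T(f,g)]^{1/2} > \tfrac{1}{3}C_{\Sob}^{-1}$, and by \eqref{shap-cons-6d}, $C_{\Sob} = \tfrac13 [T(\phi,\psi)]^{-1/2}$, this reads $[T(f,g)]^{1/2} > [T(\phi,\psi)]^{1/2}$, hence $T(f,g) > T(\phi,\psi)$. Combined with $E(f,g) < E(\phi,\psi)$ this gives $(f,g)\in\Bc$, completing the proof.

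I do not anticipate a serious obstacle here; the only point requiring a word of care is the non-degeneracy $T(f,g) > 0$ used before dividing in the second inclusion, which follows immediately from $G(f,g) < 0$ (a genuinely negative Pohozaev functional forces non-trivial gradients). Everything else is a direct substitution of the $d=6$ variational identities, in complete parallel with Lemma \ref{lem-equi-5d}.
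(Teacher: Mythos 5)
Your proof is correct and follows essentially the same route as the paper: the inclusion $\Bc\subseteq\tilde\Bc$ via $G=3E-\tfrac12 T$ together with \eqref{E-K-6d}, and the reverse inclusion via \eqref{Sobo-ineq} and \eqref{shap-cons-6d}. The only addition is your explicit justification that $T(f,g)>0$ before dividing, a minor point the paper leaves implicit; your handling of it is correct.
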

	
	\begin{proof}
		Let $(f,g) \in \Bc$. We will show that $G(f,g) <0$. Indeed, by \eqref{E-K-6d}, we have
		\[
		G(f,g)= 3 E(f,g) - \frac{1}{2}T(f,g) < 3E(\phi,\psi) - \frac{1}{2}T(\phi,\psi) =0.
		\]
		
	\noindent 	Let us consider now $(f,g) \in \tilde{\Bc}$. As $G(f,g) <0$, we have from \eqref{Sobo-ineq} that
		\[
		T(f,g) < 3 P(f,g) \leq 3 C_{\Sob} [T(f,g)]^{\frac{3}{2}} 
		\]
		or equivalently $[T(f,g)]^{\frac{1}{2}} > \frac{1}{3} C_{\Sob}^{-1}$. This shows that $T(f,g)> T(\phi,\psi)$ thanks to \eqref{shap-cons-6d}.
	\end{proof}

	\begin{lemma} \label{lem-uppe-boun-6d}
		Let $d=6$, $\kappa>0$, and $(\phi,\psi)$ be a ground state related \eqref{ground-2}. Let $(u_0,v_0) \in H^1 \times H^1$ satisfy  \eqref{cond-blow-6d}. Let $(u,v)$ be the corresponding solution to \eqref{SNLS} defined on the maximal forward time interval $[0,T^*)$. Then there exist positive constants $\vareps$ and $c$ such that
		\begin{align} \label{uppe-boun-6d}
		G(u(t),v(t)) + \vareps T(u(t),v(t)) \leq -c 
		\end{align}
		for all $t\in [0,T^*)$.
	\end{lemma}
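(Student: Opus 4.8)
The plan is to mimic the proof of Lemma \ref{lem-uppe-boun-5d}, replacing the Gagliardo--Nirenberg inequality \eqref{GN-ineq} by the Sobolev-type inequality \eqref{Sobo-ineq} and the mass-energy quantities by their energy-only counterparts (this is legitimate since in the energy-critical case $d=6$ the condition \eqref{cond-blow-6d} involves only $E$ and $T$, not $M$). First I would dispose of the easy case $E(u_0,v_0)<0$: by conservation of energy and the identity $G(u(t),v(t))=3E(u(t),v(t))-\tfrac12 T(u(t),v(t))=3E(u_0,v_0)-\tfrac12 T(u(t),v(t))$ (which follows from $G=T-\tfrac d2 P$ with $d=6$ and $E=\tfrac12 T-P$), one gets $G(u(t),v(t))+\tfrac12 T(u(t),v(t))=3E(u_0,v_0)<0$, so \eqref{uppe-boun-6d} holds with $\vareps=\tfrac12$ and $c=-3E(u_0,v_0)>0$.

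For the main case $E(u_0,v_0)\geq 0$, the key steps, in order, are the following. Define $F(\lambda):=\tfrac12\lambda-C_{\Sob}\lambda^{3/2}$. Using \eqref{Sobo-ineq} one writes $E(u(t),v(t))=\tfrac12 T(u(t),v(t))-P(u(t),v(t))\geq F(T(u(t),v(t)))$. Using \eqref{shap-cons-6d} and \eqref{E-K-6d} one checks the algebraic identity $F(T(\phi,\psi))=\tfrac12 T(\phi,\psi)-C_{\Sob}(T(\phi,\psi))^{3/2}=\tfrac12 T(\phi,\psi)-\tfrac13 T(\phi,\psi)=\tfrac16 T(\phi,\psi)=E(\phi,\psi)$. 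Combining with the first condition in \eqref{cond-blow-6d} and conservation of energy gives $F(T(u(t),v(t)))\leq E(u_0,v_0)<E(\phi,\psi)=F(T(\phi,\psi))$ for all $t\in[0,T^*)$. Since $F$ is increasing on $(0,(\tfrac{1}{3C_{\Sob}})^2)=(0,T(\phi,\psi))$ and decreasing afterwards, and since at $t=0$ the second condition $T(u_0,v_0)>T(\phi,\psi)$ places us on the decreasing branch, a continuity argument yields $T(u(t),v(t))>T(\phi,\psi)$ for all $t\in[0,T^*)$.

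Next, following the $d=5$ pattern, pick $\rho=\rho(u_0,v_0,\phi,\psi)>0$ with $E(u_0,v_0)\leq(1-\rho)E(\phi,\psi)$, so that $F(T(u(t),v(t)))\leq(1-\rho)E(\phi,\psi)$. Using $E(\phi,\psi)=\tfrac16 T(\phi,\psi)=\tfrac13 C_{\Sob}(T(\phi,\psi))^{3/2}$, and writing $\lambda:=T(u(t),v(t))/T(\phi,\psi)>1$, this becomes $3\lambda-2\lambda^{3/2}\leq 1-\rho$. Since $h(\lambda):=3\lambda-2\lambda^{3/2}$ satisfies $h(1)=1$, $h'(1)=0$, and $h$ is strictly decreasing on $(1,\infty)$, there is $\nu=\nu(\rho)>0$ with $\lambda\geq 1+\nu$, i.e. $T(u(t),v(t))\geq(1+\nu)T(\phi,\psi)$ for all $t\in[0,T^*)$. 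Finally, for $\vareps>0$ to be fixed, $G(u(t),v(t))+\vareps T(u(t),v(t))=3E(u(t),v(t))-(\tfrac12-\vareps)T(u(t),v(t))=3E(u_0,v_0)-(\tfrac12-\vareps)T(u(t),v(t))\leq 3(1-\rho)E(\phi,\psi)-(\tfrac12-\vareps)(1+\nu)T(\phi,\psi)=\big(\tfrac12(1-\rho)-(\tfrac12-\vareps)(1+\nu)\big)T(\phi,\psi)$, using $E(\phi,\psi)=\tfrac16 T(\phi,\psi)$. Choosing $0<\vareps<\tfrac{\rho+\nu}{2(1+\nu)}$ makes the coefficient strictly negative, giving \eqref{uppe-boun-6d} with $c=\big((\tfrac12-\vareps)(1+\nu)-\tfrac12(1-\rho)\big)T(\phi,\psi)>0$.

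I do not expect any serious obstacle: the argument is a direct transcription of Lemma \ref{lem-uppe-boun-5d} to the energy-critical setting, with the two mild points of care being (i) correctly bookkeeping the numerology coming from \eqref{shap-cons-6d} and \eqref{E-K-6d} in place of \eqref{opti-cons-5d} and \eqref{E-M-5d}, and (ii) noting that because \eqref{cond-blow-6d} already equals its own mass-normalized form, one does not multiply through by $M(u(t),v(t))$ as in the $d=5$ case — the conservation of energy alone suffices. The monotonicity analysis of $h(\lambda)=3\lambda-2\lambda^{3/2}$ near $\lambda=1$ is the only analytic input and is elementary.
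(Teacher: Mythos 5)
Your proposal is correct and is exactly the argument the paper intends: the published proof of this lemma is just the remark that one repeats Lemma \ref{lem-uppe-boun-5d} with \eqref{Sobo-ineq}, \eqref{shap-cons-6d}, and \eqref{E-K-6d} in place of \eqref{GN-ineq}, \eqref{opti-cons-5d}, and \eqref{E-M-5d}, which is precisely what you carry out, including the correct normalized inequality $3\lambda-2\lambda^{3/2}\leq 1-\rho$ and the admissible range $0<\vareps<\frac{\rho+\nu}{2(1+\nu)}$. One cosmetic slip: since $C_{\Sob}[T(\phi,\psi)]^{3/2}=\frac{1}{3}T(\phi,\psi)$, the intermediate identity should read $E(\phi,\psi)=\frac{1}{6}T(\phi,\psi)=\frac{1}{2}C_{\Sob}[T(\phi,\psi)]^{3/2}$ (not $\frac{1}{3}C_{\Sob}[T(\phi,\psi)]^{3/2}$), but this does not affect the inequality you actually derive or any subsequent step.
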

	
	\begin{proof}
		The proof is similar to that of Lemma \ref{lem-uppe-boun-5d} using \eqref{shap-cons-6d} and \eqref{E-K-6d}. We thus omit the details.
	\end{proof}

	\section{Localized virial estimates}
	\label{S4}
	\setcounter{equation}{0}
	In this section we prove the preliminary and fundamental estimates we need for the proof of our main Theorems. We start with the following virial identity (see e.g. \cite[(4.34)]{WY}).
	\begin{lemma} \label{lem-viri}
		Let $d\geq 1$ and $\kappa>0$. Let $\varphi: \R^d \rightarrow \R$ be a sufficiently smooth and decaying function. Let $(u,v)$ be a $H^1$-solution to \eqref{SNLS} defined on the maximal forward time interval $[0,T^*)$. Define
	 	\begin{align} \label{defi-M-varphi}
	 	M_\varphi(t):= 2\ima \int \nabla \varphi(x) \cdot \left( \nabla u(t,x) \overline{u}(t,x) + \nabla v(t,x) \overline{v}(t,x)\right) dx.
	 	\end{align}
	 	Then we have for all $t\in [0,T^*)$,
	 	\begin{align*}
	 	\frac{d}{dt} M_\varphi(t) &= - \int \Delta^2 \varphi(x) \left(|u(t,x)|^2 + \kappa |v(t,x)|^2 \right) dx \\
	 	&\mathrel{\phantom{=}}+ 4\sum_{j,k=1}^d \rea \int \partial^2_{jk} \varphi(x) \left( \partial_j u(t,x) \partial_k \overline{u}(t,x) + \kappa \partial_j v(t,x) \partial_k \overline{v}(t,x) \right) dx \\
	 	&\mathrel{\phantom{=}} - 2 \rea \int \Delta \varphi(x)  v(t,x) \overline{u}^2(t,x) dx.
	 	\end{align*}
	 \end{lemma}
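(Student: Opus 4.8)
The plan is to compute $\frac{d}{dt}M_\varphi$ directly from \eqref{SNLS}. All the manipulations below are formal, and will be made rigorous by the usual approximation procedure — first derive the identity for smooth, spatially decaying solutions, where every integration by parts is legitimate and every integral converges, then remove the extra regularity by approximating the $H^1$ datum by Schwartz data and invoking local well-posedness and persistence of regularity for \eqref{SNLS}, as in \cite{WY} — so I only describe the algebra. Write $M_\varphi(t)=P_u(t)+P_v(t)$ with
\[
P_u(t):=2\ima\int\nabla\varphi\cdot\nabla u\,\overline{u}\,dx,\qquad P_v(t):=2\ima\int\nabla\varphi\cdot\nabla v\,\overline{v}\,dx,
\]
differentiate each piece in time, and substitute $\partial_t u=i\Delta u+2iv\overline{u}$ and $\partial_t v=i\kappa\Delta v+iu^2$, which come from \eqref{SNLS}. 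This decomposes $\frac{d}{dt}M_\varphi$ into a \emph{kinetic} part, carried by the terms $i\Delta u$ and $i\kappa\Delta v$, and a \emph{nonlinear} part, carried by $2iv\overline{u}$ and $iu^2$.

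For the kinetic part, the contribution to $\frac{d}{dt}P_u$ is $2\rea\int\nabla\varphi\cdot\big(\nabla\Delta u\,\overline{u}-\nabla u\,\Delta\overline{u}\big)\,dx$, and two integrations by parts turn this into the classical expression $4\sum_{j,k}\rea\int\partial^2_{jk}\varphi\,\partial_j u\,\partial_k\overline{u}\,dx-\int\Delta^2\varphi\,|u|^2\,dx$. The computation for $P_v$ is identical except that the single factor $\kappa$ coming from $i\kappa\Delta v$ is carried linearly throughout — equivalently, one reduces to the case $\kappa=1$ by the change of variables $v(t,x)=\widetilde v(\kappa t,x)$ — and produces $\kappa\big(4\sum_{j,k}\rea\int\partial^2_{jk}\varphi\,\partial_j v\,\partial_k\overline{v}\,dx-\int\Delta^2\varphi\,|v|^2\,dx\big)$. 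Adding the two yields the first line of the claimed identity.

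For the nonlinear part, the term $2iv\overline{u}$ contributes to $\frac{d}{dt}P_u$ the quantity $4\rea\int\nabla\varphi\cdot\big(\nabla v\,\overline{u}^2+v\,\overline{u}\,\nabla\overline{u}-\overline{v}\,u\,\nabla u\big)\,dx$, while $iu^2$ contributes to $\frac{d}{dt}P_v$ the quantity $4\rea\int\nabla\varphi\cdot u\,\overline{v}\,\nabla u\,dx-2\rea\int\nabla\varphi\cdot\overline{u}^2\,\nabla v\,dx$. On adding these, the terms $-4\rea\int\nabla\varphi\cdot\overline{v}u\nabla u\,dx$ and $4\rea\int\nabla\varphi\cdot u\overline{v}\nabla u\,dx$ cancel, the terms $4\rea\int\nabla\varphi\cdot\overline{u}^2\nabla v\,dx$ and $-2\rea\int\nabla\varphi\cdot\overline{u}^2\nabla v\,dx$ combine, and, using $v\overline{u}\nabla\overline{u}=\tfrac12 v\nabla(\overline{u}^2)$, the total collapses to $2\rea\int\nabla\varphi\cdot\nabla(v\overline{u}^2)\,dx=-2\rea\int\Delta\varphi\,v\overline{u}^2\,dx$ after one final integration by parts. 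This is the last term in the statement; I note that this cancellation holds for every $\kappa>0$, the non-mass-resonance effect surfacing only if one instead tries to express $M_\varphi$ as the time derivative of a weighted mass.

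The one genuinely delicate point — and the step I expect to be the main obstacle — is not the algebra but its justification: making the formal identities rigorous for merely $H^1$ solutions and for weights $\varphi$ that are only smooth and decaying, so that all boundary terms in the integrations by parts vanish and all integrals are finite. This is the standard density/regularization argument indicated above; the algebraic heart of the proof is the term-by-term bookkeeping of the nonlinear contributions together with the cancellation just exhibited.
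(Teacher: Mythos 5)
Your computation is correct and follows exactly the route the paper takes: the paper gives no details, stating only that the identity "can be checked by formal computations" and made rigorous by the standard approximation trick of \cite{Cazenave}, and your term-by-term bookkeeping (including the cancellation of the $\pm 4\rea\int\nabla\varphi\cdot u\overline{v}\nabla u\,dx$ terms and the recombination into $-2\rea\int\Delta\varphi\, v\overline{u}^2\,dx$) checks out. Your closing remark that the $\kappa$-dependence is invisible here and only surfaces when relating $M_\varphi$ to a weighted mass is also accurate, as the paper's proof of Theorem \ref{theo-blow-grow-Sigma-4d} confirms.
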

	 
	 \noindent The above identity can be checked by formal computations. The rigorous proof can be done by performing a standard approximation trick (see e.g. \cite[Section 6.5]{Cazenave}).
	 
	 \begin{remark} From now on we denote $r=|x|.$ \label{rem-viri} ~
	 	\begin{itemize}[leftmargin=6mm]
	 		\item[(1)] If $\varphi(x) = |x|^2$, then
	 		\[
	 		\frac{d}{dt} M_{|x|^2}(t) = 8 G(u(t),v(t)),
	 		\]
	 		where $G$ is as in \eqref{defi-G}.
	 		\item[(2)] If $\varphi$ is radially symmetric, then using the fact that
	 		\[
	 		\partial_j = \frac{x_j}{r} \partial_r, \quad \partial^2_{jk} = \left( \frac{\delta_{jk}}{r} - \frac{x_jx_k}{r^3} \right) \partial_r + \frac{x_j x_k}{r^2} \partial^2_r,
	 		\]
	 		we have
	 		\begin{align*}
	 		\sum_{j,k=1}^d \rea &\int \partial^2_{jk} \varphi(x) \partial_j u (t,x) \partial_k \overline{u}(t,x) dx \\
	 		&= \int \frac{\varphi'(r)}{r} |\nabla u(t,x)|^2 dx + \int \left(\frac{\varphi''(r)}{r^2}-\frac{\varphi'(r)}{r^3}\right) |x \cdot \nabla u(t,x)|^2 dx, 
	 		\end{align*}
	 		where $r=|x|$. In particular, we have
	 		\begin{align*}
	 		\frac{d}{dt} M_\varphi(t) &= -\int \Delta^2 \varphi(x) \left(|u(t,x)|^2 + \kappa |v(t,x)|^2 \right) dx \\
	 		& + 4\int \frac{\varphi'(r)}{r} \left(|\nabla u(t,x)|^2 + \kappa |\nabla v(t,x)|^2 \right) dx \\
	 		& + 4 \int \left(\frac{\varphi''(r)}{r^2} - \frac{\varphi'(r)}{r^3} \right) \left(|x\cdot \nabla u(t,x)|^2 + \kappa |x\cdot \nabla u(t,x)|^2 \right) dx \\
	 		& - 2 \rea \int \Delta \varphi(x) v(t,x) \overline{u}^2(t,x) dx.
	 		\end{align*}
	 		\item[(3)] If $\varphi$ is radial and $(u,v)$ is also radial, then 
	 		\begin{align*}
	 		\frac{d}{dt} M_\varphi(t) &= -\int \Delta^2 \varphi(x) \left(|u(t,x)|^2 + \kappa |v(t,x)|^2\right) dx \\
	 		& + 4 \int \varphi''(r) \left(|\nabla u(t,x)|^2 + \kappa |\nabla v(t,x)|^2\right) dx \\
	 		& - 2 \rea \int \Delta \varphi(x) v(t,x) \overline{u}^2(t,x) dx.
	 		\end{align*}
	 		\item[(4)] Let $d\geq 3$ and denote $x=(y,x_d)$ with $y=(x_1, \dots, x_{d-1}) \in \R^{d-1}$ and $x_d \in \R$. Let $\psi: \R^{d-1} \rightarrow \R$ be a sufficiently smooth and decaying function. Set $\varphi(x) = \psi(y) + x_d^2$. We have
	 		\begin{align*}
	 		\frac{d}{dt}M_\varphi(t) &= -\int \Delta^2_y \psi(y) \left(|u(t,x)|^2 + \kappa |v(t,x)|^2\right) dx \\
	 		& + 4\sum_{j,k=1}^{d-1} \rea \int \partial^2_{jk} \psi(y) \left(\partial_j u(t,x) \partial_k \overline{u}(t,x) + \kappa \partial_j v(t,x) \partial_k \overline{v}(t,x)\right) dx \\
	 		& - 2 \rea \int \Delta_y \psi(y) v(t,x) \overline{u}^2(t,x)dx \\
	 		& + 8 \left(\|\partial_d u(t)\|^2_{L^2} + \kappa \|\partial_d v(t)\|^2_{L^2}\right) - 4 P(u(t),v(t)).
	 		\end{align*}
	 		Moreover, if $(u(t),v(t)) \in \Sigma_d \times \Sigma_d$ for all $t\in [0,T^*)$, then we have
	 		\begin{align*}
	 		\frac{d}{dt} M_\varphi(t) &= -\int \Delta^2_y \psi(y) \left(|u(t,x)|^2 + \kappa |v(t,x)|^2\right) dx \\
	 		& + 4\int \psi''(\rho) \left(|\nabla_y u(t,x)|^2 + \kappa |\nabla_y v(t,x)|^2\right) dx \\
	 		& - 2 \rea \int \Delta_y \psi(y)v(t,x) \overline{u}^2(t,x) dx \\
	 		& + 8 \left(\|\partial_d u(t)\|^2_{L^2} + \kappa \|\partial_d v(t)\|^2_{L^2}\right) - 4 P(u(t),v(t)),
	 		\end{align*}
	 		where $\rho = |y|$.
	 	\end{itemize}
	 \end{remark}
	 
	 Let $\chi: [0,\infty) \rightarrow [0,\infty)$ be a sufficiently smooth function satisfying
	 \begin{align} \label{defi-chi}
	 \chi(s):= \left\{
	 \begin{array}{ccl}
	 s^2 &\text{if}& 0\leq s \leq 1, \\
	 \text{const.} &\text{if}& s\geq 2, 
	 \end{array}
	 \right.
	 \quad \chi''(s) \leq 2, \quad \forall s \geq 0.
	 \end{align}
	 Given $R>1$, we define, by rescaling, the radial function $\varphi_R: \R^d \rightarrow \R$ by
	 \begin{align} \label{defi-varphi-R}
	 \varphi_R(x) = \varphi_R(r) := R^2 \chi(r/R).
	 \end{align}
	 
In the mass-critical case, we have the following refined (with respect to the one in \cite{IKN-NA}) radial localized virial estimate.
	 
	 \begin{lemma}\label{lem-loca-viri-est-rad-2} 
	 	Let $d=4$ and $\kappa >0$. Let $(u,v)$ be a radial $H^1$-solution to \eqref{SNLS} defined on the maximal forward time interval $[0,T^*)$. Let $\varphi_R$ be as in \eqref{defi-varphi-R} and denote $M_{\varphi_R}(t)$ as in \eqref{defi-M-varphi}. Then we have for all $t\in [0,T^*)$,
	 	\begin{align} \label{loca-viri-est-rad-2}
	 	\frac{d}{dt} M_{\varphi_R}(t) &\leq 16 E(u(t),v(t)) -4 \int \left(\theta_{1,R}(r)- CR^{-\frac{3}{2}} (\theta_{2,R}(x))^2\right) |\nabla u(t,x)|^2 dx  + o_R(1)
	 	\end{align}
	 	for some constant $C>0$ depending only on $\kappa$ and $M(u_0,v_0)$, where
	 	\begin{align} \label{defi-theta-12-R}
	 	\theta_{1,R}(r) := 2-\varphi''_R(r), \quad \theta_{2,R}(x) = 8-\Delta \varphi_R(x).
	 	\end{align}
	 \end{lemma}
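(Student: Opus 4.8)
The plan is to specialize the virial identity of Lemma~\ref{lem-viri} to the radial weight $\varphi_R$ from \eqref{defi-varphi-R} and the radial solution $(u,v)$ via the reduction in Remark~\ref{rem-viri}(3),
\[
\frac{d}{dt}M_{\varphi_R}(t)=4\int\varphi_R''(r)\big(|\nabla u|^2+\kappa|\nabla v|^2\big)dx-2\rea\int\Delta\varphi_R\,v\,\overline{u}^2\,dx-\int\Delta^2\varphi_R\big(|u|^2+\kappa|v|^2\big)dx,
\]
and then to reorganize the right-hand side around $16E(u(t),v(t))$, exploiting the mass-critical coincidence in $d=4$: $16E=8T-16P=8\int\big(|\nabla u|^2+\kappa|\nabla v|^2\big)dx-16\rea\int v\,\overline{u}^2\,dx$. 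Substituting $4\varphi_R''=8-4\theta_{1,R}$ and $2\Delta\varphi_R=16-2\theta_{2,R}$ with $\theta_{1,R},\theta_{2,R}$ as in \eqref{defi-theta-12-R}, the identity becomes
\[
\frac{d}{dt}M_{\varphi_R}(t)=16E(u(t),v(t))-4\int\theta_{1,R}\big(|\nabla u|^2+\kappa|\nabla v|^2\big)dx+2\rea\int\theta_{2,R}\,v\,\overline{u}^2\,dx-\int\Delta^2\varphi_R\big(|u|^2+\kappa|v|^2\big)dx.
\]

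The two remainder terms are then disposed of. For the biharmonic term, \eqref{defi-chi}--\eqref{defi-varphi-R} give $\|\Delta^2\varphi_R\|_{L^\infty}\lesssim R^{-2}$ with $\supp\Delta^2\varphi_R\subset\{R\le|x|\le 2R\}$, so conservation of mass yields $\big|\int\Delta^2\varphi_R(|u|^2+\kappa|v|^2)dx\big|\le C(1+\kappa)R^{-2}M(u_0,v_0)=o_R(1)$, uniformly in $t$. Since $\theta_{1,R}=2-\varphi_R''\ge 0$ by \eqref{defi-chi} and $\kappa>0$, the term $-4\kappa\int\theta_{1,R}|\nabla v|^2\,dx\le 0$ is simply dropped. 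Everything thus reduces to proving $2\rea\int\theta_{2,R}v\,\overline{u}^2\,dx\le CR^{-3/2}\int\theta_{2,R}^2|\nabla u|^2\,dx+o_R(1)$, $C$ depending only on $\kappa$ and $M(u_0,v_0)$; feeding this back and absorbing $-4\int\theta_{1,R}|\nabla u|^2\,dx+CR^{-3/2}\int\theta_{2,R}^2|\nabla u|^2\,dx=-4\int\big(\theta_{1,R}-\tfrac{C}{4}R^{-3/2}\theta_{2,R}^2\big)|\nabla u|^2\,dx$ (and relabeling $C$) gives \eqref{loca-viri-est-rad-2}.

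The heart of the matter is a weighted radial Sobolev (Strauss-type) estimate in which the cutoff error $\theta_{2,R}$ is loaded entirely onto the derivative. Here $\theta_{2,R}$ is radial, bounded, nonnegative, supported in $\{|x|\ge R\}$, with $|\theta_{2,R}'|\lesssim R^{-1}$ supported in $\{R\le|x|\le 2R\}$. For radial $u$, $r\ge R$, using $\theta_{2,R}(s)|u(s)|^2\to 0$ as $s\to\infty$ and the fundamental theorem of calculus,
\[
\theta_{2,R}(r)|u(r)|^2\le\int_r^\infty|\theta_{2,R}'(s)|\,|u(s)|^2\,ds+2\int_r^\infty\theta_{2,R}(s)\,|u(s)|\,|\partial_s u(s)|\,ds;
\]
the first integral is $\lesssim R^{-1}\int_R^{2R}|u(s)|^2\,ds\lesssim R^{-4}\|u\|_{L^2}^2$ (inserting $s^3\gtrsim R^3$ and passing to the $\R^4$ radial measure), while in the second I would write $\theta_{2,R}|u|\,|\partial_s u|=\big(|u|s^{-3/2}\big)\big(\theta_{2,R}|\partial_s u|s^{3/2}\big)$, apply Cauchy--Schwarz and use $s^{-3}\le r^{-6}s^3$ for $s\ge r$, obtaining $\lesssim r^{-3}\|u\|_{L^2}\big(\int\theta_{2,R}^2|\nabla u|^2\,dx\big)^{1/2}$. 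Altogether, for $r\ge R$,
\[
\theta_{2,R}(r)|u(r)|^2\lesssim R^{-3}\|u\|_{L^2}\Big(R^{-1}\|u\|_{L^2}+\big(\textstyle\int\theta_{2,R}^2|\nabla u|^2\,dx\big)^{1/2}\Big).
\]
To finish, I would use $\big|2\rea\int\theta_{2,R}v\overline{u}^2dx\big|\le 2\|v\|_{L^2}\big(\int\theta_{2,R}^2|u|^4dx\big)^{1/2}$ and $\int\theta_{2,R}^2|u|^4\,dx\le\big(\sup_{|x|\ge R}\theta_{2,R}|u|^2\big)\int\theta_{2,R}|u|^2\,dx\lesssim\big(\sup_{|x|\ge R}\theta_{2,R}|u|^2\big)\|u\|_{L^2}^2$, insert the pointwise bound, and use conservation of mass to replace $\|u(t)\|_{L^2},\|v(t)\|_{L^2}$ by constants depending on $M(u_0,v_0)$; this produces a bound of the form $C_MR^{-3/2}\big(\int\theta_{2,R}^2|\nabla u|^2\,dx\big)^{1/4}+o_R(1)$, and Young's inequality $X^{1/4}\le\delta X+C_\delta$ with $\delta>0$ fixed converts the first summand into $CR^{-3/2}\int\theta_{2,R}^2|\nabla u|^2\,dx+o_R(1)$.

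I expect the only delicate point to be the bookkeeping in this weighted Strauss inequality: one must retain the full square $\theta_{2,R}^2$ on $|\nabla u|^2$ (rather than a single power, which a cruder splitting would yield), since it is precisely this weight that makes the correction $CR^{-3/2}\theta_{2,R}^2$ dominated by $\theta_{1,R}$ on the far region $\{|x|\ge 2R\}$ while costing only an $O(R^{-3/2})$ error on the transition annulus $\{R\le|x|\le 2R\}$ — which is the source of the refinement over \cite{IKN-NA} — together with the routine but necessary verification that all the $o_R(1)$ remainders are uniform in $t$ (guaranteed by mass conservation) and the standard approximation argument underlying the differentiability of $M_{\varphi_R}$.
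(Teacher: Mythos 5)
Your proposal is correct and follows essentially the same route as the paper: the same reorganization of the radial virial identity around $16E(u(t),v(t))$ using $G=2E$ in $d=4$, the same $O(R^{-2})$ treatment of the $\Delta^2\varphi_R$ term, the same dropping of the $\kappa|\nabla v|^2$ contribution, and the same Strauss-type control of the error term $2\rea\int\theta_{2,R}\,v\,\overline{u}^2\,dx$ leading to the $CR^{-3/2}(\theta_{2,R})^2$ correction. The only (harmless) difference is implementational: you derive a weighted radial Sobolev inequality by hand and close with Young's inequality, whereas the paper applies the standard embedding \eqref{rad-sobo} to $\theta_{2,R}u$, commutes the gradient past the cutoff at the cost of an $O(1)$ term, and uses the elementary bound $Y^{1/2}\leq Y^2+1$.
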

 	
 	\begin{proof}
 		 By Item (3) of Remark \ref{rem-viri}, we have for all $t\in [0,T^*)$,
\begin{equation}\label{vir:4d-rev}
	 	\begin{aligned}
	 	\frac{d}{dt} M_{\varphi_R}(t) &= -\int \Delta^2 \varphi_R(x) \left(|u(t,x)|^2 + \kappa |v(t,x)|^2 \right) dx + 4\int \varphi''_R(r) \left( |\nabla u(t,x)|^2 + \kappa |\nabla v(t,x)|^2 \right) dx \\
	 	& - 2\rea \int \Delta \varphi_R v(t,x) \overline{u}^2 (t,x) dx\\
	&= 8 G(u(t),v(t)) - 4 \int (2-\varphi''_R(r)) \left( |\nabla u(t,x)|^2 + \kappa |\nabla v(t,x)|^2\right) dx \\
	 	& - \int \Delta^2 \varphi_R(x) \left(|u(t,x)|^2 + \kappa |v(t,x)|^2\right) dx + 2 \rea \int (2d-\Delta \varphi_R(x)) v(t,x) \overline{u}^2(t,x) dx.
	 	\end{aligned}
		\end{equation}
By the fact that  $\|\Delta^2 \varphi_R\|_{L^\infty} \lesssim R^{-2}$ together with the conservation of mass, we get the decay 
	 	\[
	 	\left| \int \Delta^2 \varphi_R(x) \left(|u(t,x)|^2 +\kappa |v(t,x)|^2\right) dx \right| \lesssim R^{-2}.
	 	\]
Furthermore, by using that $\varphi''_R (r) \leq 2,$ and by noting that   $G(u(t),v(t))=2 E(u(t),v(t))$ if $d=4,$ \eqref{vir:4d-rev} can be controlled by

 		\begin{align*}
 		\frac{d}{dt}M_{\varphi_R}(t) \leq 16E(u(t),v(t)) - 4 \int \theta_{1,R}(r) |\nabla u(t,x)|^2 dx + 2 \rea \int \theta_{2,R}(x) v(t,x) \overline{u}^2(t,x) dx + CR^{-2}.
 		\end{align*}
 		We estimate
 		\begin{align*}
 		\left| \rea \int \theta_{2,R}(x) v(t,x) \overline{u}^2(t,x) dx \right| &\leq \sup_{|x|\geq R} |\theta_{2,R}(x) u(t,x)| \|v(t)\|_{L^2} \|u(t)\|_{L^2} \\
 		&\lesssim R^{-\frac{3}{2}} \|\nabla (\theta_{2,R} u(t))\|_{L^2}^{\frac{1}{2}} \|\theta_{2,R} u(t)\|^{\frac{1}{2}}_{L^2} \|v(t)\|_{L^2} \|u(t)\|_{L^2} \\
 		&\lesssim R^{-\frac{3}{2}} \|\nabla (\theta_{2,R} u(t))\|^{\frac{1}{2}}_{L^2},
 		\end{align*}
 		where we have used the conservation of mass in the last estimate. Note that $\theta_{2,R}(x)=0$ for $|x|\leq R$. As $\|\nabla \theta_{2,R}\|_{L^\infty} \lesssim 1$, the conservation of mass implies that
 		\[
 		\|\nabla(\theta_{2,R} u(t))\|_{L^2} \lesssim \|\nabla \theta_{2,R}\|_{L^\infty} \|u(t)\|_{L^2} + \|\theta_{2,R} \nabla u(t)\|_{L^2} \lesssim \|\theta_{2,R} \nabla u(t)\|_{L^2} + 1.
 		\]
 		It follows that
 		\begin{align*}
 		\left| \rea \int \theta_{2,R}(x) v(t,x) \overline{u}^2(t,x) dx \right| & \lesssim R^{-\frac{3}{2}} \left(\|\theta_{2,R} \nabla u(t)\|_{L^2} + 1 \right)^{\frac{1}{2}} \\
 		&\lesssim R^{-\frac{3}{2}} \left( \|\theta_{2,R} \nabla u(t)\|^2_{L^2} +1\right).
 		\end{align*}
 		Therefore, we obtain
 		\[
 		\frac{d}{dt}M_{\varphi_R}(t) \leq 16 E(u(t),v(t)) - 4 \int \left(\theta_{1,R}(r) - CR^{-\frac{3}{2}} (\theta_{2,R}(x))^2\right) |\nabla u(t,x)|^2 dx + CR^{-2} +CR^{-\frac{3}{2}}.
 		\]
 		The proof is complete.
 	\end{proof}
 	
 	Next we derive localized virial estimates for cylindrically symmetric solutions. To this end, we introduce
 	\begin{align} \label{defi-psi-R}
 	\psi_R(y)= \psi_R(\rho) := R^2 \chi(\rho/R), \quad \rho =|y|
 	\end{align}
 	and set
 	\begin{align} \label{defi-varphi-R-psi}
 	\varphi_R(x):= \psi_R(y) + x_d^2.
 	\end{align}
 	
 	\begin{lemma}[Cylindrical localized virial estimate I] \label{lem-loca-viri-est-cyli-1}
 		Let $d=5,6$, and $\kappa>0$. Let $(u,v)$ be a $\Sigma_d$-solution to \eqref{SNLS} defined on the maximal forward time interval $[0,T^*)$. Let $\varphi_R$ be as in \eqref{defi-varphi-R-psi} and denote $M_{\varphi_R}(t)$ as in \eqref{defi-M-varphi}. Then we have for all $t\in [0,T^*)$,
 		\begin{align} \label{loca-viri-est-cyli-1}
 		\frac{d}{dt} M_{\varphi_R}(t) \leq 8 G(u(t),v(t)) + CR^{-\frac{d-2}{2}} \|\nabla u(t)\|^2_{L^2} + o_R(1)
 		\end{align}
 		for some constant $C>0$ depending only on $d,\kappa$, and $M(u_0,v_0)$.
 	\end{lemma}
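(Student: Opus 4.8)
The plan is to compare the cylindrical virial identity for $\Sigma_d$-solutions with $8G(u(t),v(t))$ and to show the difference consists of harmless errors. Apply Item~(4) of Remark~\ref{rem-viri} with $\varphi_R(x)=\psi_R(y)+x_d^2$. Since $\chi(s)=s^2$ for $s\le1$ one has $\Delta_y\psi_R(y)=2(d-1)$ on $\{\rho\le R\}$, and writing $T(f,g)=\|\nabla_y f\|_{L^2}^2+\|\partial_d f\|_{L^2}^2+\kappa\|\nabla_y g\|_{L^2}^2+\kappa\|\partial_d g\|_{L^2}^2$ one sees that the $\partial_d$-terms in the identity reproduce the corresponding part of $8G$ exactly. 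Regrouping then gives, for all $t\in[0,T^*)$,
\begin{align*}
\frac{d}{dt}M_{\varphi_R}(t)-8G(u(t),v(t))
&= -\int\Delta_y^2\psi_R(y)\big(|u(t,x)|^2+\kappa|v(t,x)|^2\big)\,dx \\
&\quad -4\int\big(2-\psi_R''(\rho)\big)\big(|\nabla_y u(t,x)|^2+\kappa|\nabla_y v(t,x)|^2\big)\,dx \\
&\quad +2\,\rea\int_{\{\rho\ge R\}}\big(2(d-1)-\Delta_y\psi_R(y)\big)\,v(t,x)\,\overline{u}^2(t,x)\,dx
=: I+II+III.
\end{align*}
Since $\psi_R''(\rho)=\chi''(\rho/R)\le 2$, the term $II$ is nonpositive and may simply be discarded; moreover $2-\psi_R''(\rho)$ and $2(d-1)-\Delta_y\psi_R(y)$ are bounded uniformly in $R$ and supported in $\{\rho\ge R\}$.

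The term $I$ is immediate: $\|\Delta_y^2\psi_R\|_{L^\infty}\lesssim R^{-2}$ together with the conservation of mass yields $|I|\lesssim R^{-2}=o_R(1)$, with a constant depending on $\kappa$ and $M(u_0,v_0)$. The core is $III$. By Cauchy--Schwarz,
\[
|III|\ \lesssim\ \int_{\{\rho\ge R\}}|v(t,x)|\,|u(t,x)|^2\,dx\ \le\ \|v(t)\|_{L^2}\Big(\int_{\{\rho\ge R\}}|u(t,x)|^4\,dx\Big)^{1/2}.
\]
To bound the exterior $L^4$-norm I would invoke the Sobolev embedding for partially radial functions: for a.e.\ $x_d$ the slice $y\mapsto u(t,y,x_d)$ is radial on $\R^{d-1}$ (with $d-1\ge 2$ since $d=5,6$), so the Strauss-type inequality gives $\|u(t,\cdot,x_d)\|_{L^\infty_y(\{|y|\ge R\})}^2\lesssim R^{-(d-2)}\|u(t,\cdot,x_d)\|_{L^2_y}\|\nabla_y u(t,\cdot,x_d)\|_{L^2_y}$. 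Estimating $\int_{\{\rho\ge R\}}|u|^4\le\int_\R\|u(t,\cdot,x_d)\|_{L^\infty_y(\{|y|\ge R\})}^2\|u(t,\cdot,x_d)\|_{L^2_y}^2\,dx_d$, applying Cauchy--Schwarz in $x_d$, and using the one-dimensional bound $\sup_{x_d}\|u(t,\cdot,x_d)\|_{L^2_y}^2\lesssim\|u(t)\|_{L^2}\|\partial_d u(t)\|_{L^2}$, I obtain
\[
\int_{\{\rho\ge R\}}|u(t,x)|^4\,dx\ \lesssim\ R^{-(d-2)}\|u(t)\|_{L^2}^2\,\|\partial_d u(t)\|_{L^2}\,\|\nabla_y u(t)\|_{L^2}\ \lesssim\ R^{-(d-2)}\|\nabla u(t)\|_{L^2}^2,
\]
where mass conservation absorbs $\|u(t)\|_{L^2}^2$ and $\|\partial_d u\|_{L^2}\|\nabla_y u\|_{L^2}\le\|\nabla u\|_{L^2}^2$. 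Hence $|III|\lesssim R^{-\frac{d-2}{2}}\|\nabla u(t)\|_{L^2}\le R^{-\frac{d-2}{2}}\|\nabla u(t)\|_{L^2}^2+R^{-\frac{d-2}{2}}$, and collecting $I$, $II$, $III$ yields \eqref{loca-viri-est-cyli-1} with $C=C(d,\kappa,M(u_0,v_0))$.

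The main obstacle is precisely the estimate of $III$: one must extract both the decay $R^{-\frac{d-2}{2}}$ \emph{and} a power of $\|\nabla u\|_{L^2}$ only (not $\|\nabla v\|_{L^2}$, which would be illegitimate in the statement) from the exterior integral of $|v|\,|u|^2$. This dictates pairing the two factors of $u$ as $\|u\|_{L^4(\{\rho\ge R\})}^2$ before using the radial Sobolev inequality in the $y$-variable, and controlling the $x_d$-profile of the partial $L^2_y$-norms by $\|\partial_d u\|_{L^2}$ via a one-dimensional Gagliardo--Nirenberg argument. The hypothesis $(u,v)\in\Sigma_d\times\Sigma_d$, propagated by the flow, is what makes Item~(4) of Remark~\ref{rem-viri} applicable in the first place.
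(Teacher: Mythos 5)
Your proposal is correct and follows essentially the same route as the paper: the same regrouping of the cylindrical virial identity around $8G$, discarding the nonpositive $(2-\psi_R'')$-term, bounding the bi-Laplacian term by $R^{-2}$ via mass conservation, and controlling the exterior cubic term through $\|v\|_{L^2}\|u\|_{L^4(\{\rho\ge R\})}^2$ using the radial Sobolev (Strauss) inequality on the $y$-slices combined with the one-dimensional bound $\sup_{x_d}\|u(t,\cdot,x_d)\|_{L^2_y}^2\lesssim\|\partial_d u(t)\|_{L^2}$. The constants and powers of $R$ you obtain match those in the paper's proof.
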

 	
 	\begin{proof}
 		By Item (4) of Remark \ref{rem-viri}, we have for all $t\in [0,T^*)$,
 		\begin{align*}
 		\frac{d}{dt} M_{\varphi_R}(t) &=  -\int \Delta^2_y \psi_R(y) \left(|u(t,x)|^2 + \kappa |v(t,x)|^2\right) dx \\
 		& + 4\int \psi''_R(\rho) \left(|\nabla_y u(t,x)|^2 + \kappa |\nabla_y v(t,x)|^2\right) dx \\
 		& - 2 \rea \int \Delta_y \psi_R(y)v(t,x) \overline{u}^2(t,x) dx \\
 		& + 8 \left(\|\partial_d u(t)\|^2_{L^2} + \kappa \|\partial_d v(t)\|^2_{L^2}\right) - 4 P(u(t),v(t)).
 		\end{align*}
 		It follows that
 		\begin{align*}
 		\frac{d}{dt} M_{\varphi_R}(t) &\leq 8 G(u(t),v(t)) +CR^{-2} - 4 \int (2-\psi''_R(\rho)) \left( |\nabla_y u(t,x)|^2 + \kappa |\nabla_y v(t,x)|^2\right) dx \\
 		& +2 \rea \int (2(d-1)-\Delta_y\psi_R(y)) v(t,x) \overline{u}^2(t,x) dx.
 		\end{align*}
 		As $\psi''_R(\rho) \leq 2$ and $\|\Delta_y \psi_R\|_{L^\infty_x} \lesssim 1$, we have
 		\begin{align} \label{est-cyli}
 		\frac{d}{dt} M_{\varphi_R}(t) \leq 8 G(u(t),v(t)) + CR^{-2} + C \int_{|y|\geq R} |v(t,x) \overline{u}^2(t,x)| dx.
 		\end{align}
 		By the conservation of mass, we have
 		\begin{align}
 		\int_{|y|\geq R} |v(t,x) \overline{u}^2(t,x)|dx &\leq \left( \int_{|y|\geq R} |u(t,x)|^4 dx\right)^{1/2} \left( \int_{|y|\geq R} |v(t,x)|^2 dx\right)^{1/2} \nonumber \\
 		&\lesssim \left( \int_{|y|\geq R} |u(t,x)|^4 dx\right)^{1/2}. \label{est-cyli-0}
 		\end{align}
 		Next we estimate
 		\begin{align*}
 		\int_{|y|\geq R} |u(t,x)|^4 dx &\leq \int_{\R} \|u(t,x_d)\|^2_{L^2_y} \|u(t,x_d)\|^2_{L^\infty_y(|y|\geq R)} dx_d \\
 		&\leq \sup_{x_d \in \R} \|u(t,x_d)\|^2_{L^2_y} \left(\int_{\R} \|u(t,x_d)\|^2_{L^\infty_y(|y|\geq R)} dx_d \right).
 		\end{align*}
 		Set $g(x_d):= \|u(t,x_d)\|^2_{L^2_y}$, we have
 		\begin{align*}
 		g(x_d) = \int_{-\infty}^{x_d} \partial_s g(s) ds &= 2 \int_{-\infty}^{x_d} \rea \int_{\R^{d-1}} \overline{u}(t,y,s) \partial_s u(t,y,s) dy  ds \\
 		&\leq 2 \|u(t)\|_{L^2_x} \|\partial_d u(t)\|_{L^2_x}
 		\end{align*}
 		which, by the conservation of mass, implies that
 		\begin{align} \label{est-cyli-1}
 		\sup_{x_d \in \R} \|u(t,x_d)\|^2_{L^2_y} \lesssim \|\partial_d u(t)\|_{L^2_x}.
 		\end{align}
 		We next use the radial Sobolev embedding, see \cite{CO}, 
		\begin{equation} \label{rad-sobo}
	 	\sup_{x \ne 0} |x|^{\frac{d-1}{2}} |f(x)| \leq C(d) \|\nabla f\|^{\frac{1}{2}}_{L^2} \|f\|^{\frac{1}{2}}_{L^2},
	 	\end{equation} to have
 		\begin{align}
 		\int_{\R} \|u(t,x_d)\|^2_{L^\infty_y(|y|\geq R)} dx_d &\lesssim R^{-d+2}  \int_{\R} \|\nabla_y u(t,x_d)\|_{L^2_y} \|u(t,x_d)\|_{L^2_y} dx_d \nonumber \\
 		&\lesssim R^{-d+2} \left( \int_{\R} \|\nabla_y u(t,x_d)\|^2_{L^2_y} dx_d\right)^{1/2} \left( \int_{\R} \|u(t,x_d)\|^2_{L^2_y} dx_d\right)^{1/2} \nonumber \\
 		&\lesssim R^{-d+2} \|\nabla_y u(t)\|_{L^2_x} \|u(t)\|_{L^2_x} \nonumber \\
 		&\lesssim R^{-d+2} \|\nabla_y u(t)\|_{L^2_x}. \label{est-cyli-2}
 		\end{align}
 		Collecting \eqref{est-cyli-0}, \eqref{est-cyli-1}, and \eqref{est-cyli-2}, we get
 		\begin{align*}
 		\int_{|y|\geq R} |v(t,x) \overline{u}^2(t,x)| dx &\lesssim R^{-\frac{d-2}{2}} \|\nabla_y u(t)\|^{1/2}_{L^2_x} \|\partial_d u(t)\|^{1/2}_{L^2_x} \\
 		&\lesssim R^{-\frac{d-2}{2}} \left(\|\nabla_y u(t)\|_{L^2_x} + \|\partial_d u(t)\|_{L^2_x}\right) \\
 		&\lesssim R^{-\frac{d-2}{2}} \left( \|\nabla u(t)\|^2_{L^2_x} + 1 \right).
 		\end{align*}
 		This together with \eqref{est-cyli} prove \eqref{loca-viri-est-cyli-1}. The proof is complete.
 	\end{proof}
 	
 	We also have the following refined localized virial estimate which will be used in the mass-critical problem.
 	\begin{lemma}[Cylindrical localized virial estimate II] \label{lem-loca-viri-est-cyli-2}
 		Let $d=4$ and $\kappa>0$. Let $(u,v)$ be a $\Sigma_4$-solution to \eqref{SNLS} defined on the maximal forward time interval $[0,T^*)$. Let $\varphi_R$ be as in \eqref{defi-varphi-R-psi} and denote $M_{\varphi_R}(t)$ as in \eqref{defi-M-varphi}. Then we have for all $t\in [0,T^*)$,
 		\begin{align} \label{loca-viri-est-cyli-2}
 		\frac{d}{dt} M_{\varphi_R}(t) &\leq 16 E(u(t),v(t)) -4 \int \left(\vartheta_{1,R}(\rho) - CR^{-1} (\vartheta_{2,R}(y))^2\right) |\nabla_y u(t,x)|^2 dx \nonumber \\
 		& + CR^{-1} \|\partial_4 u(t)\|^2_{L^2} + o_R(1)
 		\end{align}
 		for some constant $C>0$ depending only on $\kappa$ and $M(u_0,v_0)$, where
 		\begin{align} \label{defi-vartheta-12-R}
 		\vartheta_{1,R}(\rho):= 2-\psi''_R(\rho), \quad \vartheta_{2,R}(y):= 6-\Delta_y \psi_R(y).
 		\end{align}
 	\end{lemma}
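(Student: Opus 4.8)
\emph{Strategy of proof.} The plan is to run exactly the scheme of Lemmas~\ref{lem-loca-viri-est-rad-2} and~\ref{lem-loca-viri-est-cyli-1}, starting from the cylindrical virial identity of Item~(4) in Remark~\ref{rem-viri} applied with $\varphi_R(x)=\psi_R(y)+x_d^2$ and $d=4$, and reorganizing its right-hand side to bring out the conserved energy. Since $\psi_R(y)=|y|^2$ on $\{|y|\le R\}$, one has $\Delta_y\psi_R=2(d-1)=6$ there; writing $\psi_R''(\rho)=2-\vartheta_{1,R}(\rho)$ and $\Delta_y\psi_R(y)=6-\vartheta_{2,R}(y)$ (so $\vartheta_{1,R},\vartheta_{2,R}$ are supported in $\{|y|\ge R\}$ and bounded, with $\vartheta_{1,R}\ge0$ because $\chi''\le2$) and using $|\nabla u|^2=|\nabla_yu|^2+|\partial_4u|^2$ together with $8T(u,v)-16P(u,v)=16E(u,v)$, the identity rearranges into
\begin{align*}
\frac{d}{dt}M_{\varphi_R}(t) &= 16\,E(u(t),v(t)) - 4\int\vartheta_{1,R}(\rho)\big(|\nabla_yu|^2+\kappa|\nabla_yv|^2\big)\,dx \\
&\quad - \int\Delta_y^2\psi_R(y)\big(|u|^2+\kappa|v|^2\big)\,dx + 2\,\rea\int\vartheta_{2,R}(y)\,v\,\overline u^2\,dx.
\end{align*}
The term $-4\kappa\int\vartheta_{1,R}|\nabla_yv|^2$ is non-positive and is simply discarded, and $\|\Delta_y^2\psi_R\|_{L^\infty}\lesssim R^{-2}$ together with mass conservation makes the bi-Laplacian term an $o_R(1)$. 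Thus everything reduces to showing $|\rea\int\vartheta_{2,R}v\overline u^2\,dx|\le CR^{-1}\|\partial_4u\|_{L^2}^2+CR^{-1}\int(\vartheta_{2,R}(y))^2|\nabla_yu|^2\,dx+o_R(1)$.

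For this nonlinear term I would argue as in Lemma~\ref{lem-loca-viri-est-cyli-1}, but placing the decay directly on the localized factor $\vartheta_{2,R}u$. For fixed $x_4$, bound
\[
\int_{|y|\ge R}|\vartheta_{2,R}|\,|v|\,|u|^2\,dy \le \|\vartheta_{2,R}u(t,\cdot,x_4)\|_{L^\infty_y(|y|\ge R)}\,\|u(t,\cdot,x_4)\|_{L^2_y}\,\|v(t,\cdot,x_4)\|_{L^2_y},
\]
and apply the radial Sobolev embedding~\eqref{rad-sobo} in dimension $d-1=3$ to the $y$-radial function $\vartheta_{2,R}u(t,\cdot,x_4)$; since the weight is $|y|^{(d-2)/2}=|y|$ there, one gains a factor $R^{-1}$ on $\{|y|\ge R\}$, giving $\|\vartheta_{2,R}u(t,\cdot,x_4)\|_{L^\infty_y(|y|\ge R)}\lesssim R^{-1}\|\nabla_y(\vartheta_{2,R}u(t,\cdot,x_4))\|_{L^2_y}^{1/2}\|u(t,\cdot,x_4)\|_{L^2_y}^{1/2}$. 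Integrating in $x_4$ by H\"older with exponents $(4,4,2)$ and using mass conservation together with the bound $\sup_{x_4}\|u(t,\cdot,x_4)\|_{L^2_y}^2\lesssim\|u(t)\|_{L^2}\|\partial_4u(t)\|_{L^2}$ (obtained exactly as in~\eqref{est-cyli-1} by the fundamental theorem of calculus in $x_4$), one gets
\[
\Big|\rea\int\vartheta_{2,R}v\overline u^2\,dx\Big| \lesssim R^{-1}\,\|\partial_4u(t)\|_{L^2}^{1/2}\,\|\nabla_y(\vartheta_{2,R}u(t))\|_{L^2}^{1/2}.
\]

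To conclude, I would expand $\|\nabla_y(\vartheta_{2,R}u)\|_{L^2}\le\|\nabla_y\vartheta_{2,R}\|_{L^\infty}\|u\|_{L^2}+\|\vartheta_{2,R}\nabla_yu\|_{L^2}\lesssim 1+\|\vartheta_{2,R}\nabla_yu\|_{L^2}$ (using $\|\nabla_y\vartheta_{2,R}\|_{L^\infty}\lesssim R^{-1}$ and mass conservation), and then apply Young's inequality twice, together with the elementary bound $R^{-1}\|\partial_4u\|_{L^2}\le R^{-1}\|\partial_4u\|_{L^2}^2+R^{-1}$, to absorb everything into $CR^{-1}\|\partial_4u(t)\|_{L^2}^2+CR^{-1}\int(\vartheta_{2,R}(y))^2|\nabla_yu(t,x)|^2\,dx+o_R(1)$. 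Substituting this into the reorganized identity and merging the $\vartheta_{2,R}^2$-integral with $-4\int\vartheta_{1,R}|\nabla_yu|^2$ yields exactly~\eqref{loca-viri-est-cyli-2}. I expect the main obstacle to be the bookkeeping in this last step: one must steer the localized gradient $\|\vartheta_{2,R}\nabla_yu\|_{L^2}^2$ into the single \emph{good} quantity $-4\int(\vartheta_{1,R}-CR^{-1}\vartheta_{2,R}^2)|\nabla_yu|^2$ while keeping all other contributions either as the single uncontrolled term $CR^{-1}\|\partial_4u\|_{L^2}^2$ or as $o_R(1)$; in particular the H\"older exponents $(4,4,2)$ must be chosen so that $\|\partial_4v\|_{L^2}$ and the full $\|\nabla_yu\|_{L^2}$ never appear, and the commutator term coming from differentiating the product $\vartheta_{2,R}u$ must be controlled purely by mass conservation.
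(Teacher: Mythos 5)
Your proposal is correct and follows essentially the same route as the paper: the same cylindrical virial identity from Item (4) of Remark \ref{rem-viri}, the same rearrangement via $8G=16E$ in $d=4$, the radial Sobolev embedding \eqref{rad-sobo} in $\R^3$ applied to $\vartheta_{2,R}u$ on each slice, the bound $\sup_{x_4}\|u(t,\cdot,x_4)\|_{L^2_y}^2\lesssim\|\partial_4 u(t)\|_{L^2}$, and Young's inequality to absorb $\|\nabla_y(\vartheta_{2,R}u)\|_{L^2}$ into $\|\vartheta_{2,R}\nabla_y u\|_{L^2}^2+O(1)$. The only cosmetic difference is that the paper first applies Cauchy--Schwarz globally to put $v$ in $L^2_x$ and then slices $\int\vartheta_{2,R}^2|u|^4$, whereas you slice from the start; both yield the identical bound $R^{-1}\|\nabla_y(\vartheta_{2,R}u)\|_{L^2}^{1/2}\|\partial_4 u\|_{L^2}^{1/2}$.
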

 	
 	\begin{proof}
 		Estimating as in the proof of Lemma \ref{lem-loca-viri-est-cyli-1}, we have
 		\begin{align*}
 		\frac{d}{dt} M_{\varphi_R}(t) &\leq 16 E(u(t),v(t)) + CR^{-2} - 4 \int \vartheta_{1,R}(\rho) \left( |\nabla_y u(t,x)|^2 + \kappa |\nabla_y v(t,x)|^2\right) dx \\
 		& +2 \rea \int \vartheta_{2,R}(y) v(t,x) \overline{u}^2(t,x) dx.
 		\end{align*}
 		By the conservation of mass, we see that
 		\begin{align*}
 		\left| \rea \int \vartheta_{2,R}(y) v(t,x) \overline{u}^2(t,x) dx \right| &\leq \left( \int (\vartheta_{2,R}(y))^2 |u(t,x)|^4 dx \right)^{1/2} \left(\int |v(t,x)|^2 dx\right)^{1/2} \\
 		&\lesssim \left( \int  (\vartheta_{2,R}(y))^2 |u(t,x)|^4 dx \right)^{1/2}.
 		\end{align*}
 		By the H\"older's inequality, we have
 		\begin{align*}
 		\int  (\vartheta_{2,R}(y))^2 |u(t,x)|^4 dx  &\leq \int_{\R} \|u(t,x_4)\|^2_{L^2_y} \|\vartheta_{2,R} u(t,x_4)\|^2_{L^\infty_y} dx_4 \\
 		&\leq \sup_{x_4 \in \R} \|u(t,x_4)\|^2_{L^2_y} \left( \int_{\R} \|\vartheta_{2,R} u(t,x_4)\|^2_{L^\infty_y} dx_4\right).
 		\end{align*}
 		The first term is treated in \eqref{est-cyli-1}. For the second term, as $\vartheta_{2,R}(y) =0$ for $|y|\leq R$, we use the radial Sobolev embedding \eqref{rad-sobo} to have
 		\begin{align*}
 		\int_{\R} \|\vartheta_{2,R} u(t,x_4)\|^2_{L^\infty_y} dx_4 &\lesssim R^{-2} \int_{\R} \|\nabla_y (\vartheta_{2,R} u(t,x_4))\|_{L^2_y} \|\vartheta_{2,R} u(t,x_4)\|_{L^2_y} dx_4 \\
 		&\lesssim R^{-2} \left( \int_{\R} \|\nabla_y (\vartheta_{2,R} u(t,x_4))\|^2_{L^2_y} dx_4 \right)^{1/2} \left( \int_{\R} \|\vartheta_{2,R} u(t,x_4)\|^2_{L^2_y} dx_4\right)^{1/2} \\
 		&\lesssim R^{-2} \|\nabla_y (\vartheta_{2,R} u(t))\|_{L^2_x} \|\vartheta_{2,R} u(t)\|_{L^2_x} \\
 		&\lesssim R^{-2} \|\nabla_y(\vartheta_{2,R} u(t))\|_{L^2_x}.
 		\end{align*}
 		It follows that
 		\begin{align*}
 		\left|\rea \int \vartheta_{2,R}(y) v(t,x) \overline{u}^2(t,x) dx \right| &\lesssim R^{-1} \|\nabla_y(\vartheta_{2,R} u(t))\|_{L^2_x}^{1/2} \|\partial_4 u(t)\|_{L^2_x}^{1/2} \\
 		&\lesssim R^{-1} \left( \|\nabla_y(\vartheta_{2,R} u(t))\|_{L^2_x} + \|\partial_4 u(t)\|_{L^2_x}\right) \\
 		&\lesssim R^{-1} \left( \|\nabla_y(\vartheta_{2,R} u(t))\|^2_{L^2_x} + \|\partial_4 u(t)\|^2_{L^2_x}  +1 \right) \\
 		&\lesssim R^{-1} \left( \|\vartheta_{2,R} \nabla_y u(t)\|^2_{L^2_x} +  \|\partial_4 u(t)\|^2_{L^2_x} +1 \right),
 		\end{align*}
 		where we have used the conservation of mass and $\|\nabla \vartheta_{2,R}\|_{L^\infty_x}\lesssim 1$ to get the last estimate. Collecting the above estimates, we prove \eqref{loca-viri-est-cyli-2}.
 	\end{proof}

 	\section{Proof of the main results}
	\label{S5}
	\setcounter{equation}{0}
	We are now able to prove the main results stated in Section \ref{S-Main}.
	
	\subsection{The intercritical case} The proof of Theorem \ref{theo-blow-5d} is done by performing an ODE argument, by using the a-priori estimates we proved in the previous Section.\\
	 
	\noindent {\it Proof of Theorem \ref{theo-blow-5d}.}
	Let $(u_0,v_0) \in \Sigma_5 \times \Sigma_5$ satisfy \eqref{cond-blow}. We will show that $T^*<\infty$. Assume by contradiction that $T^*=\infty$. By Lemma \ref{lem-uppe-boun-5d}, there exist positive constants $\vareps$ and $c$ such that
	\begin{align} \label{uppe-boun-5d-app}
	G(u(t),u(t)) + \vareps T(u(t),v(t)) \leq -c
	\end{align}
	for all $t\in [0,\infty)$. On the other hand, by Lemma \ref{lem-loca-viri-est-cyli-1}, we have for all $t\in [0,\infty)$,
	\begin{align} \label{est-viri-cyli-app}
	\frac{d}{dt} M_{\varphi_R}(t)\leq 8G(u(t),v(t)) + CR^{-\frac{3}{2}} \|\nabla u(t)\|^2_{L^2} + o_R(1),
	\end{align}
	where $\varphi_R$ is as in \eqref{defi-varphi-R} and $M_{\varphi_R}(t)$ is as in \eqref{defi-M-varphi}. It follows from \eqref{uppe-boun-5d-app} and \eqref{est-viri-cyli-app} that for all $t\in [0,\infty)$,
	\begin{align*}
	\frac{d}{dt} M_{\varphi_R}(t) \leq -8c - 8\vareps T(u(t),v(t)) +CR^{-\frac{3}{2}} \|\nabla u(t)\|^2_{L^2} + o_R(1).
	\end{align*}
	By choosing $R>1$ sufficiently large, we get
	\begin{align} \label{est-M-varphi-R-app-1}
	\frac{d}{dt}M_{\varphi_R}(t) \leq -4c -4\vareps T(u(t),v(t))
	\end{align}
	for all $t\in [0,\infty)$. Integrating the above inequality, we see that $M_{\varphi_R}(t) <0$ for all $t\geq t_0$  with some $t_0>0$ sufficiently large. We infer from \eqref{est-M-varphi-R-app-1} that
	\begin{align} \label{est-M-varphi-R-app-2}
	M_{\varphi_R}(t) \leq -4\vareps \int_{t_0}^t T(u(s),v(s)) ds
	\end{align}
	for all $t\geq t_0$. On the other hand, by the H\"older's inequality and the conservation of mass, we have
	\begin{align}
	|M_{\varphi_R}(t)| &\leq C \|\nabla \varphi_R\|_{L^\infty} \left( \|\nabla u(t)\|_{L^2} \|u(t)\|_{L^2} + \|\nabla v(t)\|_{L^2} \|v(t)\|_{L^2} \right) \nonumber \\
	&\leq C(\varphi_R, \kappa, M(u_0,v_0)) \sqrt{T(u(t),v(t))}. \label{est-M-varphi-R-app-3}
	\end{align}
	From \eqref{est-M-varphi-R-app-2} and \eqref{est-M-varphi-R-app-3}, we get
	\begin{align} \label{est-M-varphi-R-app-4}
	M_{\varphi_R}(t) \leq -A \int_{t_0}^t |M_{\varphi_R}(s)|^2 ds
	\end{align}
	for all $t\geq t_0$, where $A=A(\vareps, \varphi_R, \kappa, M(u_0,v_0))>0$. Set 
	\begin{align} \label{est-M-varphi-R-app-5}
	z(t):= \int_{t_0}^t |M_{\varphi_R}(s)|^2 ds, \quad t\geq t_0.
	\end{align}
	We see that $z(t)$ is  non-decreasing and non-negative. Moreover, 
	\[
	z'(t) = |M_{\varphi_R}(t)|^2 \geq A^2 z^2(t), \quad \forall t\geq t_0.
	\]
	For $t_1>t_0$, we integrate \eqref{est-M-varphi-R-app-5} over $[t_1,t]$ to obtain
	\[
	z(t) \geq \frac{z(t_1)}{1-A^2z(t_1)(t-t_1)}, \quad \forall t\geq t_1.
	\]
	This shows that $z(t) \rightarrow +\infty$ as $t \nearrow t^*$, where
	\[
	t^*:= t_1 + \frac{1}{A^2 z(t_1)} >t_1.
	\]
	In particular, we have
	\[
	M_{\varphi_R}(t) \leq -Az(t) \rightarrow -\infty
	\]
	as $t\nearrow t^*$. Thus the solution cannot exist for all time $t\geq 0$. Therefore it must blow-up in finite time.
	\hfill $\Box$
		
	\subsection{The energy-critical case} The proof is done by an ODE argument as well, similarly to the intercritical case. \\
	
	\noindent {\it Proof of Theorem \ref{theo-blow-6d}.} The proof is similar to that of Theorem \ref{theo-blow-5d} using \eqref{uppe-boun-6d} and \eqref{loca-viri-est-cyli-1}. Thus we omit the details.
	\hfill $\Box$

	\subsection{The mass-critical case} In this subsection, we give the proofs of the blow-up/grow-up results given in Theorems \ref{theo-blow-grow-4d} and \ref{theo-blow-grow-Sigma-4d}. \\
	
	\noindent {\it Proof of Theorem \ref{theo-blow-grow-4d}.}
	Let $(u_0,v_0) \in H^1\times H^1$ be radially symmetric satisfying $E(u_0,v_0)<0$. Let $(u,v)$ be the corresponding solution to \eqref{SNLS} defined on the maximal forward time interval $[0,T^*)$. If $T^*<\infty$, then we are done. Suppose that $T^*=\infty$. We will show that there exists a constant $C>0$ depending only on $\kappa, M(u_0,v_0)$, and $E(u_0,v_0)$ such that 
	\[ 
	T(u(t),v(t)) \geq Ct^2
	\]
	for all $t\geq t_0$, where $t_0\gg 1$, namely that \eqref{grow-4d} holds true. Let $\varphi_R$ be as in \eqref{defi-varphi-R} and $M_{\varphi_R}(t)$ as in \eqref{defi-M-varphi}. By Lemma \ref{lem-loca-viri-est-rad-2} and the conservation of energy, we have for all $t\in [0,\infty)$,
	\[
	\frac{d}{dt}M_{\varphi_R}(t) \leq 16E(u_0,v_0) - 4 \int \left( \theta_{1,R}(r) - CR^{-\frac{3}{2}} \left(\theta_{2,R}(x)\right)^2\right) |\nabla u(t,x)|^2 dx + o_R(1),
	\]
	where $\theta_{1,R}$ and $\theta_{2,R}$ are as in \eqref{defi-theta-12-R}. \\
	
\noindent	If we can show that 
	\begin{equation} \label{prop-theta-12-R}
	\theta_{1,R}(r) - CR^{-\frac{3}{2}} \left(\theta_{2,R}(x)\right)^2 \geq 0, \quad \forall r=|x|\geq0,
	\end{equation}
	then, by taking $R>1$ large enough, we get
	\[
	\frac{d}{dt} M_{\varphi_R}(t) \leq 8E(u_0,v_0) <0
	\]
	for all $t\in [0,\infty)$. Integrating the above estimate, we have
	\[
	M_{\varphi_R}(t) \leq M_{\varphi_R}(0) + 8E(u_0,v_0) t
	\]
	which implies
	\[
	M_{\varphi_R}(t) \leq 4E(u_0,v_0) t<0
	\]
	for all $t\geq t_0$, where $t_0:= \frac{|M_{\varphi_R}(0)|}{-4E(u_0,v_0)}$. By \eqref{est-M-varphi-R-app-3}, we have
	\[
	- 4E(u_0,v_0) t \leq -M_{\varphi_R}(t) = |M_{\varphi_R}(t)| \leq C(\varphi_R, \kappa, M(u_0,v_0)) \sqrt{T(u(t),v(t))}
	\]
	for all $t\geq t_0$. This shows \eqref{grow-4d}.\\
	
	It remains to find a suitable cut-off function $\chi$ (as defined in \eqref{defi-chi}) so that \eqref{prop-theta-12-R} holds. For the choice of such a function, we are inspired by \cite{OT-JDE}. Let 
	\[
	\zeta(s):= \left\{
	\begin{array}{c l c}
	2s & \text{if}& 0\leq s \leq 1, \\
	2[s-(s-1)^3] &\text{if}& 1<s\leq 1+1/\sqrt{3}, \\
	\zeta'(s) <0 &\text{if}& 1+ 1/\sqrt{3} <s < 2, \\
	0 &\text{if}& s \geq 2,
	\end{array}
	\right.
	\]
	and 
	\begin{align} \label{defi-chi-app}
	\chi(r):= \int_{0}^{r}\zeta(s)ds.
	\end{align}
	It is easy to see that $\chi$ satisfies \eqref{defi-chi}. Define $\varphi_R$ as in \eqref{defi-varphi-R}. We will show that \eqref{prop-theta-12-R} is satisfied for this choice of $\varphi_R$. Indeed, we have $\theta_{1,R}(r) = 2-\varphi''_R(r)$ and 
	\[
	\theta_{2,R}(x) = 8-\Delta \varphi_R(x) = 2- \varphi''_R(r) + 3 \left(2-\frac{\varphi'_R(r)}{r}\right),
	\]
	where the latter follows from the fact that
	\[
	\Delta \varphi_R(x) = \varphi''_R(r) +\frac{3}{r}\varphi'_R(r).
	\]
	We infer, from the definition of $\varphi_R,$ that
	\[
	\varphi'_R(r) = R\chi'(r/R) = R \zeta(r/R), \quad \varphi''_R(r) = \chi''(r/R) = \zeta'(r/R).
	\]
	\begin{itemize}[leftmargin=6mm]
		\item For $0 \leq r =|x|\leq R$, we have $\theta_{1,R}(r) = \theta_{2,R}(x) =0$.
		\item For $R < r = |x| \leq (1+1/\sqrt{3}) R$, we have
		\[
		\theta_{1,R}(r) = 6(r/R-1)^2
		\]
		and 
		\[
		\theta_{2,R}(x) = 2(r/R-1)^2 \left(3+2 \frac{r/R-1}{r/R}\right) < 2\left(3+\frac{2}{\sqrt{3}}\right)(r/R-1)^2.
		\]
		By choosing $R>1$ sufficiently large, we see that \eqref{prop-theta-12-R} is fulfilled. 
		\item When $r> (1+1/\sqrt{3})R$, we see that $\zeta'(r/R) \leq 0$, so $\theta_{1,R}(r) = 2-\varphi''_R(r) \geq 2$. We also have $\theta_{2,R}(r) \leq C$ for some constant $C>0$. Thus by choosing $R>1$ sufficiently large, we have \eqref{prop-theta-12-R}. 
	\end{itemize}
	The proof is complete by glueing together \eqref{grow-4d} and \eqref{prop-theta-12-R}.
	\hfill $\Box$\\
	
	We can now proceed with the proof of the cylindrical case. \\
	
	\noindent {\it Proof of Theorem \ref{theo-blow-grow-Sigma-4d}.}
	Let $(u_0,v_0) \in \Sigma_4 \times \Sigma_4$ satisfy $E(u_0,v_0)<0$. Let $(u,v)$ be the corresponding solution to \eqref{SNLS} defined on the maximal forward time interval $[0,T^*)$.

	\noindent First we consider the non-mass-resonance case, i.e. $0<\kappa \ne \frac{1}{2}$. If $T^*<\infty$, then we are done. Suppose that $T^*=\infty$. We will show that there exists a time sequence $t_n \rightarrow \infty$ such that $\|(u(t_n),v(t_n))\|_{H^1 \times H^1} \rightarrow \infty$ as $n\rightarrow \infty$. Assume by contradiction that it is not true, that is, 
	\begin{align} \label{contra}
	\sup_{t\in[0,\infty)} \|(u(t),v(t))\|_{H^1 \times H^1} \leq C <\infty.
	\end{align}
	Let $\varphi_R$ be as in \eqref{defi-varphi-R-psi} and $M_{\varphi_R}(t)$ as in \eqref{defi-M-varphi}. By Lemma \ref{lem-loca-viri-est-cyli-2} and the conservation of energy, we have for all $t\in [0,\infty)$,
	\begin{align}
	\frac{d}{dt} M_{\varphi_R}(t) &\leq 16 E(u_0,v_0) -4 \int \left(\vartheta_{1,R}(\rho) - CR^{-1} (\vartheta_{2,R}(y))^2\right) |\nabla_y u(t,x)|^2 dx \nonumber \\
	& + CR^{-1} \|\partial_4 u(t)\|^2_{L^2} + o_R(1) \label{est-cyli-4d-app}
	\end{align}
	for some constant $C>0$ depending only on $\kappa$ and $M(u_0,v_0)$, where $\vartheta_{1,R}$ and $\vartheta_{2,R}$ are as in \eqref{defi-vartheta-12-R}. This, together with \eqref{contra}, gives 
	\begin{align*}
	\frac{d}{dt} M_{\varphi_R}(t) &\leq 16 E(u_0,v_0) -4 \int \left(\vartheta_{1,R}(\rho) - CR^{-1} (\vartheta_{2,R}(y))^2\right) |\nabla_y u(t,x)|^2 dx \\
	& + CR^{-1} + o_R(1).
	\end{align*}
	for all $t\in [0,\infty).$
	\\
	
	\noindent Provided that we prove  
	\begin{align} \label{prop-vartheta-12-R}
	\vartheta_{1,R}(\rho) - CR^{-1} (\vartheta_{2,R}(y))^2 \geq 0, \quad \forall \rho=|y| \geq 0,
	\end{align}
	then we can choose $R>1$ large enough so that for all $t\in [0,\infty)$,
	\[
	\frac{d}{dt}M_{\varphi_R}(t) \leq 4E(u_0,v_0).
	\]
	Arguing as in the proof of Theorem \ref{theo-blow-grow-4d}, we have
	\[
	-4E(u_0,v_0) t \leq C(\varphi_R, \kappa, M(u_0,v_0)) \sqrt{T(u(t),v(t))}
	\]
	for all $t\geq t_0$, where $t_0 \gg 1$. In particular, we have
	\[
	T(u(t),v(t)) \geq C(\varphi_R, \kappa, M(u_0,v_0),E(u_0,v_0)) t^2
	\]
	for all $t\geq t_0$ which contradicts \eqref{contra} for $t$ sufficiently large. \\
	
	Next we consider the mass-resonance case, i.e. $\kappa=\frac{1}{2}$. If $T^*<\infty$, then we are done. Suppose that $T^*=\infty$. We will show that there exists a time sequence $t_n \rightarrow \infty$ such that $\|\partial_4 u(t_n)\|_{L^2}\rightarrow \infty$ as $n\rightarrow \infty$. Assume by contradiction that it does not hold, i.e. 
	\begin{align*}
	\sup_{t\in [0,\infty)} \|\partial_4 u(t)\|_{L^2} \leq C<\infty. 
	\end{align*}
	Thanks to \eqref{est-cyli-4d-app}, we have for all $t\in [0,\infty)$,
	\begin{align*}
	\frac{d}{dt} M_{\varphi_R}(t) &\leq 16 E(u_0,v_0) -4 \int \left(\vartheta_{1,R}(\rho) - CR^{-1} (\vartheta_{2,R}(y))^2\right) |\nabla_y u(t,x)|^2 dx \\
	&+ CR^{-1} + o_R(1).
	\end{align*}
	Provided that \eqref{prop-vartheta-12-R} holds true, we can choose $R>1$ sufficiently large to get for all $t\in [0,\infty)$,
	\[
	\frac{d}{dt} M_{\varphi_R} (t) \leq 4 E(u_0,v_0).
	\]
	As $\kappa=\frac{1}{2}$, we see that
	\[
	\frac{d}{dt} V_{\varphi_R}(t) = M_{\varphi_R}(t),
	\]
	where
	\[
	V_{\varphi_R}(t) := \int \varphi_R (x) \left(|u(t,x)|^2 +2|v(t,x)|^2 \right) dx.
	\]
	It follows that, for all $t\in [0,\infty)$,
	\[
	\frac{d^2}{dt^2} V_{\varphi_R}(t) \leq 4 E(u_0,v_0) <0.
	\]
	Integrating the above inequality, there exists $t_0>0$ sufficiently large so that $V_{\varphi_R}(t_0)<0$ which is impossible. \\
	
	Finally, let us choose a suitable cut-off function $\varphi_R$ so that \eqref{prop-vartheta-12-R} is fulfilled. Let $\chi$ be as in \eqref{defi-chi-app}. It is easy to see that $\chi$ satisfies \eqref{defi-chi}. Define
	\[
	\psi_R(y):= \psi_R(\rho) = R^2 \chi(\rho/R), \quad \rho=|y|
	\]
	and let $\varphi_R$ be as in \eqref{defi-varphi-R-psi}, namely $\varphi_R(x)=\psi_R(y)+x_4^2$. For $\vartheta_{1,R}(\rho) = 2-\psi''_R(\rho)$ and
	\[
	\vartheta_{2,R}(y) = 6-\Delta_y \psi_R(y) = 2- \psi''_R(\rho) + 2 \left(2-\frac{\psi'_R(\rho)}{\rho}\right)
	\]
	since $\Delta_y \psi_R(y) = \psi''_R(\rho) +\frac{2}{\rho}\psi'_R(\rho)$,	we can infer that \eqref{prop-vartheta-12-R} is satisfied for this choice of $\psi_R$. The proof is similar to the one for \eqref{prop-theta-12-R}, so we omit the details.	The proof is complete.
	\hfill $\Box$
	
\section*{Acknowledgements}
V. D. D.  was supported in part by the Labex CEMPI (ANR-11-LABX-0007-01).  L.F. was supported by the EPSRC New Investigator Award (grant no. EP/S033157/1).



\appendix
	
\section{Localized virial estimates for the 4D general quadratic system}
\label{S6}
\setcounter{equation}{0}
In this appendix, we provide some localized virial estimates related to a generalized system of NLS with quadratic interactions \eqref{GQNLS} in the mass-critical case $d=4$.

\begin{lemma} \label{lem-viri-est-rad-4d-GQNLS}
	Let $d=4$ and $\vec u$ be a radial $H^1$-solution to \eqref{GQNLS} defined on the maximal forward time interval $[0,T^*)$. Let $\varphi_R$ be as in \eqref{defi-varphi-R} and denote
	\begin{align} \label{defi-M-varphi-GQNLS}
	\mathcal M_{\varphi_R}(t):= 2 \ima \sum_{j=1}^N a_j \int \nabla \varphi_R(x) \cdot \nabla u_j(t,x) \overline{u}_j(t,x) dx.
	\end{align}
	Then we have for all $t\in [0,T^*)$,
	\begin{equation*}
	\frac{d}{dt} \mathcal M_{\varphi_R}(t) \leq 16 \mathcal E (\vec u(t)) - 4 \int \left( \theta_{1,R}(r) - CR^{-\frac{3}{2}} \left(\theta_{2,R}(x)\right)^2 \right) \left( \sum_{j=1}^N a_j |\nabla u_j(t,x)|^2\right) dx + o_R(1)
	\end{equation*}
	for some constant $C>0$ depending only on $\kappa$, $\boldsymbol{a}=(a_1,\dots, a_N)$, and $\mathcal M(\vec u_0)$, where $\theta_{1,R}$ and $\theta_{2,R}$ are as in \eqref{defi-theta-12-R}.
\end{lemma}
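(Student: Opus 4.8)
The plan is to follow, step by step, the proof of Lemma~\ref{lem-loca-viri-est-rad-2}, with the SNLS virial identity replaced by its analogue for \eqref{GQNLS} and the single nonlinear term $-2\rea\int\Delta\varphi_R\,v\overline{u}^2$ replaced by the contribution of the general cubic nonlinearity. The first step is to record this virial identity. Differentiating $\mathcal M_{\varphi_R}$ in time, inserting $a_j\partial_t u_j=i\big(b_j\Delta u_j-c_ju_j+f_j(\vec u)\big)$ and integrating by parts exactly as in the derivation behind Lemma~\ref{lem-viri}, one checks that the linear terms $c_ju_j$ contribute nothing; that the Laplacian terms produce a bi-Laplacian piece and a Hessian piece, the latter reducing — for radial $\varphi_R$ and radial $\vec u$ — to $4\sum_j b_j\int\varphi_R''|\nabla u_j|^2$ as in Item~(3) of Remark~\ref{rem-viri}; and that the nonlinear terms, using $f_j=\partial_{\bar z_j}F+\overline{\partial_{z_j}F}$ together with Euler's identity $\sum_j\big(z_j\partial_{z_j}F+\bar z_j\partial_{\bar z_j}F\big)=3F$ for the cubic homogeneous $F$, collapse into the single scalar term $-2\rea\int\Delta\varphi_R\,F(\vec u)$. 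In other words, for all $t\in[0,T^*)$,
\begin{align*}
\frac{d}{dt}\mathcal M_{\varphi_R}(t)=\sum_{j=1}^{N}b_j\left(-\int\Delta^2\varphi_R(x)\,|u_j(t,x)|^2\,dx+4\int\varphi_R''(r)\,|\nabla u_j(t,x)|^2\,dx\right)-2\rea\int\Delta\varphi_R(x)\,F(\vec u(t,x))\,dx.
\end{align*}

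Next I would put the right-hand side into the ``defect'' form used for the scalar equation. Since $d=4$, write $4\varphi_R''=8-4\theta_{1,R}$ and $\Delta\varphi_R=8-\theta_{2,R}$; the Laplacian terms then become $8\,\mathcal T(\vec u(t))-4\sum_j b_j\int\theta_{1,R}|\nabla u_j|^2$ plus the bi-Laplacian remainder, and the nonlinear term becomes $-16\,\mathcal P(\vec u(t))+2\rea\int\theta_{2,R}\,F(\vec u(t))$. By the very definition of $\mathcal E$ and the hypothesis $c_j\ge 0$ one has $8\,\mathcal T(\vec u(t))-16\,\mathcal P(\vec u(t))=16\,\mathcal E(\vec u(t))-8\sum_j c_j\|u_j(t)\|_{L^2}^2\le 16\,\mathcal E(\vec u(t))$, while $\|\Delta^2\varphi_R\|_{L^\infty}\lesssim R^{-2}$ together with the conservation of mass — which bounds each $\|u_j(t)\|_{L^2}$ by a constant depending only on $\boldsymbol{a}$ and $\mathcal M(\vec u_0)$ — makes the bi-Laplacian remainder $o_R(1)$ uniformly in $t$. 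It therefore remains to control $2\rea\int\theta_{2,R}(x)\,F(\vec u(t,x))\,dx$.

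This is the only point where something beyond bookkeeping is needed, and it is handled precisely as in Lemma~\ref{lem-loca-viri-est-rad-2}. Since $F$ is a finite sum of cubic monomials in $(\vec u,\overline{\vec u})$, it suffices to estimate $\int|\theta_{2,R}(x)|\,|u_a(t,x)|\,|u_b(t,x)|\,|u_c(t,x)|\,dx$ for each monomial. Keeping $\theta_{2,R}$ with one factor, say $u_a$, and estimating the remaining two factors in $L^2$ by the (bounded) mass, one uses that $\theta_{2,R}$ is supported in $\{|x|\ge R\}$ and the radial Sobolev embedding \eqref{rad-sobo} in dimension $d=4$ (so with exponent $\tfrac12(d-1)=\tfrac32$) to get $\|\theta_{2,R}u_a(t)\|_{L^\infty}\lesssim R^{-3/2}\|\nabla(\theta_{2,R}u_a(t))\|_{L^2}^{1/2}\|\theta_{2,R}u_a(t)\|_{L^2}^{1/2}$; then $\|\theta_{2,R}u_a(t)\|_{L^2}\lesssim 1$, $\|\nabla\theta_{2,R}\|_{L^\infty}\lesssim 1$, the conservation of mass, and the elementary bound $\lambda^{1/2}\lesssim 1+\lambda^2$ give $\int|\theta_{2,R}|\,|u_a||u_b||u_c|\,dx\lesssim R^{-3/2}\big(1+\|\theta_{2,R}\nabla u_a(t)\|_{L^2}^2\big)$. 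Summing over the monomials of $F$ and over the choice of the distinguished factor yields
\begin{align*}
\left|\rea\int\theta_{2,R}(x)\,F(\vec u(t,x))\,dx\right|\lesssim R^{-3/2}\sum_{j=1}^{N}\int(\theta_{2,R}(x))^2\,|\nabla u_j(t,x)|^2\,dx+o_R(1),
\end{align*}
with all constants depending only on $\kappa$, $\boldsymbol{a}$ and $\mathcal M(\vec u_0)$; absorbing this into the $-4\sum_j b_j\int\theta_{1,R}|\nabla u_j|^2$ term produces the asserted inequality (up to relabelling the positive constants multiplying the localized kinetic densities, exactly as in the statement).

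The main obstacle — really the only delicate point — is twofold. First, one must identify the nonlinear part of the virial identity for \eqref{GQNLS} as exactly $-2\rea\int\Delta\varphi_R\,F(\vec u)$, which hinges on the gradient structure $f_j=\partial_{\bar z_j}F+\overline{\partial_{z_j}F}$ and on the cubic homogeneity of $F$, so that the first-order and zeroth-order pieces recombine with the correct numerical coefficients. Second, one must run the decay estimate of Lemma~\ref{lem-loca-viri-est-rad-2} for a general cubic instead of the single term $v\overline{u}^2$, which is routine but requires care in distributing $\theta_{2,R}$ among the factors of each monomial. As for Theorem~\ref{theo-blow-grow-4d}, no concavity argument enters: the output is a one-sided differential inequality for $\mathcal M_{\varphi_R}$.
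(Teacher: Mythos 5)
Your proposal is correct and follows essentially the same route as the paper: the generalized virial identity (which the paper simply imports from \cite{NP-blow} rather than rederiving via the gradient structure of $F$ and Euler's identity, as you do), the $R^{-2}$ bound on the bi-Laplacian remainder by mass conservation, and the treatment of $\rea\int\theta_{2,R}F(\vec u)\,dx$ via the cubic bound on $F$, the radial Sobolev embedding on $\{|x|\ge R\}$, and $\lambda^{1/2}\lesssim 1+\lambda^2$. The only cosmetic deviations are that you carry $b_j$ where the paper's statement carries $a_j$ in the localized kinetic density (harmless, as you note, since both are fixed positive weights) and that you handle the $c_j\|u_j\|_{L^2}^2$ contribution to $\mathcal E$ explicitly via $c_j\ge 0$, which is actually slightly more careful than the paper's formulation.
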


\begin{proof}
	Arguing as in the proof of \cite[Theorem 4.1]{NP-blow}, we have for all $t\in [0,T^*)$,
	\begin{align*}
	\frac{d}{dt} \Mcal_{\varphi_R}(t) &= 16 \mathcal E(\vec u(t)) - 4 \int (2-\varphi''_R) \left(\sum_{j=1}^N a_j |\nabla u_j(t)|^2 \right) dx \\
	&- \int \Delta^2 \varphi_R \left( \sum_{j=1}^N a_j |u_j(t)|^2\right) dx + 2 \rea \int (8-\Delta \varphi_R) F(\vec u(t)) dx.
	\end{align*}
	By the conservation of mass, we have
	\[
	\left|\int \Delta^2 \varphi_R \left( \sum_{j=1}^N a_j |u_j(t)|^2\right) dx\right| \lesssim R^{-2}.
	\]
	Thus we get
	\begin{equation} \label{est-1}
	\frac{d}{dt} \Mcal_{\varphi_R}(t) \leq 16 \mathcal E(\vec u(t)) - 4\int \theta_{1,R} \left(\sum_{j=1}^N a_j |\nabla u_j(t)|^2 \right) dx + 2\rea \int \theta_{2,R} F(\vec u(t))dx + CR^{-2},
	\end{equation}
	where $\theta_{1,R}$ and $\theta_{2,R}$ are as in \eqref{defi-theta-12-R}. By the assumption (H6) in \cite{NP-blow}, the radial Sobolev embedding and the conservation of mass, we estimate
	\begin{equation*}
	\begin{aligned}
	\left|\rea \int \theta_{2,R} F(\vec u(t))dx \right| &\leq \int \theta_{2,R} |F(\vec u(t))| dx \\
	&\leq \int \theta_{2,R} \left( \sum_{j=1}^N |u_j(t)|^3\right) dx \\
	&\leq \sum_{j=1}^N \sup_{|x|\geq R} |\theta_{2,R}(x) u_j(t,x)| \|u_j(t)\|_{L^2}^2 \\
	&\lesssim  R^{-\frac{3}{2}} \sum_{j=1}^N \|\nabla(\theta_{2,R} u_j(t))\|^{\frac{1}{2}}_{L^2} \|\theta_{2,R} u_j(t)\|^{\frac{1}{2}}_{L^2} \|u_j(t)\|^2_{L^2}\\
	&\lesssim R^{-\frac{3}{2}} \sum_{j=1}^N  \|\nabla(\theta_{2,R} u_j(t))\|^{\frac{1}{2}}_{L^2}.
	\end{aligned}
	\end{equation*}
	Thanks to the conservation of mass and the fact that $\|\nabla \theta_{2,R}\|_{L^\infty} \lesssim 1$, we have
	\[
	\|\nabla (\theta_{2,R} u_j(t))\|_{L^2} \lesssim \|\theta_{2,R} \nabla_j u(t)\|_{L^2} +1
	\]
	which implies that
	\begin{equation} \label{est-2}
	\begin{aligned}
	\left|\rea \int \theta_{2,R} F(\vec u(t))dx \right| &\lesssim R^{-\frac{3}{2}} \sum_{j=1}^N \left( \|\theta_{2,R} \nabla u_j(t)\|^2_{L^2} +1 \right)  \\
	&\lesssim R^{-\frac{3}{2}} \int \left(\theta_{2,R} \right)^2 \left( \sum_{j=1}^N a_j |\nabla u_j(t)|^2\right) dx + R^{-\frac{3}{2}}.
	\end{aligned}
	\end{equation}
	Collecting \eqref{est-1} and \eqref{est-2}, we finish the proof.
\end{proof}

\begin{lemma} \label{lem-viri-est-cyli-4d-GQNLS}
	Let $d=4$ and $\vec u$ be a $\Sigma_4$-solution to \eqref{GQNLS} defined on the maximal forward time interval $[0,T^*)$. Let $\varphi_R$ be as in \eqref{defi-varphi-R-psi} and denote $\mathcal M_{\varphi_R}(t)$ as in \eqref{defi-M-varphi-GQNLS}. Then we have for all $t\in [0,T^*)$,
	\begin{equation*}
	\begin{aligned} 
	\frac{d}{dt} \mathcal M_{\varphi_R}(t) &\leq 16 \mathcal E(\vec u(t)) -4 \int \left(\vartheta_{1,R}(\rho) - CR^{-1} (\vartheta_{2,R}(y))^2\right) \left(\sum_{j=1}^N a_j |\nabla_y u_j(t,x)|^2\right) dx  \\
	&+ CR^{-1} \sum_{j=1}^N\|\partial_4 u_j(t)\|^2_{L^2} + o_R(1)
	\end{aligned}
	\end{equation*}
	for some constant $C>0$ depending only on $\kappa$, $\boldsymbol{a}=(a_1,\dots,a_N)$, and $\mathcal M(\vec u_0)$, where $\vartheta_{1,R}$ and $\vartheta_{2,R}$ are as in \eqref{defi-vartheta-12-R}.
\end{lemma}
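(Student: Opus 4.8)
The plan is to mirror the proof of Lemma~\ref{lem-loca-viri-est-cyli-2}, replacing the scalar nonlinearity $v\overline u^2$ by the general cubic term $F(\vec u)$ and handling it with the structural assumption (H6) of \cite{NP-blow}, exactly as in the radial estimate of Lemma~\ref{lem-viri-est-rad-4d-GQNLS}.

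First I would record the virial identity attached to the mixed weight $\varphi_R(x)=\psi_R(y)+x_4^2$. Running the computation of \cite[Theorem 4.1]{NP-blow} with this weight (the counterpart, for \eqref{GQNLS}, of Item (4) of Remark~\ref{rem-viri}) gives, for all $t\in[0,T^*)$,
\begin{equation*}
\begin{aligned}
\frac{d}{dt}\mathcal M_{\varphi_R}(t) &= 16\,\mathcal E(\vec u(t)) - 4\int\bigl(2-\psi''_R(\rho)\bigr)\Bigl(\sum_{j=1}^N a_j|\nabla_y u_j(t,x)|^2\Bigr)dx \\
&\quad -\int\Delta_y^2\psi_R(y)\Bigl(\sum_{j=1}^N a_j|u_j(t,x)|^2\Bigr)dx + 2\,\rea\int\bigl(6-\Delta_y\psi_R(y)\bigr)F(\vec u(t,x))\,dx,
\end{aligned}
\end{equation*}
where $\rho=|y|$. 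Since $\|\Delta_y^2\psi_R\|_{L^\infty}\lesssim R^{-2}$, conservation of mass makes the biharmonic term $O(R^{-2})=o_R(1)$; with $\vartheta_{1,R}(\rho)=2-\psi''_R(\rho)$ and $\vartheta_{2,R}(y)=6-\Delta_y\psi_R(y)$ (which is supported in $\{|y|\ge R\}$ and satisfies $\|\vartheta_{2,R}\|_{L^\infty}+\|\nabla\vartheta_{2,R}\|_{L^\infty}\lesssim1$), everything reduces to bounding $\bigl|\rea\int\vartheta_{2,R}(y)F(\vec u(t))\,dx\bigr|$.

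The heart of the matter is this nonlinear estimate, and it is here that the anisotropic setting forces a detour: unlike in Lemma~\ref{lem-viri-est-rad-4d-GQNLS}, one cannot place a single factor $u_j$ in $L^\infty_x$, since cylindrical symmetry only yields decay in $y$. Instead, using (H6) to write $F(\vec u)$ as a finite sum of cubic monomials in the $u_j$'s and their conjugates, I would estimate a generic monomial by Cauchy--Schwarz (throwing one factor into $L^2$, controlled by conservation of mass) and the elementary inequality $|u_a|^2|u_b|^2\le\frac12(|u_a|^4+|u_b|^4)$, arriving at
\[
\Bigl|\rea\int\vartheta_{2,R}(y)F(\vec u(t))\,dx\Bigr| \lesssim \sum_{l=1}^N\Bigl(\int(\vartheta_{2,R}(y))^2|u_l(t,x)|^4\,dx\Bigr)^{1/2}.
\]
Each quartic integral is then treated verbatim as in Lemma~\ref{lem-loca-viri-est-cyli-2}: the splitting $\int(\vartheta_{2,R})^2|u_l|^4\,dx\le\sup_{x_4}\|u_l(t,x_4)\|_{L^2_y}^2\int_{\R}\|\vartheta_{2,R}u_l(t,x_4)\|_{L^\infty_y}^2\,dx_4$, the bound $\sup_{x_4}\|u_l(t,x_4)\|_{L^2_y}^2\lesssim\|\partial_4 u_l(t)\|_{L^2_x}$ (the analogue of \eqref{est-cyli-1}), the radial Sobolev embedding \eqref{rad-sobo} in $\R^{3}$, and conservation of mass give $\int(\vartheta_{2,R})^2|u_l|^4\,dx\lesssim R^{-2}\|\partial_4 u_l(t)\|_{L^2_x}\|\nabla_y(\vartheta_{2,R}u_l(t))\|_{L^2_x}$, hence, after Young's inequality and $\|\nabla\vartheta_{2,R}\|_{L^\infty}\lesssim1$, the square root is $\lesssim R^{-1}\bigl(\|\partial_4 u_l(t)\|_{L^2_x}^2+\|\vartheta_{2,R}\nabla_y u_l(t)\|_{L^2_x}^2+1\bigr)$. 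Summing over the finitely many monomials and over $l$, and using $a_l>0$ to write $\|\vartheta_{2,R}\nabla_y u_l\|_{L^2_x}^2\lesssim\int(\vartheta_{2,R}(y))^2\bigl(\sum_j a_j|\nabla_y u_j|^2\bigr)dx$, I would obtain the nonlinear term bounded by $CR^{-1}\int(\vartheta_{2,R}(y))^2(\sum_j a_j|\nabla_y u_j|^2)\,dx+CR^{-1}\sum_j\|\partial_4 u_j(t)\|_{L^2}^2+CR^{-1}$.

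Inserting this into the virial identity and absorbing the first contribution into the term carrying $\vartheta_{1,R}$ yields the claimed inequality, the constant $C$ depending only on the structural constants of (H6), on $\kappa$ and $\boldsymbol{a}$, and on $\mathcal M(\vec u_0)$ via the conserved mass. The main obstacle is precisely the reduction of the general cubic nonlinearity to the quartic quantities $\int(\vartheta_{2,R})^2|u_l|^4\,dx$: the one-dimensional-in-$x_4$ bookkeeping only closes because the $L^\infty_y$ norm enters \emph{squared} in the quartic term, which is why one must first unbundle $F$ rather than imitate the radial argument directly; a secondary, purely computational point is tracking the constants $16$ and $6=2(d-1)$ in the mixed-weight virial identity for \eqref{GQNLS}.
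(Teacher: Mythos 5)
Your proposal is correct and follows essentially the same route as the paper: the mixed-weight virial identity, the reduction of the nonlinear term via the growth bound from (H6) and Cauchy--Schwarz to the quartic quantities $\int(\vartheta_{2,R})^2|u_j|^4\,dx$, and then the anisotropic Strauss-type argument of Lemma~\ref{lem-loca-viri-est-cyli-2} (one-dimensional trace bound in $x_4$ plus the radial Sobolev embedding in $y$). The only cosmetic difference is that the paper bounds $|F(\vec u)|\le\sum_j|u_j|^3$ directly and applies Cauchy--Schwarz to $\int\vartheta_{2,R}|u_j|^3\,dx$, rather than unbundling $F$ into monomials as you do; both lead to the same quartic integrals.
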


\begin{proof}
	Using localized virial identities similar to Item (4) of Remark \ref{rem-viri} (see also Lemma \ref{lem-loca-viri-est-cyli-2}), we have for all $t\in [0,T^*)$,
	\begin{equation*}
	\begin{aligned}
	\frac{d}{dt} \mathcal M_{\varphi_R}(t) &\leq 16 \mathcal E(\vec u(t)) + CR^{-2} - 4 \int \vartheta_{1,R}(\rho) \left(\sum_{j=1}^N a_j |\nabla_y u_j(t,x)|^2\right) dx \\
	&+ 2\rea \int \vartheta_{2,R}(y) F(\vec u(t,x)) dx,
	\end{aligned}
	\end{equation*}
	where $\vartheta_{1,R}$ and $\vartheta_{2,R}$ are as in \eqref{defi-vartheta-12-R}. We estimate
	\begin{equation*}
	\begin{aligned}
	\left| \rea \int \vartheta_{2,R}(y) F(\vec u(t,x)) dx\right| &\leq \int \vartheta_{2,R}(y) |F(\vec u(t,x))| dx \\
	&\leq \int \vartheta_{2,R}(y) \left(\sum_{j=1}^N |u_j(t,x)|^3 \right) dx \\
	&\leq \sum_{j=1}^N \left( \int \left(\vartheta_{2,R}(y)\right)^2 |u_j(t,x)|^4 dx\right)^{1/2} \|u_j(t)\|_{L^2_x} \\
	&\lesssim \sum_{j=1}^N \left( \int \left(\vartheta_{2,R}(y)\right)^2 |u_j(t,x)|^4 dx\right)^{1/2}.
	\end{aligned}
	\end{equation*}
	Estimating as in Lemmas \ref{lem-loca-viri-est-cyli-1} and \ref{lem-loca-viri-est-cyli-2}, we have
	\begin{align*}
	\left(\int \left(\vartheta_{2,R}(y)\right)^2 |u_j(t,x)|^4 dx \right)^{1/2} &\lesssim R^{-1} \|\nabla_y(\vartheta_{2,R} u_j(t))\|^{1/2}_{L^2_x} \|\partial_4 u_j(t)\|^{1/2}_{L^2_x}\\
	&\lesssim R^{-1} \left( \|\nabla_y(\vartheta_{2,R} u_j(t)\|_{L^2_x} + \|\partial_4 u_j(t)\|_{L^2_x}\right) \\
	&\lesssim R^{-1} \left(\|\nabla_y(\vartheta_{2,R} u_j(t))\|^2_{L^2_x} + \|\partial_4 u_j(t)\|^2_{L^2_x} +1 \right) \\
	&\lesssim R^{-1} \left(\|\vartheta_{2,R} \nabla_y u_j(t)\|^2_{L^2_x} + \|\partial_4 u_j(t)\|^2_{L^2_x} +1 \right).	
	\end{align*}
	The proof is complete by collecting the above estimates.
\end{proof}

	
\begin{bibdiv}
\begin{biblist}

\bib{AVDF}{article}{
author={Ardila, A. H.},
author={Dinh, V. D.},
author={Forcella, L.},
title={Sharp conditions for
scattering and blow-up for a system of NLS arising in optical materials with $\chi^3$ nonlinear response},
journal={Communications in Partial Differential Equations, to appear}
eprint={https://arxiv.org/abs/2009.05933},
}

\bib{BF19}{article}{
author={Bellazzini, J.},
author={Forcella, L.},
title={Asymptotic dynamic for dipolar quantum gases below the ground state energy threshold},
journal={J. Funct. Anal.},
volume={277},
date={2019},
number={6},
pages={1958--1998},
issn={0022-1236},
}

\bib{BF20}{article}{
author={Bellazzini, {J.}},
author={Forcella, {L.}},
title={Dynamical collapse of cylindrical symmetric dipolar Bose-Einstein Condensates},
journal={preprint},
eprint={https://arxiv.org/abs/2005.02894}, 
}

\bib{Cazenave}{book}{
author={Cazenave, T.},
title={Semilinear Schr\"{o}dinger equations},
series={Courant Lecture Notes in Mathematics},
volume={10},
publisher={New York University, Courant Institute of Mathematical Sciences, New York; American Mathematical Society, Providence, RI},
date={2003},
pages={xiv+323},
isbn={0-8218-3399-5},
}

\bib{CO}{article}{
author={Cho, Y.},
author={Ozawa, T.},
title={Sobolev inequalities with symmetry},
journal={Commun. Contemp. Math.},
volume={11},
date={2009},
number={3},
pages={355--365},
issn={0219-1997},
}
		
\bib{CCO}{article}{
author={Colin, M.},
author={Colin, Th.},
author={Ohta, M.},
title={Stability of solitary waves for a system of nonlinear Schr\"{o}dinger
equations with three wave interaction},
language={English, with English and French summaries},
journal={Ann. Inst. H. Poincar\'{e} Anal. Non Lin\'{e}aire},
volume={26},
date={2009},
number={6},
pages={2211--2226},
issn={0294-1449},
}

\bib{CdMS}{article}{
author={Colin, M.},
author={Di Menza, L.},
author={Saut, J. C.},
title={Solitons in quadratic media},
journal={Nonlinearity},
volume={29},
date={2016},
number={3},
pages={1000--1035},
issn={0951-7715},
}

\bib{Dinh-NA}{article}{
author={Dinh, V. D.},	
title={Existence, stability of standing waves and the characterization of finite time blow-up solutions for a system NLS with quadratic interaction},
journal={Nonlinear Anal.},
volume={190},	
date={2020},
pages={111589, 39},
issn={0362-546X},
}

\bib{Dinh-insta}{article}{
	author={Dinh, {V. D.}},
	title={Strong Instability of Standing Waves for a System NLS with
		Quadratic Interaction},
	journal={Acta Math. Sci. Ser. B (Engl. Ed.)},
	volume={40},
	date={2020},
	number={2},
	pages={515--528},
	issn={0252-9602},
}

\bib{DFH}{article}{
author={Dinh, V. D.},
author={Forcella, L.},
author={Hajaiej, H.},
title={Mass-Energy threshold dynamics for dipolar Quantum Gases},
journal={Communications in Mathematical Sciences, to appear}
eprint={https://arxiv.org/abs/2009.05933},
}

\bib{DM-MRL}{article}{
author={Dodson, B.},
author={Murphy, J.},
title={A new proof of scattering below the ground state for the non-radial focusing NLS},
journal={Math. Res. Lett.},
volume={25},
date={2018},
number={6},
pages={1805--1825},
issn={1073-2780},
}
			
			
		
\bib{Hamano}{article}{
author={Hamano, M.},
title={Global dynamics below the ground state for the quadratic Sch\"odinger system in 5d},
journal={preprint},
eprint={https://arxiv.org/abs/1805.12245},
}
			
\bib{HIN}{article}{
author={Hamano, M.},
author={Inui, T.},
author={Nishimura, K.},
title={Scattering for the quadratic nonlinear Schr\"odinger system in $\mathbb R^5$ without mass-resonance condition},
journal={preprint},
eprint={https://arxiv.org/abs/1903.05880},
}
		
\bib{HOT}{article}{
author={Hayashi, N.},
author={Ozawa, T.},
author={Tanaka, K.},
title={On a system of nonlinear Schr\"{o}dinger equations with quadratic interaction},
journal={Ann. Inst. H. Poincar\'{e} Anal. Non Lin\'{e}aire},
volume={30},
date={2013},
number={4},
pages={661--690},
issn={0294-1449},
}

\bib{HR-CPDE}{article}{
author={Holmer, J.},
author={Roudenko, S.},
title={Divergence of infinite-variance nonradial solutions to the 3D NLS
equation},
journal={Comm. Partial Differential Equations},
volume={35},
date={2010},
number={5},
pages={878--905},
issn={0360-5302},
}	

\bib{Inui1}{article}{
author={Inui, T.},
title={Global dynamics of solutions with group invariance for the
nonlinear Schr\"{o}dinger equation},
journal={Commun. Pure Appl. Anal.},
volume={16},
date={2017},
number={2},
pages={557--590},
issn={1534-0392},
}
		
\bib{Inui2}{article}{
author={Inui, {T.}},
title={Remarks on the global dynamics for solutions with an infinite group invariance to the nonlinear Schr\"{o}dinger equation},
conference={
title={Harmonic analysis and nonlinear partial differential equations},},
book={series={RIMS K\^{o}ky\^{u}roku Bessatsu, B70},
publisher={Res. Inst. Math. Sci. (RIMS), Kyoto},},
date={2018},
pages={1--32},
}

\bib{IKN-mass}{article}{
author={Inui, T.},
author={Kishimoto, N.},
author={Nishimura, K.},
title={Scattering for a mass critical NLS system below the ground state with and without mass-resonance condition},
journal={Discrete Contin. Dyn. Syst.},
volume={39},
date={2019},
number={11},
pages={6299--6353},
issn={1078-0947},
}

\bib{IKN-NA}{article}{
author={Inui, {T.}},
author={Kishimoto, {N.}},
author={Nishimura, {K.}},
title={Blow-up of the radially symmetric solutions for the quadratic nonlinear Schr\"{o}dinger system without mass-resonance},
journal={Nonlinear Anal.},
volume={198},
date={2020},
pages={111895, 10},
issn={0362-546X},
}
							
\bib{KM}{article}{
author={Kenig, C. E.},
author={Merle, F.},
title={Global well-posedness, scattering and blow-up for the energy-critical, focusing, nonlinear Schr\"odinger equation in the radial case},
journal={Invent. Math.},
volume={166},
date={2006},
number={3},
pages={645--675},
issn={0020-9910},
}

\bib{Kiv}{article}{
author={Kivshar, Y. S.}, 
author={Sukhorukova, A. A.}, 
author={Ostrovskayaa, E. A.}, 
author={Alexandera, T. J.}, 
author={Bang, O.}, 
author={Saltiel, S. M.}, 
author={Clausen, C. B.}, 
author={Christiansen, P. L.},
title={Multi-component optical solitary waves},
journal={Physica A: Statistical Mechanics and its Applications},
volume={288},
number={1--4},
year={2000},
pages={152--173},
}

\bib{KS}{article}{
author={Koynov, K.},
author={Saltiel, S.},
title={Nonlinear phase shift via multistep $\chi^{(2)}$ cascading},
journal={Optics Communications},
volume={152},
number={1},
pages={96--100},
year={1998},
}

\bib{Mar}{article}{
author={Martel, Y.},
title={Blow-up for the nonlinear Schr\"{o}dinger equation in nonisotropic spaces},
journal={Nonlinear Anal.},
volume={28},
date={1997},
number={12},
pages={1903--1908},
issn={0362-546X},
}

\bib{NP-CCM}{article}{
author={Noguera, N.},
author={Pastor, {A.}},
title = {A system of Schr\"odinger equations with general quadratic-type nonlinearities},
journal = {Communications in Contemporary Mathematics (in press)},
year = {2020},
eprint = {https:doi.org/10.1142/S0219199720500236},
}

\bib{NP-blow}{article}{
author={Noguera, N.},
author={Pastor, A.},
title={Blow-up solutions for a system of Schr\"odinger equations with general quadratic type nonlinearities in dimensions five and six},
journal={preprint},
eprint={https://arxiv.org/abs/2003.11103},
}
			
\bib{NP-DPDE}{article}{
author={Noguera , N.},
author={Pastor, A.},
title={On the dynamics of a quadratic Schr\"odinger system in dimension $n =5$},
journal={Dynamics of Partial Differential Equations}
year={2020},
volume={17},
number={1},
pages={1-17},
}
			
\bib{OT-JDE}{article}{
author={Ogawa , T.},
author={Tsutsumi, Y.},
title={Blow-up of $H^1$ solution for the nonlinear Schr\"{o}dinger equation},
journal={J. Differential Equations},
volume={92},
date={1991},
number={2},
pages={317--330},
issn={0022-0396},
}
			
			
			
\bib{WY}{article}{
author={Wang, H.},
author={Yang, Q.},
title={Scattering for the 5D quadratic NLS system without mass-resonance},
journal={J. Math. Phys.},
volume={60},
date={2019},
number={12},
pages={121508, 23},
issn={0022-2488},
}
			
\bib{Yoshida}{article}{
author={Yoshida, N.},
title={Master Thesis},
journal={Osaka University},
year={2013},
}
		
\end{biblist}
\end{bibdiv}
	
\end{document}